\newtheorem{theorem}{Theorem}[section] 
\newtheorem{definition}[theorem]{Definition} 
\newtheorem{remark}[theorem]{Remark}
\newtheorem{proposition}[theorem]{Proposition}
\newtheorem{corollary}[theorem]{Corollary}
\newtheorem{lemma}[theorem]{Lemma}
\newtheorem{notation}[theorem]{Notation} 
\newtheorem{assump}{Assumption}
\newcommand{\footremember}[2]{%
	\footnote{#2}
	\newcounter{#1}
	\setcounter{#1}{\value{footnote}}%
}
\newcommand{\footrecall}[1]{%
	\footnotemark[\value{#1}]%
} 
\def\blfootnote{\gdef\@thefnmark{}\@footnotetext}
\newcommand{\kNN}{\operatorname{kNN}}
\newcommand{\N}{{\mathbf N}}
\renewcommand{\P}{{\mathbf P}}
\newcommand{\R}{{\mathbb R}}
\newcommand{\ind}{{\mathbf 1}}
\renewcommand\path{\mathfrak{P}}
\newcommand\lbrac{[ \! [}
\newcommand\rbrac{] \! ]}
\newcommand\cA{\mathcal{A}}
\newcommand\cB{\mathcal{B}}
\newcommand\cC{\mathcal{C}}
\newcommand\cF{\mathcal{F}}
\newcommand\cH{\mathcal{H}}
\newcommand\cM{\mathcal{M}}
\newcommand\dR{\mathbb{R}}
\newcommand\bg{\mathbf{g}}
\newcommand\dd{\mathrm{d}}
\newcommand\lapM{\Delta_\cM}
\newcommand{\ABS}[1]{{{\left| #1 \right|}}} 
\newcommand{\SCA}[1]{{{\left<#1\right>}}} 
\newcommand{\NRM}[1]{{{\left\| #1\right\|}}} 
\newcommand{\PAR}[1]{{{\left(#1\right)}}} 
\newcommand{\SBRA}[1]{{{\left[#1\right]}}} 
\renewcommand{\leq}{\leqslant}
\renewcommand{\geq}{\geqslant}
\def\toan#1{\color{red}#1 \color{black}}
\begin{document}
\title{Strong uniform convergence of Laplacians of random geometric and directed kNN graphs on compact manifolds}

\author{H\'el\`ene Gu\'erin \footremember{trailer}{Département de Mathématiques, Université du Québec à Montréal (UQAM), Montréal, QC H2X 3Y7, Canada.}
\and
Dinh-Toan Nguyen \footremember{alley}{LAMA, Univ Gustave Eiffel, Univ Paris Est Creteil, CNRS, F-77454 Marne-la-Vallée, France.} \footrecall{trailer}
	\and
Viet Chi Tran \footrecall{alley} \footremember{crm}{IRL 3457, CRM-CNRS, Université de Montréal, Canada}
}
\date{\today}

\maketitle

\begin{abstract}
Consider $n$ points independently sampled from a density $p$ of class {$\cC^2$} on a smooth compact $d$-dimensional sub-manifold $\cM$ of $\R^m$, and consider the generator of a random walk visiting these points according to a transition kernel $K$. We study the almost sure uniform convergence of this operator to the diffusive Laplace-Beltrami operator when $n$ tends to infinity. This work extends known results of the past 15 years. In particular, our result does not require the kernel $K$ to be continuous, which covers the cases of walks exploring $k$NN-random and geometric graphs, and convergence rates are given. The distance between the random walk generator and the limiting operator is separated into several terms: a statistical term, related to the law of large numbers, is treated with concentration tools and an approximation term that we control with tools from differential geometry. The convergence of $k$NN Laplacians is detailed.
\end{abstract}

\noindent \textit{MSC2020 subject classifications:} 60F05, 05C81, 62G05. \\

\noindent \textit{Key words and phrases:} Laplace-Beltrami operator, {graph Laplacian}, kernel estimator, large sample approximations, Riemannian geometry, concentration inequalities, k-nearest-neighbors.\\

\blfootnote{\textit{Acknowledgements:} The research of H.G. and D.T.N. is part of the Mathematics for Public Health program funded by the Natural Sciences and Engineering Research Council of Canada (NSERC). H.G. is also supported by NSERC discovery grant (RGPIN-2020-07239). D.T.N. and V.C.T. are supported by Labex B\'ezout (ANR-10-LABX-58), GdR GeoSto 3477 and by the European Union (ERC-AdG SINGER-101054787).}


\section{Introduction}

Let $\mathcal{M}$ be a compact smooth $d$-dimensional submanifold without boundary of $\mathbb{R}^m$, {which} we embed with the Riemannian structure induced by the ambient space $\mathbb{R}^m$. Denote by $\| \cdot \|_2$, $\rho(\cdot,\cdot)$ and $\mu(\dd x)$ respectively the Euclidean distance of $\mathbb{R}^m$, the geodesic distance on $\cM$ and the volume measure on $\mathcal{M}$. Let $(X_i, i \in \mathbb{N})$ be a sequence of i.i.d. points in $\mathcal{M}$ sampled from the distribution $p(x)\mu(\mathrm{d}x)$, where $p \in \mathcal{C}^2$ is a continuous function such that $p(x)\mu(\mathrm{d}x)$ defines a probability measure on $\mathcal{M}.$\\
In this article, we study the limit of the random operators $( \mathcal{A}_{h_n,n} , n \in \mathbb{N})$:
\begin{equation}\label{eq:def_Ahn-n}
	\mathcal{A}_{h_n,n}(f)(x):=\frac{1}{nh_n^{d+2}} \sum_{i=1}^nK\left( \frac{ \| x-X_i\|_2}{h_n} \right)(f(X_i)-f(x)), \quad x \in \mathcal{M}
\end{equation}
 where $K: \mathbb{R}_+\rightarrow \mathbb{R}_+$ is a function of bounded variation and $(h_n, n \in \mathbb{N})$ is a sequence of positive real numbers converging to 0.\\
 Such operators can be viewed as the infinitesimal generator of continuous time random walks visiting the points $(X_i)_{i\in \lbrac 1,n\rbrac}$, where $\lbrac 1,n\rbrac = \{1,\dots n\}$. The process jumps from its position $x$ to the new position $X_i$ at a rate $K(\|x-X_i\|_2/h_n)/(nh_n^{d+2})$ that depends on the distance between $x$ and $X_i$. Notice that here, the Euclidean distance is used. When walking on the manifold $\cM$, using the geodesic distance and considering the operator 
 \[\widetilde{\mathcal{A}}_{h_n,n}(f)(x):=\frac{1}{nh_n^{d+2}} \sum_{i=1}^nK\left( \frac{ \rho( x,X_i)}{h_n} \right)(f(X_i)-f(x)), \quad x \in \mathcal{M}\]could be also very natural. In fact, for smooth manifolds, the limits of the two operators $\mathcal{A}_{h_n,n}$ and $\widetilde{A}_{h_n,n}$ are the same, as is indicated by \cite[Prop. 2]{Trillos2020}. In view of applications to manifold learning, when $\cM$ is unknown and when only the sample points $X_i$'s are available, using the norm of the ambient space $\R^m$ can be justified.\\
 The operator \eqref{eq:def_Ahn-n} can also be seen as a
 graph Laplacian for a weighted graph with vertices being data points and their convergence has been studied extensively in machine learning literature to approximate the Laplace-Beltrami operator of $\mathcal{M}$   (see e.g. \cite{Singer2006,Gine2006,Hein2007,Belkin2008,Belkin2009,taoshi2020}).  Nonetheless, most of these results are done for Gaussian kernel, i.e., $K(x)=e^{-x^2}$, or sufficiently smooth kernels. These assumptions are too strong to include the case of $\varepsilon$-geometric graphs or $k$-nearest neighbor graphs (abbreviated as $k$NN), and that correspond to choices of indicators for the kernel $K$. In recent years, many works had been done to relax the regularity of $K$ and gave birth to many interesting papers (e.g. \cite{Calder2022,Ting2010}), as discussed below. \\

 In the sequel, under a mild assumption on $K$ (weaker than continuity, see Assumption \ref{hyp:K} below) and a condition on the rate of convergence of $(h_n)$, we show that  almost surely, the sequence of operators $(\mathcal{A}_{h_n,n})$ converges {uniformly on $\cM$} to the second order differential operator $\mathcal{A}$ on $\mathcal{M}$ defined  as
\begin{equation}
	\label{Equation: differential operator}
	\mathcal{A}(f) := c_0\bigg(  \langle \nabla_{\mathcal{M}}(p) , \nabla_{\mathcal{M}}(f) \rangle + \frac{1}{2}p\Delta_{\mathcal{M}}(f)\bigg),
\end{equation}
for all function $f \in \mathcal{C}^{3}(\mathcal{M})$, where $\nabla_{\mathcal{M}}$ and $\Delta_{\mathcal{M}}$ are respectively the gradient operator and Laplace-Beltrami operator of $\mathcal{M}$ (introduced in Section \ref{Section: Some geometric background}) and \begin{equation}
	c_0:=\frac{1}{d}\int_{\mathbb{R}^d} K \left(\|v\|_2 \right)  \| v\|_2^2 \mathrm{d}v {=\frac{1}{d}S_{d-1} \int_0^{\infty} K(a)a^{d+1} \mathrm{d}a},
\end{equation}where $S_{d-1}$ denotes the volume of the unit sphere of $\R^d$.
Moreover, a convergence rate is also deduced, as stated in our main Theorem below (Theorem \ref{Theorem: main theorem}) that we will present after having enounced the assumptions needed on the kernel $K$:
\begin{assump}\label{hyp:K}
The kernel $K\ :\ \mathbb{R}_+\rightarrow \mathbb{R}_+$ is a measurable function with $K(\infty)=0$ and of bounded variation $H$ such that: 
\begin{equation}
    \int_{0}^{\infty} a^{d+3} \, d H(a) < \infty. \label{Assumption: A1}
\end{equation}
\end{assump}
Recall that the total variation $H$ of a kernel $K$ is defined for each nonnegative number $a$ as $H(a)=\sup \sum_{i=1}^n |K(a_i)-K(a_{i-1})|$, where the supremum ranges over all $n\in \mathbb{N}$ and all subdivisions $0=a_0<\cdots <a_n=a$ of $[0,a]$. Assumption \ref{hyp:K} is the key to avoid making continuity hypotheses on the kernel $K$. 

\begin{theorem}[Main theorem]\label{Theorem: main theorem}
Suppose that the density of points $p(x)$ on the compact smooth manifold $\mathcal{M}$ is of class {$\cC^2$}. 
Suppose that Assumptions \ref{hyp:K} for the kernel $K$ are satisfied and that:
\begin{equation}\label{hyp:h_th11}
    \lim_{n\rightarrow +\infty} h_n =0,\qquad \mbox{ and }\qquad \lim_{n\rightarrow +\infty}\frac{\log h_n^{-1}}{nh_n^{d+2}} =0.
\end{equation}
Then, with probability $1$,  for all  $f \in \mathcal{C}^{3}(\mathcal{M})$,
\begin{equation}
	\sup_{x \in \mathcal{M}} \left|  \mathcal{A}_{h_n,n}(f)(x)- \mathcal{A}(f)(x)\right|= O\left(  \sqrt{ \frac{ \log h_n^{-1}}{nh_n^{d+2}}}  + h_n \right) .\label{eq:cv_th}
\end{equation} 
\end{theorem}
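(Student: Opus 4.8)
The plan is to decompose the error $\mathcal{A}_{h_n,n}(f)(x) - \mathcal{A}(f)(x)$ into a \emph{statistical} part and an \emph{approximation} (bias) part by introducing the deterministic averaged operator
\begin{equation*}
	\bar{\mathcal{A}}_{h_n}(f)(x) := \frac{1}{h_n^{d+2}} \int_{\mathcal{M}} K\!\left( \frac{\|x-y\|_2}{h_n} \right) (f(y)-f(x))\, p(y)\, \mu(\mathrm{d}y) = \E\big[\mathcal{A}_{h_n,n}(f)(x)\big],
\end{equation*}
so that $\mathcal{A}_{h_n,n}(f)(x) - \mathcal{A}(f)(x) = \big(\mathcal{A}_{h_n,n}(f)(x) - \bar{\mathcal{A}}_{h_n}(f)(x)\big) + \big(\bar{\mathcal{A}}_{h_n}(f)(x) - \mathcal{A}(f)(x)\big)$, and then bound each piece uniformly in $x$. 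For the bias term I would work in a geodesic normal chart around each $x$, Taylor-expand $f$ and $p$ to second order, and use the change of variables between the Euclidean norm $\|x-y\|_2$ and the geodesic distance $\rho(x,y)$ (the discrepancy is of order $\rho^3$, which is where the curvature of $\mathcal{M}$ enters); the odd-order terms vanish by the radial symmetry of the integration domain after passing to the tangent space $T_x\mathcal{M} \cong \R^d$, the second-order term reproduces exactly the operator $\mathcal{A}(f)$ with the constant $c_0$, and the remainder — after using Assumption \ref{hyp:K} to integrate $K$ against polynomial weights, the bound $\int_0^\infty a^{d+3}\,dH(a) < \infty$ being exactly what is needed to control the $O(h_n)$ error terms — is $O(h_n)$ uniformly in $x$ since $\mathcal{M}$ is compact and $p \in \mathcal{C}^2$, $f \in \mathcal{C}^3$.

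For the statistical term, fix $x$ and write $\mathcal{A}_{h_n,n}(f)(x) - \bar{\mathcal{A}}_{h_n}(f)(x) = \frac{1}{n}\sum_{i=1}^n \big(Z_i(x) - \E[Z_i(x)]\big)$ where $Z_i(x) = h_n^{-(d+2)} K(\|x-X_i\|_2/h_n)(f(X_i)-f(x))$. The summands are bounded: since $K$ is of bounded variation with $K(\infty)=0$ it is bounded, the indicator of $\{\|x-X_i\|_2 \leq C h_n\}$ (effective support, up to a tail argument using \eqref{Assumption: A1}) kicks in, and $|f(X_i)-f(x)| \lesssim h_n$ on that event, so $|Z_i(x)| \lesssim h_n^{-(d+1)} \cdot h_n = h_n^{-d}$... but more carefully one gets $\|Z_i(x)\|_\infty = O(h_n^{-(d+1)})$ and $\Var(Z_i(x)) = O(h_n^{-(d+2)})$ because the mass concentrates on a ball of radius $\asymp h_n$ of volume $\asymp h_n^d$. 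Bernstein's inequality then gives, for each fixed $x$,
\begin{equation*}
	\P\Big( \big| \mathcal{A}_{h_n,n}(f)(x) - \bar{\mathcal{A}}_{h_n}(f)(x) \big| > t \Big) \leq 2\exp\!\left( - \frac{c\, n h_n^{d+2} t^2}{1 + h_n\, t} \right),
\end{equation*}
and choosing $t \asymp \sqrt{\log h_n^{-1} / (n h_n^{d+2})}$ makes this summable. The remaining difficulty is to upgrade the pointwise estimate to a \emph{uniform} bound over $x \in \mathcal{M}$: I would cover $\mathcal{M}$ by a net of $O(h_n^{-m})$ (or $O(h_n^{-d})$ intrinsically, but a polynomial count suffices) points at scale $\delta_n$ polynomial in $h_n$, apply the union bound, and control the oscillation of $x \mapsto \mathcal{A}_{h_n,n}(f)(x) - \bar{\mathcal{A}}_{h_n}(f)(x)$ between net points. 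Here the lack of continuity of $K$ is the crux: $x \mapsto K(\|x-X_i\|_2/h_n)$ is not Lipschitz, so instead of a modulus-of-continuity argument one controls the \emph{total variation} of this map in $x$, bounding the increment by $H$ evaluated appropriately — this is precisely the role of Assumption \ref{hyp:K} and the bounded-variation hypothesis, and it is the main obstacle in the whole argument. Combining the net cardinality (giving the $\log h_n^{-1}$ factor, since $\log(h_n^{-m}) \asymp \log h_n^{-1}$) with the Bernstein bound and the oscillation control yields $\sup_x | \mathcal{A}_{h_n,n}(f)(x) - \bar{\mathcal{A}}_{h_n}(f)(x) | = O(\sqrt{\log h_n^{-1}/(n h_n^{d+2})})$ with probability $1$ via Borel–Cantelli, under the rate assumption \eqref{hyp:h_th11} which ensures summability. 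Adding the $O(h_n)$ bias bound gives \eqref{eq:cv_th}.

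I expect the bookkeeping of the bias expansion (tracking which curvature and density-gradient terms survive and checking the constant is exactly $c_0$) to be routine but lengthy, and essentially available from prior work (e.g. the normal-coordinate computations behind \cite{Hein2007,Singer2006,Trillos2020}); the genuinely new technical work, and the hardest step, is handling the non-continuous kernel in the uniform (sup over $\mathcal{M}$) concentration step via bounded variation rather than smoothness, together with verifying that the moment condition \eqref{Assumption: A1} is strong enough to absorb all the tail contributions of $K$ at every place polynomial weights $a^{d+j}$ appear.
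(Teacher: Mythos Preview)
Your bias analysis is essentially the paper's: the paper also introduces the deterministic operator $\mathcal{A}_{h_n}$ (your $\bar{\mathcal{A}}_{h_n}$), passes to normal coordinates, Taylor-expands $f$ and $p$, and uses the moment condition $\int a^{d+3}\,dH(a)<\infty$ to control tails, obtaining $O(h_n)$. The only cosmetic difference is that the paper inserts an intermediate operator $\tilde{\mathcal{A}}_{h_n}$ built with the geodesic distance $\rho$ and handles the Euclidean/geodesic discrepancy separately (their Proposition \ref{Theorem: convergence of averaging kerel operators 2}), whereas you fold that into the bias computation; either way works.

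The statistical term is where your proposal diverges from the paper, and where it has a genuine gap. The paper does \emph{not} use Bernstein plus a net. Instead it (i) Taylor-expands $f(X_i)-f(x)$ to reduce to empirical processes indexed by the concrete classes $\{y\mapsto K(\|y-x\|/h)(y^\alpha-x^\alpha):x\in\mathcal{M},h>0\}$ and their second- and third-order analogues, (ii) invokes Nolan--Pollard (Lemma 22 of \cite{nollan1987}) to the effect that \emph{bounded variation of $K$ makes these classes VC-type}, (iii) applies the Gin\'e--Guillou expectation bound (Theorem \ref{gine's version}) to get $\mathbb{E}[\sup_x|\cdots|]=O(\sqrt{\log h_n^{-1}/(nh_n^{d+2})})$, and (iv) upgrades to almost-sure via Talagrand/Massart plus Borel--Cantelli. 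The uniformity in $x$ comes for free from the VC machinery; there is no oscillation step.

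Your net-plus-oscillation route founders precisely at the step you flag as the ``main obstacle''. For a discontinuous $K$ the oscillation $|K(\|x-X_i\|/h_n)-K(\|x'-X_i\|/h_n)|$ is bounded by an $H$-increment over an interval of length $\|x-x'\|/h_n$, but summing this over $i$ and controlling it \emph{uniformly in $x$} requires bounding the number of sample points in thin annuli uniformly over all centers --- which is itself a uniform empirical-process statement of the same type you are trying to prove, so the argument is circular unless you import VC-type tools at that stage anyway. For the indicator kernel $K=\mathbb{1}_{[0,1]}$ this means controlling $\#\{i:X_i\in B(x,h_n)\triangle B(x',h_n)\}$ uniformly in $x$, which is not obtained from pointwise Bernstein plus a crude union bound. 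The paper's route through Nolan--Pollard is exactly the device that turns ``$K$ is BV'' into ``the indexed class has polynomial covering numbers'', sidestepping the oscillation problem entirely.
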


{Notice that the window $h_n$ that optimizes the convergence rate in \eqref{eq:cv_th} is of order $n^{-1/(d+4)},$ up to $\log$ factors,
resulting in a convergence rate in $n^{-1/(4+d)}$. This corresponds to the optimal convergence rate announced in \cite{HeinAudibertvonLuxburg_JMLR}.
}

An important point in the assumptions of Theorem \ref{Theorem: main theorem} is that $K$ is not necessarily continuous nor with mass equal to 1. This can allow to tackle the cases of geometric or kNN graphs for example.\\

This theorem extends the convergence given by Giné and Koltchinskii \cite[Th 5.1]{Gine2006}. They consider the kernel $K(x)=e^{-x^2}$ and control the convergence of the generators uniformly over a class of functions $f$ of class $\cC^3$, uniformly bounded and with uniformly bounded derivatives up to the third order. For such class of functions, the constants in the right hand side (RHS) of \eqref{eq:cv_th} can be made independent of $f$ and we recover a similar uniform bound.\\
The condition \eqref{hyp:h_th11} results from a classical bias-variance trade-off that appears in a similar way in the work of Giné and Koltchinskii \cite{Gine2006}. Notice that the speed $\sqrt{\log h_n^{-1}/(n h_n^{d+2})}$ is also obtained by these authors under the additional assumption that $nh_n^{d+4}/\log h_n^{-1}\rightarrow 0$. We do not make this assumption here. When the additional assumption of Giné and Koltchinskii is satisfied, our rate and their rate coincide as: $h_n^2=o\PAR{\log h_n^{-1}/(nh_n^{d+2})}$.
Hein et al. \cite{HeinAudibertvonLuxburg_COLT2005,HeinAudibertvonLuxburg_JMLR}
extended the results of Giné and Koltchinskii to other kernels $K$, but requesting in particular that these kernels are twice continuously differentiable and with exponential decays (see e.g. \cite[Assumption 2]{HeinAudibertvonLuxburg_COLT2005} or \cite[Assumption 20]{HeinAudibertvonLuxburg_JMLR}). 
Singer \cite{Singer2006}, considering Gaussian kernels, upper bounds the variance term in a
different manner compared to Hein et al., improving their convergence rate when $p$ is the uniform distribution.\\
To our knowledge there are a few works where the consistency of graph Laplacians is proved without continuity assumptions on the kernel $K$. Ting et al. \cite{Ting2010} also worked under the bounded variation assumption on $K$. Additionally, they had to assume that $K$ is compactly supported. In \cite{Calder2022}, Calder and Garcia-Trillos considered a non-increasing kernel with support on $[0,1]$ and Lipschitz continuous on this interval. This choice allows them to consider $K(x)=\ind_{[0,1]}$. Calder and Garc\'ia Trillos established Gaussian concentration of  $\mathcal{A}_{h_n,n}(f)(x)$ and showed that the probability that $|\mathcal{A}_{h_n,n}(f)(x)-\mathcal{A}f(x)|$ exceeds some threshold $\delta$ is exponentially small, of order $\exp(-C\delta^2 nh_n^{d+2})$, when $n\rightarrow +\infty$. In this paper, thanks to the uniform convergence in Theorem \ref{Theorem: main theorem}, we obtain a similar result with additional uniformity on the test functions $f$: 
\begin{corollary}\label{cor:dev_Ahn-A}
Suppose that the density $p$ on the smooth manifold $\cM$ is of class $\cC^2$, and that Assumptions \ref{hyp:K} and \eqref{hyp:h_th11} are satisfied. 
Then there exists a constant $C'>0$ (see \eqref{def:Cprime}), such that for all $n$ and $\delta \in \left[ h_n \vee \sqrt{\frac{\log h_n^{-1}}{nh_n^{d+2}}} ,1 \right]$, we have:
\begin{equation}
\P\PAR{\sup_{f \in \mathcal{F}}\sup_{x \in \mathcal{M}}|\mathcal{A}_{h_n,n}(f)(x)-\mathcal{A}f(x)|>C'\delta}\leq \exp(- nh_n^{d+2} \delta^2),
\end{equation} where $\mathcal{F}$ is the family of $\mathcal{C}^3(\cM)$ functions bounded by 1 and with derivatives up to the third order also bounded by 1. 
\end{corollary}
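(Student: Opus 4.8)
\emph{Sketch of proof.} The plan is to revisit the proof of Theorem~\ref{Theorem: main theorem}, keeping its finite-$n$ deviation bounds rather than passing to the almost sure statement via Borel--Cantelli, and to notice that every constant appearing there depends on the test function only through upper bounds on $\|f\|_\infty$ and on the sup-norms of its derivatives up to order three; since all of these are at most $1$ on $\mathcal{F}$, the estimates become uniform over $\mathcal{F}$. Throughout I would work with the decomposition
\begin{equation*}
\mathcal{A}_{h_n,n}(f)(x)-\mathcal{A}f(x)=\underbrace{\big(\mathcal{A}_{h_n,n}(f)(x)-\E[\mathcal{A}_{h_n,n}(f)(x)]\big)}_{\text{fluctuation}}+\underbrace{\big(\E[\mathcal{A}_{h_n,n}(f)(x)]-\mathcal{A}f(x)\big)}_{\text{bias}}.
\end{equation*}

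For the bias, the differential-geometric computation of the proof of Theorem~\ref{Theorem: main theorem} --- Taylor expansion of $f$ in geodesic normal coordinates at $x$, comparison of the Euclidean and geodesic distances, $\mathcal{C}^2$-regularity of $p$, and Assumption~\ref{hyp:K} used to integrate the kernel against the Taylor remainders --- produces a constant $C_1$ depending only on $\mathcal{M}$, $p$, $K$, $d$ with $\sup_{f\in\mathcal{F}}\sup_{x\in\mathcal{M}}|\E[\mathcal{A}_{h_n,n}(f)(x)]-\mathcal{A}f(x)|\leq C_1 h_n$. On the stated range $\delta\geq h_n$ this is at most $C_1\delta$, deterministically.

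For the fluctuation, set $g_{x,f}(y):=h_n^{-(d+2)}K(\|x-y\|_2/h_n)(f(y)-f(x))$, so that the fluctuation term is the empirical process $(\P_n-\P)g_{x,f}$ indexed by $\mathcal{G}:=\{g_{x,f}:x\in\mathcal{M},\,f\in\mathcal{F}\}$. Because $K$ has bounded variation it is a difference of two bounded monotone functions, whose sublevel sets are Euclidean balls and hence form a VC class; moreover, after a Taylor expansion at scale $h_n$ the map $f\mapsto g_{x,f}$ depends on $f$ only through the finitely many reals $\nabla_{\mathcal{M}}f(x)$ and $\nabla_{\mathcal{M}}^2 f(x)$, up to a remainder that is uniformly of lower order. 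Together these make $\mathcal{G}$ a uniformly bounded VC-type class with envelope $\|g_{x,f}\|_\infty\leq 2\|K\|_\infty h_n^{-(d+2)}$. From the proof of Theorem~\ref{Theorem: main theorem} I would import the variance bound $\sup_{g\in\mathcal{G}}\mathrm{Var}(g(X_1))\leq C_2 h_n^{-(d+2)}$ (a standard kernel-estimator computation using Assumption~\ref{hyp:K}), so that $\mathrm{Var}(\P_n g)\leq C_2/(nh_n^{d+2})$, and the chaining/entropy estimate $\E[\sup_{x,f}|(\P_n-\P)g_{x,f}|]\leq C_3\sqrt{\log h_n^{-1}/(nh_n^{d+2})}$, which is $\leq C_3\delta$ on the range $\delta\geq\sqrt{\log h_n^{-1}/(nh_n^{d+2})}$. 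Talagrand's concentration inequality for suprema of empirical processes (in Bousquet's form), fed with these two inputs, then yields absolute constants $c_1,c_2>0$ such that for all $t>0$
\begin{equation*}
\P\Big(\sup_{x\in\mathcal{M}}\sup_{f\in\mathcal{F}}\big|(\P_n-\P)g_{x,f}\big|\geq C_3\delta+t\Big)\leq\exp\!\big(-nh_n^{d+2}\min\{c_1 t^2,c_2 t\}\big).
\end{equation*}

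To conclude, on the event that the supremum in the corollary exceeds $C'\delta$, the bias contributes at most $C_1\delta$, so the empirical process exceeds $(C'-C_1)\delta$; applying the last display with $t=(C'-C_1-C_3)\delta>0$ and using $\delta\leq 1$, any $C'$ with $C'-C_1-C_3\geq\max\{c_1^{-1/2},c_2^{-1}\}$ makes the exponent at least $nh_n^{d+2}\delta^2$, so that one may take
\begin{equation}\label{def:Cprime}
C':=C_1+C_3+\max\{c_1^{-1/2},c_2^{-1}\}.
\end{equation}
The hard part is none of this bookkeeping but the uniform control of the fluctuation term over the infinite-dimensional class $\mathcal{F}$: a crude sup-norm covering of $\mathcal{F}$ is far too lossy to preserve the $\sqrt{\log h_n^{-1}/(nh_n^{d+2})}$ rate, and one must exploit that at scale $h_n$ the integrand $g_{x,f}$ only sees the first two derivatives of $f$ at the single point $x$, combined with the VC structure furnished by the bounded variation of $K$ --- precisely the mechanism already underpinning the proof of Theorem~\ref{Theorem: main theorem}.
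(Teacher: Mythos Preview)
Your approach matches the paper's: the bias is bounded deterministically by $Ch_n$ via Propositions~\ref{Theorem: convergence of averaging kerel operators} and~\ref{Theorem: convergence of averaging kerel operators 2}, the expectation $\E[Z_n]$ is controlled through the Taylor split of Lemma~\ref{Lemma: a majoring Lemma for Z_n} (which eliminates the sup over $f$) followed by Gin\'e--Guillou on the resulting $f$-free VC classes, and then Talagrand's inequality (Massart's version, Corollary~\ref{Corollary: a corollary of Massart's}, with $t_n=\delta^2 nh_n^{d+2}$) yields the deviation bound. One clarification: the paper never asserts that the full class $\{g_{x,f}:x\in\mathcal{M},\,f\in\mathcal{F}\}$ is VC-type --- the Taylor expansion is performed \emph{first}, replacing the sup over $\mathcal{F}$ by suprema over the $f$-independent classes $\mathcal{H}_n,\mathcal{I}_n,\mathcal{K}_n$, and the third-order remainder contributes an additional deterministic $+h_n$ to the bound on $\E[Z_n]$ that you omitted (harmless on the stated range since $\delta\geq h_n$).
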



{The fact that the convergence in Theorem \ref{Theorem: main theorem} is uniform has several other applications. For example, it can be a step to study the spectral convergence for the graph Laplacian using the Courant-Fisher minmax principle
(see e.g. \cite{Calder2022}). 
 Interestingly, the uniform convergence of the Laplacians is also used to study Gaussian free fields on manifolds \cite{Cipriani2020}.\\
}
 
The result of Theorem \ref{Theorem: main theorem} can be extended to the convergence of $\kNN$ Laplacians in the following way. Recall that for $n, k \in \mathbb{N}$ fixed, such that $k\leq n$, the $k$NN graph on the vertices $\{X_1,\dots X_n\}$ is a graph for which the vertices have out-degree $k$. Each vertex has outgoing edges to its $k$-nearest neighbor for the Euclidean distance (again, the geodesic distance could be considered).\\
For $x\in \cM$, the distance between $x$ and its $k$-nearest neighbor is defined as:
\begin{equation}\label{def:dist-kNN}
    R_{n,k}(x)=\inf\Big\{ r\geq 0,\ \sum_{i=1}^n \ind_{\|x-X_i\|_2\leq r}\geq  k\Big\}.
\end{equation}
The Laplacian of the $k$NN-graph is then, for $x \in \mathcal{M}$,
\begin{equation}
\mathcal{A}_{n}^{\kNN}(f)(x):=\frac{1}{nR_{n,k_n}^{d+2}(x)} \sum_{i=1}^n \ind_{[0,1]}\left(\frac{\|X_i-x\|_2}{R_{n,k_n}(x)}\right) (f(X_i)-f(x)).
\end{equation}
A major difficulty here is that the width of the moving window, $R_{n,k_n}(x)$ is random and depends on $x\in \cM$, contrary to the previous $h_n$. The above expression corresponds to the choice of the kernel $K(x)=\ind_{[0,1]}$. 
{The case of $k$NN has been much discussed in the literature but to our knowledge, there are few works where the consistency of $k$NN graph Laplacians have been fully and rigorously considered, because: 1) of the non-regularity of the kernel $K$ and 2) of the fact that the $k$NN graph is {not symmetric,  more precisely,} the vertex $X_i$ is among the $k$-nearest neighbors of a vertex $X_j$ does not imply that $X_j$ is among the $k$-nearest neighbors of $X_i$. {Ting et al. \cite{Ting2010}  discussed that if there is a kind of Taylor expansion with respect to $x$ of the window $R_{n,k_n}(x)$, one might prove a pointwise convergence for $kNN$ graph Laplacian, without convergence rate}. In the present proof, we do not require such Taylor-like expansion. Let us mention also the work of Calder and Garc\'ia Trillos \cite{Calder2022} where the spectral convergence is established for a \textit{symmetrized} version of the $k$NN graph. In the present work, we do not need that local neighborhoods look like balls and deal with the true $k$NN graph. In other papers such as \cite{chengwu}, \eqref{def:dist-kNN} is considered for defining the window width $h_n$ but the kernel $K$ remains continuous. }\\

We will prove the following limit theorem for the rescaled $k$NN Laplacian:
\begin{theorem}\label{main:theorem-kNN}Under Assumption \ref{hyp:K}, if the density {$p\in \cC^2(\cM)$} is such that for all $x\in \cM$,
\begin{equation}
    \label{hyp:p}
    0<p_{\min}\leq p(x)\leq p_{\max},
\end{equation}and if 
    \begin{equation}
    \label{hyp:k_n}
    \lim_{n\rightarrow +\infty}\frac{k_n}{n}=0,\quad \mbox{ and }\quad \lim_{n\rightarrow +\infty} \frac{1}{n}\PAR{\frac{k_n}{n}}^{-1-2/d} \log \PAR{\frac{k_n}{n}} =0,
\end{equation}we have with {probability $1$},
\begin{equation}\label{eq:rate_kNN}
    \sup_{x \in \mathcal{M}} \left|  \mathcal{A}_{n}^{\kNN}(f)(x)- \mathcal{A}(f)(x)\right|= O\left(  \sqrt{  \log \PAR{\frac{n}{k_n}}}\frac{1}{\sqrt{k_n}} \PAR{\frac{n}{k_n}}^{1/d}  {+} \PAR{\frac{k_n}{n}}^{1/d} \right) .
\end{equation}
\end{theorem}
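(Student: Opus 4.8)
The plan is to deduce Theorem \ref{main:theorem-kNN} from the fixed-window result of Theorem \ref{Theorem: main theorem} by a ``sandwiching'' argument on the random radius $R_{n,k_n}(x)$. The heuristic is that, by the law of large numbers for the binomial count $\sum_i \ind_{\|x-X_i\|_2 \le r}$, the $k$NN radius $R_{n,k_n}(x)$ should be close to the deterministic radius $r_n(x)$ solving $\mu(B(x,r)) \cdot (\text{mean density}) \approx k_n/n$; more precisely, since the volume of a geodesic ball of radius $r$ on $\cM$ behaves like $S_{d-1} r^d/d$ times $p(x)$ to leading order, one expects $R_{n,k_n}(x) \approx \big(\tfrac{k_n}{n}\big)^{1/d} \big(\tfrac{d}{S_{d-1} p(x)}\big)^{1/d}$. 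The first step is therefore to prove a uniform-in-$x$, almost-sure two-sided bound: there exist constants $0 < c_1 \le c_2$ such that, eventually almost surely,
\begin{equation*}
c_1 \PAR{\frac{k_n}{n}}^{1/d} \le R_{n,k_n}(x) \le c_2 \PAR{\frac{k_n}{n}}^{1/d}, \qquad \forall x \in \cM,
\end{equation*}
and in fact the sharper statement that $R_{n,k_n}(x) = \rho_n(x)(1 + o(1))$ uniformly, where $\rho_n(x)$ is the deterministic equivalent above. This is obtained by a union bound / covering argument over a net of $\cM$ together with Bernstein's inequality for the binomial counts, using the condition \eqref{hyp:k_n} on $k_n$ exactly to make the deviation $\sqrt{\log(n/k_n)/k_n}$ small compared to $1$ (so that $k_n \gg \log n$, giving uniform control), and using the $\cC^2$ regularity of $p$ together with $p \ge p_{\min} > 0$ to relate ball volumes to $k_n/n$ with a controlled error of relative size $O\big((k_n/n)^{1/d}\big) = O(\rho_n)$.

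The second step is to rewrite $\mathcal{A}_n^{\kNN}(f)(x)$ in the form of \eqref{eq:def_Ahn-n} with $K = \ind_{[0,1]}$ but with the \emph{random} window $R_{n,k_n}(x)$ in place of $h_n$, and to compare it with $\mathcal{A}_{\rho_n(x),n}(f)(x)$, i.e.\ the same operator evaluated with the deterministic window. There are two sources of error here: (i) the difference between using radius $R_{n,k_n}(x)$ and radius $\rho_n(x)$, which affects both the normalising prefactor $1/(nr^{d+2})$ and the set of indices retained by the indicator; and (ii) the fact that $\rho_n(x)$ depends on $x$. For (i), the prefactor discrepancy is handled by the relative estimate $R_{n,k_n}/\rho_n = 1 + O(\text{dev})$ from Step 1, and the discrepancy in the retained index set is the delicate point — it is controlled by bounding the number of data points $X_i$ falling in the thin annulus $\{\, |\,\|x-X_i\|_2 - \rho_n(x)\,| \le |R_{n,k_n}(x) - \rho_n(x)|\,\}$, again via a Bernstein estimate, and by noting that each such point contributes $O(\rho_n |f|_{\mathrm{Lip}} / (n \rho_n^{d+2})) = O(1/(n\rho_n^{d+1}))$ to the sum since $|f(X_i) - f(x)| = O(\rho_n)$. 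For (ii), one checks that the proof of Theorem \ref{Theorem: main theorem} goes through with a slowly-varying $x$-dependent window (the bias term from differential geometry is local at $x$, and the statistical/concentration term is still uniform because $\rho_n(x) \asymp (k_n/n)^{1/d}$ uniformly); alternatively, one covers $\cM$ by finitely many pieces on which $\rho_n$ is nearly constant and applies Theorem \ref{Theorem: main theorem} on each.

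Assembling the pieces: on the event of Step 1, $\mathcal{A}_n^{\kNN}(f)(x)$ is within $O(\text{dev}_n \cdot \rho_n^{-1}\cdot \text{something})$ — more carefully, within the annulus-count error plus the prefactor error — of $\mathcal{A}_{\rho_n(x),n}(f)(x)$, and the latter is within $O\big(\sqrt{\log \rho_n^{-1}/(n\rho_n^{d+2})} + \rho_n\big)$ of $\mathcal{A}(f)(x)$ by (the windowed version of) Theorem \ref{Theorem: main theorem}. Substituting $\rho_n \asymp (k_n/n)^{1/d}$ gives $n\rho_n^{d+2} \asymp n (k_n/n)^{1+2/d} = k_n (k_n/n)^{2/d}$, so $\sqrt{\log \rho_n^{-1}/(n\rho_n^{d+2})} \asymp \sqrt{\log(n/k_n)}\, k_n^{-1/2} (n/k_n)^{1/d}$ and $\rho_n \asymp (k_n/n)^{1/d}$, which is exactly the claimed rate \eqref{eq:rate_kNN}; one then verifies that the annulus and prefactor errors from Step 2 are of the same or smaller order, and that \eqref{hyp:k_n} is precisely what forces the statistical term to $0$. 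I expect the main obstacle to be Step 2(i), the control of the mismatch between the index sets selected by the random radius and by the deterministic radius: this is where the non-regularity of the kernel $K = \ind_{[0,1]}$ bites hardest, since a small error in the radius can flip the membership of many points near the boundary sphere, and one must show via a concentration bound on annulus counts that this contributes at most at the order of the statistical term — and crucially, that this holds \emph{uniformly in $x \in \cM$}, which again requires a covering argument compatible with the randomness of $R_{n,k_n}(\cdot)$.
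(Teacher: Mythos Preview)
Your plan is workable but takes a more laborious route than the paper's. You share Step~1 with the paper (the uniform concentration $R_{n,k_n}(x)\in[h_n(x)-\gamma_n,\,h_n(x)+\gamma_n]$, which the paper simply imports from Cheng--Wu), but your Step~2 diverges: you want to compare $\mathcal{A}^{\kNN}_n$ to the operator with the \emph{specific} deterministic window $\rho_n(x)$ and then control the mismatch through an annulus-count argument. The paper avoids this entirely by proving, as an intermediate result, a strengthening of Theorem~\ref{Theorem: main theorem} with an additional supremum over window widths:
\[
\sup_{\kappa^{-1}h_n\le r\le \kappa h_n}\ \sup_{x\in\cM}\ \big|\mathcal{A}_{r,n}(f)(x)-\mathcal{A}(f)(x)\big|=O\Big(\sqrt{\tfrac{\log h_n^{-1}}{nh_n^{d+2}}}+h_n\Big).
\]
This comes essentially for free, because the VC classes $\mathcal{G},\mathcal{G}_1,\mathcal{G}_2$ used in the statistical part already index all bandwidths $h>0$, so enlarging $\mathcal{H}_n$ to $\tilde{\mathcal{H}}_n=\{\varphi_{r,y}:y\in\cM,\ \kappa^{-1}h_n\le r\le \kappa h_n\}\subset\mathcal{G}$ does not change the covering-number bounds; the bias terms are handled pointwise in $r$ and are uniform over the range. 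Once this is in hand, on the Cheng--Wu event one has $R_{n,k_n}(x)\in[a_n,b_n]\subset[\kappa^{-1}h_n,\kappa h_n]$ for deterministic $a_n,b_n$ and a fixed $\kappa$ depending only on $p_{\min},p_{\max}$, so
\[
\big|\mathcal{A}^{\kNN}_n(f)(x)-\mathcal{A}(f)(x)\big|\le \sup_{r\in[a_n,b_n]}\big|\mathcal{A}_{r,n}(f)(x)-\mathcal{A}(f)(x)\big|,
\]
and the result follows with $h_n\asymp(k_n/n)^{1/d}$. The point is that taking the supremum over $r$ swallows the random, $x$-dependent window in one stroke, so the ``main obstacle'' you identify --- uniform control of the annulus counts for the discontinuous kernel --- never has to be faced. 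Your approach would ultimately yield the same rate (your back-of-the-envelope for the annulus error does give the correct order once you pass to the deterministic annulus of width $\gamma_n$), but at the cost of an extra uniform-in-$x$ Bernstein/covering argument that the paper's sup-in-$r$ trick renders unnecessary.
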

{This theorem is proved in Section \ref{sec:knn}. Notice that the important point in the Assumption \ref{hyp:p} is the lower bound, since in our case of compact manifold, any continuous function $p$ is bounded.
The condition \eqref{hyp:k_n} and the rate of convergence in \eqref{eq:rate_kNN} come from that fact {that the random distance $R_{n,k_n}(x)$ stays with large probability in an interval $[\kappa^{-1}h_n, \kappa h_n]$ for some $\kappa>1$ independent of $x$ and $n$, and for a sequence $h_n$ independent of $x$. This property is based on a result of Cheng and Wu \cite{chengwu}. The proof of Theorem \ref{main:theorem-kNN} follows the main steps presented in the proof of Theorem \ref{Theorem: main theorem} with some slight modifications.} 

Notice that the assumption \eqref{hyp:k_n} is satisfied for 
\[k_n=C n^{1-\alpha},\quad \mbox{ with } \alpha \in \big(0,\frac{1}{d+2}\big),\]for instance.
Optimizing the upper bound in \eqref{eq:rate_kNN} {by varying $\alpha$ in the above choice} gives:
\[k_n= C n^{\frac{4}{d+4}},\]
yielding again a convergence rate of $O\PAR{\sqrt{\log (n)} \ n^{-1/(d+4)}}$}.\\

The rest of the paper is organized as follows. In Section \ref{section:outline}, we give the scheme of the proof. The term $|  \mathcal{A}_{h_n,n}(f)(x)- \mathcal{A}(f)(x)|$ is separated into a bias error, a variance error and a term corresponding to the convergence of the kernel operator to a diffusion operator. In Section \ref{Section: Some geometric background}, we provide some geometric backgrounds that will be useful for the study of the third term, which is treated in Section \ref{Section: Approximation by a sequence of deterministic operators}. The two first statistical terms are considered in Section \ref{Section: approximations by random operators}, which will end the proof of Theorem \ref{Theorem: main theorem}. Corollary \ref{cor:dev_Ahn-A} is then proved at the end of this section. In Section \ref{sec:knn}, we treat the convergence of $\kNN$ Laplacians: after recalling a concentration result for $R_{n,k_n}(x)$, the proof amounts to considering a uniform convergence over a range of window widths.

\begin{notation}
    In this paper $\text{diam}(\mathcal{M})$, $B_{\R^d}(0,r)$ and $ S_{d-1}$ denote  respectively the diameter of $\cM$, $\max_{z,y \in \mathcal{M}}( \| z-y\|_2)$, the ball of $\R^d$ centered at $0$ with radius $r$ and the volume of the  $(d-1)$-unit sphere of $\mathbb{R}^d$.
\end{notation}

\section{Outline of the proof for Theorem \ref{Theorem: main theorem}}\label{section:outline}
First, we focus on the proof of Theorem \ref{Theorem: main theorem}. Recall that $\rho( \cdot,\cdot)$ denotes the geodesic distance on $\mathcal{M}$ and that $\mu(\dd x)$ is the volume measure on $\mathcal{M}$. We define two new operators $\mathcal{A}_{h},$ $\tilde{\mathcal{A}}_{h}$ for each $h>0, x \in \mathcal{M}, f \in \mathcal{C}^3(\mathcal{M})$: 
\begin{align}
	\mathcal{A}_h(f)(x)	&:= \frac{1}{h^{d+2}}\int_{\mathcal{M}} K \left(\frac{ \|x-y\|_2}{h} \right) ( f(y)-f(x))p(y)\mu(\mathrm{d}y) \label{def:Ah}\\
	\tilde{\mathcal{A}}_h(f)(x)	&:= \frac{1}{h^{d+2}}\int_{\mathcal{M}} K \left(\frac{\rho(x,y)}{h} \right) ( f(y)-f(x))p(y)\mu(\mathrm{d}y).\label{def:Atildeh}
\end{align}
The difference between $\mathcal{A}_h$ and $\tilde{\mathcal{A}}_h$ relies in the use of the extrinsic Euclidean distance $\|\cdot\|_2$ for $\mathcal{A}_h$ and of the intrinsic geodesic distance $\rho(\cdot,\cdot)$ for  $\tilde{\mathcal{A}}_h$. Recall here that these two metrics are comparable for close $x$ and $y$:
\begin{theorem}[Approximation inequality for Riemannian distance]\cite[Prop. 2]{Trillos2020} \label{Theorem: Approximation inequality for Riemannian distanceA}There is a constant $c$ such that for $x,y \in \mathcal{M}$, we have: $$\|x-y\|_2 \le \rho(x,y) \le \|x-y\|_2+ c\|x-y\|_2^3.$$
\hfill $\Box$
\end{theorem}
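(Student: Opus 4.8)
\emph{Proof proposal.} I would prove the two inequalities separately. The lower bound $\|x-y\|_2\le\rho(x,y)$ is immediate: a minimizing geodesic of $\mathcal{M}$ from $x$ to $y$ is in particular a rectifiable path of $\R^m$ joining $x$ and $y$, so its length, which equals $\rho(x,y)$, is at least $\|x-y\|_2$. For the upper bound the plan is to split according to the size of $\|x-y\|_2$: for nearby points I would exhibit an explicit short curve lying on $\mathcal{M}$ and estimate its length, while for far-apart points the bound is trivial once one knows the geodesic diameter of $\mathcal{M}$ is finite (which uses connectedness and compactness of $\mathcal{M}$, as is implicit throughout the paper).

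For close points, the first step is a uniform local graph representation of $\mathcal{M}$: by $\cC^2$-regularity and compactness (equivalently, because a compact $\cC^2$ submanifold of $\R^m$ has positive reach), there are constants $\delta_0>0$ and $A<\infty$, independent of the base point, such that for every $x\in\mathcal{M}$ one can choose orthonormal coordinates of $\R^m$ centred at $x$ with $T_x\mathcal{M}=\R^d\times\{0\}$, in which $\mathcal{M}$ is the graph $\{(u,\varphi_x(u)):u\in B_{\R^d}(0,\delta_0)\}$ of a $\cC^2$ map $\varphi_x$ satisfying $\varphi_x(0)=0$, $D\varphi_x(0)=0$ and $\|D^2\varphi_x\|\le A$; in particular $\|D\varphi_x(u)\|\le A\|u\|$ on that ball. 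Then, for $x,y$ with $\|x-y\|_2<\delta_0$, I would write $y=(w,\varphi_x(w))$ in these coordinates, observe that $\|w\|\le\|x-y\|_2<\delta_0$, and use the curve $\gamma(t):=(tw,\varphi_x(tw))$, $t\in[0,1]$, which stays on $\mathcal{M}$ and joins $x$ to $y$. Its length equals $\int_0^1\sqrt{\|w\|^2+\|D\varphi_x(tw)w\|^2}\,\dd t$, and since $\|D\varphi_x(tw)w\|\le At\|w\|^2$ and $\sqrt{1+s}\le 1+s/2$, this is at most $\|w\|+\tfrac{A^2}{6}\|w\|^3\le\|x-y\|_2+\tfrac{A^2}{6}\|x-y\|_2^3$. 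As $\rho(x,y)$ is bounded by this length, the cubic estimate holds on this range. (The term one might fear to be of order $\|x-y\|_2^2$ is automatically absent precisely because $D\varphi_x(0)=0$: the linear part of $\gamma$ already has length $\|w\|$, and the correction is genuinely cubic.)

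For $\|x-y\|_2\ge\delta_0$ I would simply use $D:=\sup_{z,z'\in\mathcal{M}}\rho(z,z')<\infty$, so that $\rho(x,y)\le D\le\|x-y\|_2+(D/\delta_0^3)\|x-y\|_2^3$; taking $c:=\max\{A^2/6,\,D/\delta_0^3\}$ then yields the inequality for all $x,y\in\mathcal{M}$. The only step requiring genuine care is the uniform graph representation, i.e.\ that $\delta_0$ and $A$ can be chosen simultaneously for all base points; I would obtain this from compactness (a finite subcover of adapted charts, or the positive reach of $\mathcal{M}$) together with the $\cC^2$ bound on the second fundamental form. The remainder is just the triangle inequality and a one-variable integral estimate.
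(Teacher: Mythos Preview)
The paper does not actually prove this theorem: it is quoted verbatim from \cite[Prop.~2]{Trillos2020} and closed with a $\Box$, with no argument given. So there is no ``paper's own proof'' to compare against; you have supplied an independent proof of a result the authors simply import.

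Your argument is correct. The lower bound is the standard observation that geodesics are rectifiable curves in the ambient space. For the upper bound, your local-graph approach is one of the natural proofs: writing $\mathcal{M}$ locally as a graph over $T_x\mathcal{M}$ with $\varphi_x(0)=0$, $D\varphi_x(0)=0$ and a uniform $\cC^2$ bound $\|D^2\varphi_x\|\le A$ is exactly what makes the length correction cubic rather than quadratic, and your integral estimate $\int_0^1\|w\|\sqrt{1+A^2t^2\|w\|^2}\,\dd t\le \|w\|+\tfrac{A^2}{6}\|w\|^3$ is clean. The split into near and far points, with the far case handled by the finite geodesic diameter, is also standard and correct. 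One small point worth making explicit: when $\|x-y\|_2<\delta_0$ you need that $y$ actually lies in the graph chart centred at $x$; this is guaranteed after possibly shrinking $\delta_0$, using either positive reach or a compactness argument over adapted charts, which you do mention. The implicit connectedness assumption (needed for $\rho$ to be finite) is indeed used throughout the paper, so your remark there is appropriate.
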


Let us sketch the proof of Theorem \ref{Theorem: main theorem}. By the classical triangular inequality, 
\begin{align}
    \left|  \mathcal{A}_{h_n,n}(f)(x)- \mathcal{A}(f)(x)\right| \leq & 
    \left|   \mathcal{A}(f)(x)-\widetilde{\mathcal{A}}_{h_n}(f)(x)\right|\nonumber\\
    + & \left| \widetilde{\mathcal{A}}_{h_n}(f)(x)- \mathcal{A}_{h_n}(f)(x)\right|\nonumber\\
    + & \left| \mathcal{A}_{h_n}(f)(x)- \mathcal{A}_{h_n,n}(f)(x)\right|
    \label{etape1}
\end{align}

The first term in the RHS of \eqref{etape1} corresponds to the convergence of kernel-based generator to a continuous diffusion generator on $\mathcal{M}$. The following proposition is proved in Section \ref{sec:Proof_Prop_2.4}:
\begin{proposition}[Convergence of averaging  kernel operators]  	 \label{Theorem: convergence of averaging kerel operators}Under Assumption \ref{hyp:K}, {and if $p$ is of class $\cC^2$}. Then, for all $f \in \mathcal{C}^{3}(\mathcal{M})$, we have:
	$$\sup_{x \in \mathcal{M}} \left| \tilde{\mathcal{A}}_h(f)(x)- \mathcal{A}(f)(x) \right| =O(h).$$
\end{proposition}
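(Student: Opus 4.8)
The plan is the classical route for such ``integral Laplacian'' limits: localize with geodesic normal coordinates, reduce to a Euclidean integral, Taylor expand, and compute the radial moments of $K$; the only genuinely new ingredient is that $K$ is merely of bounded variation, so the contributions of large distances must be controlled through Assumption \ref{hyp:K} rather than through compact support. Concretely, by compactness of $\cM$ its injectivity radius is bounded below by some $r_{\mathrm{inj}}>0$; fix $r_0:=r_{\mathrm{inj}}/2$. For $x\in\cM$ I split $\tilde{\mathcal{A}}_h(f)(x)$ into the contribution of the geodesic ball $B_\rho(x,r_0):=\{ y\in\cM:\rho(x,y)<r_0 \}$ and that of its complement. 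On the complement $\rho(x,y)/h>r_0/h$; since $K$ has bounded variation $H$ with $K(\infty)=0$, one has $|K(a)|\leq \bar H(a):=H(\infty)-H(a)$, a nonincreasing function satisfying $a^{d+2}\bar H(a)\leq\int_{[a,\infty)}t^{d+2}\,\dd H(t)\to 0$ as $a\to\infty$ by \eqref{Assumption: A1}. Hence the complementary term is at most $\tfrac{2\NRM{f}_{\infty}\NRM{p}_{\infty}\mu(\cM)}{h^{d+2}}\,\bar H(r_0/h)=o(h)$, uniformly in $x$.

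On $B_\rho(x,r_0)$ I change variables $y=\exp_x(v)$ with $v\in B_{\R^d}(0,r_0)\subset T_x\cM\cong\R^d$; there $\rho(x,y)=\NRM{v}_2$ \emph{exactly}, and $\mu(\dd y)=\theta_x(v)\,\dd v$ with $\theta_x$ smooth, $\theta_x(0)=1$ and $\theta_x(v)=1+O(\NRM{v}_2^2)$, the $O$ uniform in $x$ because $\cM$ is compact. Writing $\gamma(t)=\exp_x(tv)$ and using that $\gamma$ is a geodesic, $\tfrac{\dd^2}{\dd t^2}f(\gamma(t))$ equals the covariant Hessian $(\nabla\dd f)_{\gamma(t)}(\dot\gamma,\dot\gamma)$, so a third order Taylor expansion at $t=0$ (valid since $f\in\cC^3$, with remainder $O(\NRM{v}_2^3)$ uniform in $x$) gives $f(\exp_x v)-f(x)=\SCA{\nabla_{\mathcal M} f(x),v}+\tfrac12(\nabla\dd f)_x(v,v)+O(\NRM{v}_2^3)$; likewise $p\in\cC^2$ gives $p(\exp_x v)=p(x)+\SCA{\nabla_{\mathcal M} p(x),v}+O(\NRM{v}_2^2)$. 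Multiplying the expansions of $f(\exp_x v)-f(x)$, of $p(\exp_x v)$ and of $\theta_x(v)$, the constant term vanishes, the term linear in $v$ is $p(x)\SCA{\nabla_{\mathcal M} f(x),v}$ --- which integrates to zero against the radial weight $K(\NRM{v}_2/h)$ over the symmetric ball $B_{\R^d}(0,r_0)$ --- the quadratic part is $\tfrac12 p(x)(\nabla\dd f)_x(v,v)+\SCA{\nabla_{\mathcal M} f(x),v}\SCA{\nabla_{\mathcal M} p(x),v}$, and every remaining term is $O(\NRM{v}_2^3)$.

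For the quadratic part I rescale $v=hw$ and use the elementary identity $\int_{\R^d}K(\NRM{w}_2)\,w_iw_j\,\dd w=c_0\,\delta_{ij}$, so that the factor $h^{d+2}$ cancels the prefactor $h^{-(d+2)}$; since the trace of the above quadratic form is $\tfrac12 p(x)\Delta_{\mathcal M} f(x)+\SCA{\nabla_{\mathcal M} p(x),\nabla_{\mathcal M} f(x)}$ (using $\Delta_{\mathcal M} f=\operatorname{tr}(\nabla\dd f)$), the leading term is exactly $c_0\big(\SCA{\nabla_{\mathcal M} p(x),\nabla_{\mathcal M} f(x)}+\tfrac12 p(x)\Delta_{\mathcal M} f(x)\big)=\mathcal{A}(f)(x)$. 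After the same rescaling the $O(\NRM{v}_2^3)$ remainder is $\leq C\,h\int_{\R^d}K(\NRM{w}_2)\NRM{w}_2^3\,\dd w=C\,h\,S_{d-1}\int_0^\infty K(a)\,a^{d+2}\,\dd a$, a finite multiple of $h$ by \eqref{Assumption: A1} (from $K\leq\bar H$ and Fubini, $\int_0^\infty\bar H(a)\,a^{d+2}\,\dd a=\tfrac1{d+3}\int_0^\infty t^{d+3}\,\dd H(t)$), and the error from extending the quadratic integral from $B_{\R^d}(0,r_0)$ to $\R^d$ is $O(h)$ as well. All the constants produced are uniform in $x\in\cM$, so summing the estimates yields $\sup_{x\in\cM}\ABS{\tilde{\mathcal{A}}_h(f)(x)-\mathcal{A}(f)(x)}=O(h)$.

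\emph{Main obstacle.} Compared with the smooth kernel case treated in the literature, the one delicate point is that $K$ is neither continuous nor compactly supported, so each estimate that would ordinarily use compact support of $K$ must be recovered from the bound $|K|\leq\bar H$ together with the single moment hypothesis \eqref{Assumption: A1}; one has to check that this hypothesis supplies exactly the moments needed --- $\int_0^\infty K(a)a^{d+1}\,\dd a<\infty$ for $c_0$ to be well defined and $\int_0^\infty K(a)a^{d+2}\,\dd a<\infty$ for the cubic Taylor remainder to contribute $O(h)$. The rest of the work is the routine but lengthy bookkeeping of the Taylor expansion in normal coordinates --- arranged so that the covariant Hessian, hence $\Delta_{\mathcal M}$ with the correct sign, appears --- and the verification that all remainder constants are uniform in $x$, which rests on compactness of $\cM$ (uniform lower bound on the injectivity radius, uniformly bounded curvature, and $\cC^3$, resp. $\cC^2$, control of $f$ and $p$). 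It is precisely this regularity that bounds the Taylor remainders of $f\circ\exp_x$ by $O(\NRM{v}_2^3)$ and of $p\circ\exp_x$ by $O(\NRM{v}_2^2)$, and hence gives the rate $O(h)$.
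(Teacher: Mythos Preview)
Your argument is correct and follows the same overall strategy as the paper: localize via normal coordinates, Taylor expand, and compute the radial moments of $K$, handling the tail through the bounded-variation hypothesis. One small slip: in the tail estimate you write $a^{d+2}\bar H(a)\le\int_{[a,\infty)}t^{d+2}\,\dd H(t)\to 0$, but this only gives $\tfrac{1}{h^{d+2}}\bar H(r_0/h)=o(1)$; to obtain the claimed $o(h)$ you need the exponent $d+3$, which is exactly what Assumption~\ref{hyp:K} provides (and what the paper uses in \eqref{Equation: rho>c_1 inequality}).

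The one genuine difference in execution is that you Taylor-expand \emph{intrinsically}: you expand $f\circ\exp_x$ and $p\circ\exp_x$ along geodesics, so the covariant Hessian and hence $\Delta_{\mathcal M}f=\operatorname{tr}(\nabla\dd f)$ appear directly. The paper instead extends $f$ and $p$ to $\cC^3$/$\cC^2$ functions on the ambient $\R^m$, Taylor-expands $(f(y)-f(x))p(y)$ in the Euclidean increment $y-x$, and only then pulls back via $\mathcal{E}_x$; this requires the explicit estimates $\|\mathcal{E}_x(v)-x-\mathcal{E}'_x(0)v\|\le c_2\|v\|^2$ of Theorem~\ref{Theorem: Existence of a "good" family of normal coordinate systemsA} and the auxiliary Lemma~\ref{Lemma: Some auxiliary calculations.} to identify $\nabla_{\mathcal M}$ and $\Delta_{\mathcal M}$ from the ambient derivatives. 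Your intrinsic route is cleaner here and avoids those intermediate estimates; the paper's extrinsic route has the advantage that the same ambient Taylor expansion is reused later in Section~\ref{Section: approximations by random operators} for the statistical term, where the embedding into $\R^m$ is essential for the VC arguments.
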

This approximation is based on tools from differential geometry and exploits the assumed regularities on $K$ and $p$. Similar results have been obtained, in particular by \cite[Th. 3.1]{Gine2006} but with continuous assumptions on $K$ that exclude the kNN cases.\\

The second term in \eqref{etape1} corresponds to the approximation of the Euclidean distance by the geodesic distance and is dealt with the following proposition, proved in Section \ref{sec:proof_Prop_2.3}: 
\begin{proposition}	 \label{Theorem: convergence of averaging kerel operators 2}Under Assumption \ref{hyp:K}, {and for a bounded measurable function $p$}, we have, for all $f$ Lipschitz continuous on $\cM$:
	$$\sup_{x \in \mathcal{M}} \left| \mathcal{A}_h(f)(x)- \mathcal{\tilde{A}}_h(f)(x) \right| =O(h).$$
\end{proposition}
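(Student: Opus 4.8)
The plan is to reduce the difference of the two operators to a single integral over $\mathcal{M}$, use the bounded‑variation assumption on $K$ to trade the increment $K(\|x-y\|_2/h)-K(\rho(x,y)/h)$ for an increment of the total‑variation function $H$, swap the order of integration by Tonelli, and then control the resulting family of thin shells using the approximation inequality $\|x-y\|_2\le\rho(x,y)\le\|x-y\|_2+c\|x-y\|_2^3$ of Theorem~\ref{Theorem: Approximation inequality for Riemannian distanceA} together with a uniform volume bound on $\mathcal{M}$. The moment condition \eqref{Assumption: A1} is precisely what makes the final estimate close.

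First I would write, for $x\in\mathcal{M}$,
\[
\mathcal{A}_h(f)(x)-\tilde{\mathcal{A}}_h(f)(x)=\frac{1}{h^{d+2}}\int_{\mathcal{M}}\Big(K\big(\tfrac{\|x-y\|_2}{h}\big)-K\big(\tfrac{\rho(x,y)}{h}\big)\Big)\,(f(y)-f(x))\,p(y)\,\mu(\mathrm{d}y).
\]
Since $f$ is Lipschitz and $\mathcal{M}$ is compact (so the Euclidean and geodesic distances are comparable), $|f(y)-f(x)|\le L\|x-y\|_2$ for a constant $L$, while $|p(y)|\le\|p\|_\infty$. Because $\|x-y\|_2\le\rho(x,y)$ and $H$ is non‑decreasing, the variation bound gives $|K(a)-K(b)|\le H(b)-H(a)$ whenever $a\le b$, hence with $a=\|x-y\|_2/h$ and $b=\rho(x,y)/h$,
\[
\Big|K\big(\tfrac{\|x-y\|_2}{h}\big)-K\big(\tfrac{\rho(x,y)}{h}\big)\Big|\le\int_0^\infty \ind_{\{\|x-y\|_2/h<s\le\rho(x,y)/h\}}\,\mathrm{d}H(s).
\]
Substituting and applying Tonelli's theorem (all integrands are non‑negative) yields
\[
\big|\mathcal{A}_h(f)(x)-\tilde{\mathcal{A}}_h(f)(x)\big|\le\frac{L\|p\|_\infty}{h^{d+2}}\int_0^\infty\Big(\int_{A_s(x)}\|x-y\|_2\,\mu(\mathrm{d}y)\Big)\,\mathrm{d}H(s),
\]
where $A_s(x):=\{y\in\mathcal{M}:\ \|x-y\|_2<sh\le\rho(x,y)\}$.

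Next I would bound $\mu(A_s(x))$ uniformly in $x$ and $s$. On $A_s(x)$ one has $\|x-y\|_2<sh$, so by Theorem~\ref{Theorem: Approximation inequality for Riemannian distanceA}, $sh\le\rho(x,y)\le\|x-y\|_2+c\|x-y\|_2^3<sh+c(sh)^3$; thus $A_s(x)$ is contained in the geodesic shell $\{y:\ sh\le\rho(x,y)<sh+c(sh)^3\}$, which is empty once $sh>\mathrm{diam}(\mathcal{M})$. A uniform volume‑comparison estimate on the compact manifold $\mathcal{M}$ (Bishop–Gromov, using that the Ricci curvature is bounded below) provides a constant $C_{\mathcal{M}}$ with $\mu(\{y:\ a\le\rho(x,y)<b\})\le C_{\mathcal{M}}\,b^{d-1}(b-a)$ for all $x$ and $0\le a\le b\le\mathrm{diam}(\mathcal{M})$; for $sh$ bounded away from $0$ one may instead simply use $\mu(A_s(x))\le\mu(\mathcal{M})$. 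Taking $a=sh$, $b=sh+c(sh)^3$ gives $\mu(A_s(x))\le C'\,(sh)^{d+2}$ with $C'$ depending only on $\mathcal{M}$. Since $\|x-y\|_2<sh$ on $A_s(x)$, the inner integral above is at most $sh\cdot\mu(A_s(x))\le C'(sh)^{d+3}$, and therefore
\[
\sup_{x\in\mathcal{M}}\big|\mathcal{A}_h(f)(x)-\tilde{\mathcal{A}}_h(f)(x)\big|\le L\|p\|_\infty\,C'\,h\int_0^\infty s^{d+3}\,\mathrm{d}H(s),
\]
which is $O(h)$ by Assumption~\ref{hyp:K}, with a constant depending on $\mathcal{M}$, $\|p\|_\infty$, $K$ and the Lipschitz constant of $f$ only.

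The step I expect to be the main obstacle is the volume bound for $A_s(x)$. The tempting move is to replace $A_s(x)$ by the extrinsic shell $\{y:\ sh-c(sh)^3\le\|x-y\|_2<sh\}$, but extrinsic spheres on an embedded manifold can carry $(d-1)$‑volume much larger than $(sh)^{d-1}$ near points whose geodesic distance to $x$ is of order one, so this gives a bound of the wrong order. Passing instead to the intrinsic shell and using a volume‑comparison inequality valid up to the diameter (or splitting into the regimes $sh$ small and $sh$ large, handling the latter via the moment condition) repairs this. Since nothing in the argument depends on $f$ beyond its Lipschitz constant, the same bound holds uniformly over any family of uniformly Lipschitz functions, which will be needed later.
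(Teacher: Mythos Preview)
Your argument is correct and follows the same overall strategy as the paper: bound the kernel increment by $\int \ind_{\{\|x-y\|_2/h<s\le\rho(x,y)/h\}}\,\mathrm{d}H(s)$ using the bounded-variation hypothesis, apply Tonelli, use Theorem~\ref{Theorem: Approximation inequality for Riemannian distanceA} to trap the resulting set in a thin shell of width $c(sh)^3$, and close with the moment condition $\int s^{d+3}\,\mathrm{d}H(s)<\infty$.

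The only substantive difference is how the shell volume is controlled. The paper first truncates to $\rho(x,y)<c_1$ (the far part being $o(h^{d+3})$ by Lemma~\ref{Lemma: An useful estimation}), passes to normal coordinates $v=\mathcal{E}_x^{-1}(y)$ so that $\rho(x,y)=\|v\|_2$, and then computes the shell volume explicitly in $\mathbb{R}^d$ via spherical coordinates, using the bound $\sqrt{\det\widehat g^x_{ij}(v)}\le 1+c_2\|v\|_2^2$ from Theorem~\ref{Theorem: Existence of a "good" family of normal coordinate systemsA}. You instead stay on $\mathcal{M}$ and invoke a Bishop--Gromov type bound $\mu(\{a\le\rho(x,\cdot)<b\})\le C_{\mathcal{M}}\,b^{d-1}(b-a)$. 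Both are valid: for $b$ below the injectivity radius your bound \emph{is} the normal-coordinate computation (with the Jacobian absorbed into $C_{\mathcal{M}}$), and for larger $b$ your splitting into the regimes ``$sh$ small'' vs.\ ``$sh$ bounded away from $0$'' (or using that the sphere-area bound from Bishop's inequality holds globally on a compact manifold) recovers exactly what is needed. The paper's version is more self-contained given the machinery already set up in Section~\ref{Section: Some geometric background}; yours avoids introducing the inverse function $\varphi$ and the change of variables, at the cost of quoting a comparison theorem.
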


For the last term in the RHS of \eqref{etape1}, note that: 
$$\mathbb{E}\left[ \mathcal{A}_{h_n,n}f(x) \right]=\mathcal{A}_{h_n}f(x),$$
because $(X_i, i \in \mathbb{N})$ are i.i.d. This term corresponds to a statistical error. The following proposition will be proved in Section \ref{Section: approximations by random operators} using Vapnik-Chervonenkis theory:

\begin{proposition}\label{Prop:statistical_error} Under Assumption \ref{hyp:K} and for a bounded measurable function $p$, we have, for all $f \in \cC^{3}(\mathcal{M})$,
$$\sup_{x\in \mathcal{M}} \left|  \mathcal{A}_{h_n,n}f(x) -\mathcal{A}_{h_n}f(x) \right| = O\left(  \sqrt{ \frac{ \log h_n^{-1}}{nh_n^{d+2}}} {+} h_n \right) , \text{a.s.}$$
\end{proposition}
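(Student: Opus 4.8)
The plan is to control the statistical error via a uniform (in $x$ and in the window position) concentration argument based on Vapnik--Chervonenkis theory, exploiting the bounded-variation structure of $K$ rather than its continuity. First I would reduce $K$ to an indicator: since $K$ has bounded variation $H$ with $K(\infty)=0$, write $K(a) = \int_a^\infty \dd H(t) = \int_0^\infty \ind_{a \le t}\, \dd H(t)$ (up to a sign convention on $H$), so that
\[
K\PAR{\frac{\|x-X_i\|_2}{h_n}} = \int_0^\infty \ind_{\|x-X_i\|_2 \le t h_n}\, \dd H(t).
\]
Plugging this into \eqref{eq:def_Ahn-n} and into \eqref{def:Ah} and subtracting, the difference $\mathcal{A}_{h_n,n}f(x) - \mathcal{A}_{h_n}f(x)$ becomes $\frac{1}{h_n^{d+2}}\int_0^\infty \big( \Phi_{n,t}(x) - \E[\Phi_{n,t}(x)]\big)\,\dd H(t)$, where $\Phi_{n,t}(x) = \frac1n\sum_{i=1}^n \ind_{\|x-X_i\|_2 \le t h_n}(f(X_i)-f(x))$ is an empirical average over the class of functions $\mathcal{G} = \{ y \mapsto \ind_{\|x-y\|_2 \le r}(f(y)-f(x)) : x \in \mathcal{M},\ r > 0\}$. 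The key structural point is that Euclidean balls in $\R^m$ form a VC class of finite dimension, and multiplying by the fixed bounded function $f(\cdot)-f(x)$ and translating the center does not destroy finite VC dimension; hence $\mathcal{G}$ has a polynomial uniform covering number, uniformly over all $x$ and $r$.

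Next I would apply a uniform deviation inequality (Talagrand / bounded-differences concentration combined with a chaining or symmetrization bound on the expected supremum, in the style of Giné--Koltchinskii \cite{Gine2006}) to the empirical process indexed by $\mathcal{G}$. The functions in $\mathcal{G}$ are bounded by $\mathrm{Lip}(f)\,\mathrm{diam}(\mathcal{M})$ and have variance $O(r^d)$ near $r \sim h_n$ because $p$ is bounded and the volume of a geodesic ball of radius $r$ is $O(r^d)$; this localized variance control is what produces the sharp $\sqrt{\log h_n^{-1}/(n h_n^{d+2})}$ rate rather than the cruder $1/\sqrt{n h_n^{d+2}}$. Concretely, one gets $\sup_x |\Phi_{n,t}(x) - \E[\Phi_{n,t}(x)]| = O\big( t^{d/2} h_n^{d/2} \sqrt{\log h_n^{-1}/n} + t^{?}\log h_n^{-1}/n \big)$ with high probability, where the $\log h_n^{-1}$ comes from the metric entropy of $\mathcal{M}$ (a net of $\mathcal{M}$ at scale a power of $h_n$ has $O(h_n^{-d})$ points) together with the union bound over the VC class. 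Dividing by $h_n^{d+2}$ and integrating $\dd H(t)$ against the weight $t^{d/2}$ — which converges by Assumption \ref{hyp:K}, since $\int_0^\infty t^{d+3}\,\dd H(t) < \infty$ certainly dominates $\int_0^\infty t^{d/2}\,\dd H(t)$ up to truncation at large $t$ (and the tail at large $t$ is handled by the full moment $\int a^{d+3}\,\dd H$) — yields the claimed $O\big(\sqrt{\log h_n^{-1}/(n h_n^{d+2})}\big)$ bound in probability.

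Finally, I would upgrade the in-probability bound to an almost-sure statement by a Borel--Cantelli argument: the concentration inequality gives a probability of order $\exp(-c \log h_n^{-1}) = h_n^c$ of exceeding a constant multiple of the rate, and under \eqref{hyp:h_th11} the sequence $h_n \to 0$ can be taken (after passing to a subsequence and interpolating, or directly if $h_n$ decays polynomially) so that $\sum_n h_n^c < \infty$; a standard monotonicity/interpolation argument between consecutive indices then covers all $n$. The quantity $h_n$ appearing additively in the statement is a lower-order remainder absorbing the secondary $\log h_n^{-1}/(nh_n^{d+2})$ term (which is $o(h_n)$ precisely when $\log h_n^{-1}/(nh_n^{d+2}) = o(h_n^2)$) and any $O(h_n^2)$-type Taylor remainders from writing $f(X_i)-f(x)$; under \eqref{hyp:h_th11} alone one keeps it as a separate summand.

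\textbf{Main obstacle.} The delicate part is the \emph{uniformity in the continuum of window widths} $t h_n$, $t \in (0,\infty)$, simultaneously with uniformity in $x \in \mathcal{M}$: one must integrate deviation bounds against the measure $\dd H(t)$, which forces the variance/entropy estimates to be \emph{explicit in $t$} and then requires the moment condition \eqref{Assumption: A1} to make the integral finite (with care near $t = 0$, where $\ind_{\|x-y\|_2 \le t h_n}$ selects very few points and the Bernstein regime, not the Gaussian regime, dominates, and near $t=\infty$, where the ball eventually covers all of $\mathcal{M}$). Getting the \emph{localized} variance factor $t^{d/2} h_n^{d/2}$ (rather than a trivial $O(1)$) is what makes the rate match Giné--Koltchinskii, and it is the technically demanding computation of the proof.
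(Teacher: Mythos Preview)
Your approach is genuinely different from the paper's. The paper does \emph{not} decompose $K$ into indicator layers via $\dd H$; instead it invokes directly that for $K$ of bounded variation the class $\{y\mapsto K(\|y-x\|_2/h):x\in\cM,\ h>0\}$ is VC-type with respect to a constant envelope (Nolan's lemma, \cite[Lemma~22]{nollan1987}), so no superposition over a scale parameter $t$ is needed at all. The paper then Taylor-expands $f(X_i)-f(x)$ up to third order, which reduces the empirical process to a finite sum of processes indexed only by $x\in\cM$, with integrands $K(\|X_i-x\|_2/h_n)(X_i^\alpha-x^\alpha)$, $K(\|X_i-x\|_2/h_n)(X_i^\alpha-x^\alpha)(X_i^\beta-x^\beta)$ and $K(\|X_i-x\|_2/h_n)\|X_i-x\|_2^3$. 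For these classes the second moment is bounded by Lemma~\ref{Lemma: An useful estimation}~\eqref{Equation: an useful estimation 4} as $O(h_n^{d+2})$, and the Gin\'e--Guillou expectation bound (Theorem~\ref{gine's version}) followed by Talagrand's inequality with $t_n=2\log n$ and Borel--Cantelli finishes the proof. Your layer-cake route buys a more elementary VC input (only Euclidean balls) at the cost of having to integrate the deviation over $t$; the paper's route buys a one-shot VC bound at the cost of citing a less standard VC lemma and performing the Taylor decomposition.

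There is, however, a concrete gap in your rate computation. You bound the variance of $y\mapsto\ind_{\|x-y\|_2\le r}(f(y)-f(x))$ by $O(r^d)$, using only the uniform bound $|f(y)-f(x)|\le\mathrm{Lip}(f)\,\mathrm{diam}(\cM)$ and the ball volume. That is too weak: dividing your resulting deviation $t^{d/2}h_n^{d/2}\sqrt{\log h_n^{-1}/n}$ by $h_n^{d+2}$ gives $t^{d/2}\sqrt{\log h_n^{-1}/(nh_n^{d+4})}$, which is a full factor $h_n^{-1}$ away from the target rate --- your claimed conclusion does not follow from your estimate. The missing ingredient is that on the event $\{\|y-x\|_2\le r\}$ one has $|f(y)-f(x)|\le\mathrm{Lip}(f)\,r$, so the second moment is in fact $O(r^{d+2})$; with this correction the deviation becomes $t^{(d+2)/2}h_n^{(d+2)/2}\sqrt{\log h_n^{-1}/n}$ and the rate is right after integrating $\int t^{(d+2)/2}\,\dd H(t)<\infty$. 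In the paper this extra factor of $\|X-x\|_2^2$ is precisely what the Taylor expansion supplies and is why Step~I is there. Two smaller issues also need tightening: (i) the identity $K(a)=\int_a^\infty\dd H(t)$ is false for non-monotone $K$ --- you need the signed measure $\dd K$, with $|\dd K|\le\dd H$, to write the layer-cake; and (ii) a deviation bound that is simultaneously uniform in $x$ and \emph{explicit in $t$} is not what a single application of Talagrand over the full class $\{(x,r)\}$ gives --- you would need a peeling over dyadic shells $t\in[2^k,2^{k+1}]$ with the localized variance at each level, which is more work than your sketch indicates.
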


It is worth noticing that there is an interplay between Euclidean and Riemannian distances. On the one hand, the Vapnik-Chervonenkis  theory is extensively studied for Euclidean distances, not for Riemannian distance. On the other hand, approximations on manifolds naturally use local coordinate representations for which the Riemannian distance is well adapted.\\

\section{Some geometric backgrounds}\label{Section: Some geometric background}

\subsection{Riemannian manifold}

Let us recall some facts from differential geometry that will be useful. We refer the reader to \cite{Chavel,Lee2018} for a more rigorous introduction to Riemannian geometry. Let $\mathcal{M}$ be a smooth $d$-dimensional submanifold of $\R^m$.


At each point $x$ of $\mathcal{M}$, there is a tangent vector space $T_x\mathcal{M}$ that contains all the tangent vectors of $\mathcal{M}$ at $x$. The tangent bundle of $\cM$ is denoted by { $T\cM=\sqcup_{x\in \cM}T_x\cM$. For each $x \in \mathcal{M}$, the canonical scalar product $\langle \cdot , \cdot \rangle_{\mathbb{R}^m}$ of $\mathbb{R}^m$ induces a natural scalar product on  $T_x\mathcal{M}$, denoted by $\mathbf{g}(x)$. The application $\mathbf{g}$, which associates each point $x$ with a scalar product on $T_x\cM$, is then called the Riemannian metric on $\cM$ induced by the ambient space $\mathbb{R}^m$.} For $\xi,\eta\in T_x\cM$, we use the classical notation { $\langle \xi,\eta\rangle_{\mathbf{g}}$ to denote the scalar product of $\xi$ and $\eta$ w.r.t to the scalar product $\mathbf{g}(x)$.  }\\

Consider a coordinate chart $\Phi=(x^1,\dots x^d): U \rightarrow \mathbb{R}^d$ on a neighborhood $U$ of $x$. 
Denoting by $\left\{ \left.\frac{\partial}{\partial x^1} \right|_x, \left.\frac{\partial}{\partial x^2} \right|_x,..., \left.\frac{\partial}{\partial x^d} \right|_x\right\}$ the natural basis of $T_x\mathcal{M}$ associated with the coordinates $(x^1,\dots x^d)$. Then, the scalar product $\mathbf{g}(x)$ is associated to a matrix $(g_{ij})_{i,j\in \lbrac 1, d\rbrac}$ {in the sense that}
in this coordinate chart, for $\xi$ and $\eta\in T_x\cM$,
\begin{equation}\langle \xi,\eta\rangle_{\mathbf{g}} = \sum_{i,j=1}^d g_{ij}(x) \xi^i\eta^j,\end{equation}
{where $(\xi^i),(\eta^j) $ are the coordinates of $\xi$ and $\eta$ in the above basis of $T_x\cM$.}
Notice that, for each $i,j \in [\![1,d]\!]$
\begin{equation}
	g_{ij}(x):=\left\langle \left.\frac{\partial}{\partial x^i} \right|_x, \left.\frac{\partial}{\partial x^j} \right|_x \right\rangle_{\mathbf{g}},
\end{equation}
and $g_{ij}:U \subset M\rightarrow \mathbb{R}$ is smooth. For a real function $f$ on $\cM$, we will denote
$\widehat{f}$ its expression in the local chart: $\widehat{f}=f\circ \Phi^{-1}$. Recall that the derivative $\frac{\partial f}{\partial x^j}$ is defined as: 
\[\frac{\partial f}{\partial x^j}:=\frac{\partial \widehat{f}}{\partial x^j}\circ \Phi.\]
Also we denote
\begin{equation}
    \widehat{g}_{ij}:= g_{ij}\circ \Phi^{-1},
\end{equation}
which will be called the coordinate representation of the Riemannian metric in the local chart $\Phi$.  \\

\par Charts (from $U\subset \cM\to \R^d$)  induce local parameterizations of the manifold (from $\R^d \to U\subset \cM$). Among all possible local coordinate systems of a neighborhood of $x$ in $\cM$, there are \textit{normal coordinate charts} (see \cite[p. 131-132]{Lee2018} or the remark below for a definition). We denote by $\mathcal{E}_x$ the Riemannian normal parameterization at $x$, i.e., $\mathcal{E}^{-1}_x$ is the corresponding normal coordinate chart.

\begin{remark}[Construction of $\mathcal{E}_x^{-1}$]\label{rque:normal_chart_param}For the sake of completeness, we briefly recall the construction of \cite{Lee2018}.
Let $U$ be an open subset of $\cM$. There exists a local orthonormal frame $(E_i)_{i\in \lbrac 1, d\rbrac}$  over $U$, see \cite[Prop. 2.8, p. 14]{Lee2018}.
The tangent bundle $TU$ can be identified with $U\times \R^d$ thanks to the smooth map:
\begin{equation}\label{eq:def_F}F \ : \ \begin{array}{ccc}
U\times \R^d & \rightarrow  & TU \\
(x,(v_1,\dots v_d)) & \mapsto  & v=\sum_{i=1}^d v_i E_i|_x.
\end{array}\end{equation}
 So for each $x\in U$, $F(x,\cdot)$ is an isometry between $\R^d$ and $T_x\cM$. \\
 Recall that by \cite[Prop 5.19, p. 128]{Lee2018}, the exponential map $\exp(\cdot)$ of $\cM$ can be defined on a non-empty open subset $W$ of $T\cM$ such that $\forall x \in \cM$, $\overrightarrow{0}_x \in W $, where $\overrightarrow{0}_x$ is the zero element of $T_x\cM$. Then, the map $\exp\circ F : (x,v) \mapsto \mathcal{E}_x(v):=\exp\circ\, F (x,(v_1,\dots, v_d))$ is well-defined on $F^{-1}(W \cap TU )$ and $\mathcal{E}_x^{-1}$ is a Riemannian normal coordinate chart at $x\in U$, smooth with respect to $x$.
 \end{remark}

Let us state some properties of the normal coordinate charts.
 
\begin{theorem}[Derivatives of Riemannian metrics in normal coordinate charts]\label{Theorem: derivatives of Riemannian metrics in normal coordinate chartsA}\cite[Prop. 5.24]{Lee2018} For $x\in \cM$, let $\mathcal{E}_x^{-1} : U\subset \mathcal{M}  \rightarrow \mathbb{R}^d $ be a normal coordinate chart at a point $x$ such that $\mathcal{E}_x^{-1}(x)=0$ and let $( \widehat{g}_{ij}; 1 \le i, j \le d)$ be the coordinate representation of the Riemannian metric of $\mathcal{M}$ in the local chart $\mathcal{E}_x^{-1}$. Then for all $i,j$, 
	\begin{equation}
		\widehat{g}_{ij}(0)=\delta_{ij}, \quad \widehat{g}'_{ij}(0)=0, 
	\end{equation}where $\delta_{ij}$ is the Kronecker delta.
Additionally, for all $y \in U$,
\begin{equation}
\rho(x,y)= \|  \mathcal{E}_x^{-1}(y)\|_2.\label{eq:rho=normePhi}
\end{equation}
\end{theorem}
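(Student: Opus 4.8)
The plan is to work throughout in the normal coordinate chart $\mathcal{E}_x^{-1}$ at a fixed $x\in\cM$, using only the defining property of the exponential map. First I would record the behaviour at the centre. By the construction recalled in Remark~\ref{rque:normal_chart_param}, $\mathcal{E}_x=\exp\circ F(x,\cdot)$ with $F(x,\cdot)\colon\R^d\to T_x\cM$ a linear isometry onto $(T_x\cM,\mathbf g(x))$, and $d(\exp_x)_{\overrightarrow{0}_x}=\Id_{T_x\cM}$; hence $d(\mathcal{E}_x)_0=F(x,\cdot)$, so the coordinate vectors $\partial/\partial x^i|_x$ are precisely the orthonormal vectors $E_i|_x$. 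This immediately gives $\widehat g_{ij}(0)=\langle E_i|_x,E_j|_x\rangle_{\mathbf g}=\delta_{ij}$, and in particular $\widehat g^{kl}(0)=\delta^{kl}$ for the inverse metric.

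Next I would use that, in these coordinates, the geodesics through $x$ are exactly the straight lines through the origin: for every $v\in\R^d$ the curve $t\mapsto\mathcal{E}_x(tv)=\exp_x(tF(x,v))$ is the geodesic with initial velocity $F(x,v)$. Writing the geodesic equation in the chart, $\ddot\gamma^k+\widehat\Gamma^k_{ij}(\gamma)\dot\gamma^i\dot\gamma^j=0$, for $\gamma(t)=tv$ gives $\widehat\Gamma^k_{ij}(tv)\,v^iv^j=0$; evaluating at $t=0$ and polarising in $v$ (using that the Levi-Civita connection is torsion-free, so $\widehat\Gamma^k_{ij}=\widehat\Gamma^k_{ji}$) yields $\widehat\Gamma^k_{ij}(0)=0$ for all $i,j,k$. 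Plugging this into the standard expression $\widehat\Gamma^k_{ij}=\tfrac12\widehat g^{kl}\bigl(\partial_i\widehat g_{jl}+\partial_j\widehat g_{il}-\partial_l\widehat g_{ij}\bigr)$ and using $\widehat g^{kl}(0)=\delta^{kl}$, I obtain $\partial_i\widehat g_{jk}(0)+\partial_j\widehat g_{ik}(0)-\partial_k\widehat g_{ij}(0)=0$ for all $i,j,k$; combining this with the relation obtained by permuting the indices and cancelling via $\widehat g_{ij}=\widehat g_{ji}$ leaves $\partial_k\widehat g_{ij}(0)=0$, i.e.\ $\widehat g'_{ij}(0)=0$.

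For the distance identity \eqref{eq:rho=normePhi}, fix $y\in U$ and put $v:=\mathcal{E}_x^{-1}(y)$, so $y=\exp_x(F(x,v))$. The radial geodesic $t\in[0,1]\mapsto\exp_x(tF(x,v))$ joins $x$ to $y$ with constant speed $\|F(x,v)\|_{\mathbf g}=\|v\|_2$, hence has length $\|v\|_2$, giving $\rho(x,y)\leq\|v\|_2$. For the reverse inequality I would invoke the Gauss lemma: in a normal neighbourhood the radial geodesics are orthogonal to the geodesic spheres $\{z:\|\mathcal{E}_x^{-1}(z)\|_2=r\}$, which forces any competing path from $x$ to $y$ to have length at least $\|v\|_2$; hence $\rho(x,y)=\|v\|_2=\|\mathcal{E}_x^{-1}(y)\|_2$. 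One only needs $U$ to be small enough to be a geodesic ball, which the construction of $\mathcal{E}_x$ in Remark~\ref{rque:normal_chart_param} provides.

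The one genuinely non-formal ingredient is this last step — the Gauss lemma and the ensuing minimality of radial geodesics inside a normal neighbourhood; everything else is bookkeeping with the geodesic equation and with the formula for the Christoffel symbols in terms of the metric. Since all of this is textbook Riemannian geometry, in the write-up it is enough to cite \cite[Prop.~5.24]{Lee2018} for the first two assertions together with the corresponding statement on geodesic balls in \cite{Lee2018} for \eqref{eq:rho=normePhi}.
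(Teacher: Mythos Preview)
Your argument is correct and is precisely the standard proof of this fact; note, however, that the paper does not supply its own proof of this theorem at all --- it simply quotes \cite[Prop.~5.24]{Lee2018} --- so there is nothing to compare against beyond the textbook reference. Your final paragraph already anticipates this: in the paper's context a citation suffices, and what you wrote is exactly the content behind that citation.
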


\begin{notation} For any function $f: \mathbb{R}^{d} \rightarrow \mathbb{R}^{k}$, we denote by $f' : \mathbb{R}^{d} \rightarrow \mathbb{R}^{k}$ the linear map that represents the first order derivative of $f$. Similarly, we denote respectively by $f'' : \mathbb{R}^{d}\times \mathbb{R}^d \rightarrow \mathbb{R}^{k}$ and $f''': \mathbb{R}^{d}\times\mathbb{R}^{d}\times \mathbb{R}^{d} \rightarrow \mathbb{R}^{k}$ the bi-linear map and the tri-linear map that represent the second order derivative and the third order derivative of $f$. Thus, the Taylor's expansion of $f$ up to third order can be written as $$f(x+v)= f(x) +f'(x)(v)+\frac{1}{2}f''(x)(v,v)+\frac{1}{6}f'''(x+\varepsilon v)(v,v,v),$$
for some $\varepsilon \in (0,1)$.
\end{notation}

For the normal parameterizations $\mathcal{E}_x$, we now state some uniform controls that are keys for our computations in the sequel.

\begin{theorem}[Existence of a "good" family of parameterizations.] \label{Theorem: Existence of a "good" family of normal coordinate systemsA}
	There exist constants $c_1,c_2>0$ and a family $(\mathcal{E}_x, x \in \mathcal{M})$ of smooth local parameterizations of $\mathcal{M}$ which have the same domain $B_{ \mathbb{R}^d}(0,c_1)$ such that for all $x \in \mathcal{M}$,
	\begin{itemize}
		\item[i.]  $\mathcal{E}_x^{-1}$ is a normal coordinate chart of $\mathcal{M}$ and $\mathcal{E}_x(0)=x$.
		\item[ii.] For $v\in B_{ \mathbb{R}^d}(0,c_1)$, we denote by $( \widehat{g}^{x}_{ij}(v); 1 \le i, j \le d)$ the coordinate representation of the Riemannian metric $\bg(\mathcal{E}_x(v))$ of $\mathcal{M}$ in the local parameterization $\mathcal{E}_x$. Then for all $v\in  B_{ \mathbb{R}^d}(0,c_1)$:
		\begin{equation}\label{eq:maj-detg}
				\left| \sqrt{\mathrm{det}\, \widehat{g}^{x}_{ij} (v)} -1 \right| \le c_2 \| v\|_2^2.
		\end{equation}
		\item[iii.] We have $\| \mathcal{E}_x(v)- x\|_2 \le \|v\|_2$. In addition, for all $v \in B_{\mathbb{R}^d}(0,c_1)$,

		\begin{equation}\label{eq:estimeeEronde1}
	\| \mathcal{E}_x(v)- x- \mathcal{E}_x'(0)(v)\|_2 \le c_2\|v\|_2^2,
\end{equation}and 
\begin{equation}\label{eq:estimeeEronde2}
			\| \mathcal{E}_x(v)- x- \mathcal{E}_x'(0)(v)-  \frac{1}{2}\mathcal{E}_x''(0)(v,v)\|_2 \le c_2\|v\|_2^3,
		\end{equation}
	\end{itemize}
\end{theorem}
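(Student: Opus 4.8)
The plan is to produce the family $(\mathcal{E}_x)_{x\in\cM}$ by patching together the local construction of Remark~\ref{rque:normal_chart_param}, and then to upgrade every pointwise/local statement to a uniform one by a compactness argument. First I would fix a finite open cover $U_1,\dots,U_N$ of $\cM$, together with open sets $U_\ell'$ with $\overline{U_\ell'}\subset U_\ell$ that still cover $\cM$, chosen so that on each $U_\ell$ the construction of Remark~\ref{rque:normal_chart_param} applies: there is a smooth orthonormal frame $(E_i^{(\ell)})_i$ on $U_\ell$, the associated map $F_\ell$ from \eqref{eq:def_F}, and hence a smooth map $(x,v)\mapsto \mathcal{E}^{(\ell)}_x(v):=\exp_x\!\big(\sum_i v_iE_i^{(\ell)}|_x\big)$, defined on the open set $F_\ell^{-1}(W\cap TU_\ell)$, which is a neighbourhood of $U_\ell\times\{0\}$ in $U_\ell\times\R^d$. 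Since $F_\ell^{-1}(W\cap TU_\ell)$ is open and contains the compact set $\overline{U_\ell'}\times\{0\}$, it contains a tube $\overline{U_\ell'}\times \overline{B_{\R^d}(0,r_\ell)}$ for some $r_\ell>0$; set $c_1:=\min_\ell r_\ell$, shrinking it further if necessary so that each $\mathcal{E}^{(\ell)}_x$ restricted to $B_{\R^d}(0,c_1)$ is a diffeomorphism onto its image (possible since $\cM$ is compact and normal coordinates are a chart inside the injectivity radius). For each $x\in\cM$ let $\ell(x)$ be the least index with $x\in U'_{\ell(x)}$ and put $\mathcal{E}_x:=\mathcal{E}^{(\ell(x))}_x\big|_{B_{\R^d}(0,c_1)}$. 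Then $\mathcal{E}_x$ is a smooth parameterization with common domain $B_{\R^d}(0,c_1)$, $\mathcal{E}_x(0)=x$, and $\mathcal{E}_x^{-1}$ is a normal coordinate chart at $x$, which is item~(i); joint smoothness in $x$ is not claimed, only smoothness of each $\mathcal{E}_x$.

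Next I would record the uniform derivative bounds that drive the rest. For each $\ell$, the maps $(x,v)\mapsto\mathcal{E}^{(\ell)}_x(v)\in\R^m$ and $(x,v)\mapsto\widehat g^{x,\ell}_{ij}(v)$ (the coordinate representation of $\bg(\mathcal{E}^{(\ell)}_x(v))$ in $\mathcal{E}^{(\ell)}_x$) are smooth on a neighbourhood of the compact set $\overline{U_\ell'}\times\overline{B_{\R^d}(0,c_1)}$, hence have all $v$-derivatives up to order three bounded there; taking the maximum over the finitely many $\ell$ gives a constant $M<\infty$ with
\[
\sup_{x\in\cM}\ \sup_{\|v\|_2\le c_1}\Big(\|\mathcal{E}_x''(v)\|+\|\mathcal{E}_x'''(v)\|+\big\|\partial_v^2\,\widehat g^x_{ij}(v)\big\|\Big)\le M .
\]
Moreover, by Theorem~\ref{Theorem: derivatives of Riemannian metrics in normal coordinate chartsA} applied to the normal chart $\mathcal{E}_x^{-1}$, one has $\widehat g^x_{ij}(0)=\delta_{ij}$ and $(\widehat g^x_{ij})'(0)=0$ for every $x\in\cM$.

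The three items now follow from Taylor's formula with the single uniform remainder $M$. For the first inequality in (iii), combining $\|z-y\|_2\le\rho(z,y)$ from Theorem~\ref{Theorem: Approximation inequality for Riemannian distanceA} with $\rho(x,\mathcal{E}_x(v))=\|\mathcal{E}_x^{-1}(\mathcal{E}_x(v))\|_2=\|v\|_2$ from \eqref{eq:rho=normePhi} gives $\|\mathcal{E}_x(v)-x\|_2\le\|v\|_2$. Estimates \eqref{eq:estimeeEronde1} and \eqref{eq:estimeeEronde2} are precisely the second- and third-order Taylor expansions of $v\mapsto\mathcal{E}_x(v)$ at $0$ (using $\mathcal{E}_x(0)=x$), whose remainders are bounded by $\tfrac12M\|v\|_2^2$ and $\tfrac16M\|v\|_2^3$ respectively. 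For (ii), put $\phi_x(v):=\sqrt{\det\widehat g^x_{ij}(v)}$; differentiating $\det$ and using $\widehat g^x_{ij}(0)=\delta_{ij}$, $(\widehat g^x_{ij})'(0)=0$ yields $\phi_x(0)=1$ and $\phi_x'(0)=0$, so a second-order Taylor expansion gives $|\phi_x(v)-1|\le\tfrac12\big(\sup_{\|v\|_2\le c_1}\|\phi_x''(v)\|\big)\|v\|_2^2$; since $\phi_x''$ is a smooth expression in $\widehat g^x$ and its $v$-derivatives (after shrinking $c_1$ so $\det\widehat g^x$ stays bounded away from $0$), that supremum is bounded over $\cM$ by some $c_2'$. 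Taking $c_2:=\max(M,c_2')$ (or a fixed multiple thereof) makes all the estimates hold simultaneously.

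The only genuinely delicate point is the first step: exhibiting a \emph{single} family with the \emph{common} domain $B_{\R^d}(0,c_1)$ and with the metric and all derivatives controlled \emph{uniformly} in $x$, rather than one chart per point with no control on how domains or metric coefficients degenerate as $x$ varies; this is exactly where compactness of $\cM$ enters (finite subcover plus a uniform lower bound $c_1$ on the radius on which the frame-transported $\exp_x$ is a diffeomorphism). Once the family is fixed, the rest is routine Taylor expansion against the uniform bound $M$. One should also note that selecting $\ell(x)$ by ``least index'' is a non-continuous rule but harmless: no regularity of $x\mapsto\mathcal{E}_x$ is required, and each $\mathcal{E}_x$ inherits smoothness from the single fixed chart $U_{\ell(x)}$.
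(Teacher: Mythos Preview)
Your proof is correct and follows essentially the same strategy as the paper: build the normal parameterizations locally via Remark~\ref{rque:normal_chart_param}, pass to a finite cover by compactness to obtain a common radius $c_1$ and uniform derivative bounds, and then read off items (ii)--(iii) from Taylor expansions together with Theorem~\ref{Theorem: derivatives of Riemannian metrics in normal coordinate chartsA} and Theorem~\ref{Theorem: Approximation inequality for Riemannian distanceA}. You are in fact more explicit than the paper at several points (the shrunken cover $\overline{U_\ell'}\subset U_\ell$, the selection rule $\ell(x)$, and the argument that $\phi_x'(0)=0$ forces the $\|v\|_2^2$ bound in \eqref{eq:maj-detg}), which only strengthens the presentation.
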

\begin{proof}[Proof for Theorem \ref{Theorem: Existence of a "good" family of normal coordinate systemsA}]
Let $U$ be an open domain of a local chart of $\cM$. Following Remark \ref{rque:normal_chart_param} and noticing that there is always an orthonormal frame over $U$, we can define a family of normal parameterizations $(\mathcal{E}_x)_{x \in U}$.

First, we easily note that $\| \mathcal{E}_x(v)- x\|_2 \le \|v\|_2$ thanks to Theorem \ref{Theorem: Approximation inequality for Riemannian distanceA} and $\eqref{eq:rho=normePhi}$.\\
Restricting $U$ if necessary (by an open subset with compact closure in $U$), there exists constants $c_1,c_2>0$ such that $\exp\circ F$ is well-defined on $U\times B_{\mathbb{R}^d}(0,c_1)$ and that all $v\in B_{\R^d}(0,c_1)$, \eqref{eq:estimeeEronde1}-\eqref{eq:estimeeEronde2} hold by Taylor expansions of $\mathcal{E}_x$. Equation \eqref{eq:maj-detg} is a consequence of the smoothness of $\mathcal{E}_x$ and Theorem \ref{Theorem: Existence of a "good" family of normal coordinate systemsA}.

Clearly, for each point $y\in \cM$, we can find an open neighborhood $U$ of $y$ and positive constants $c_1$ and $c_2$ such as above.  Hence, such open sets form an open covering of $\cM$. Therefore, by the compactness of $\cM$, there exists a finite covering of $\cM$ by such open sets $U$ and therefore, the constants $c_1$ and $c_2$ can be chosen uniformly for all $\mathcal{E}_x$.	
\end{proof}

\subsection{Gradient operator, Laplace-Beltrami operator}

Given a Riemannian manifold $(\mathcal{M},\mathbf{g})$, the gradient operator $\nabla_{\mathcal{M}}$ and the Laplace-Beltrami operator $\Delta_{\mathcal{M}}$ are,  as suggested by their names, the generalizations for differential manifolds of the gradient $\nabla_{\mathbb{R}^m}$, the Laplacian $\Delta_{\mathbb{R}^m}$ in the Euclidean space $\mathbb{R}^m$.

For a function $f$ of class $\cC^1$ on $\cM$, the gradient $\nabla_\cM f$ is expressed in local coordinates as
\begin{equation}\label{eq:def-gradient}
\nabla_\cM f(x)=\sum_{i,j=1}^dg^{ij}(x)\frac{\partial f}{ \partial x^i}(x)\left.\frac{\partial}{\partial x^j}\right\vert_x,
\end{equation}
where $\PAR{g^{ij}}_{1\leq i,j,\leq d}$ is the inverse matrix of $\PAR{g_{ij}}_{1\leq i,j,\leq d}$. Since $\sum_{j=1}^dg^{ij}g_{jk}=\delta_{ik}$, we note that for $f,h$ functions of class $\cC^1$,
\begin{equation}\label{eq:grad}
\SCA{\nabla_\cM( f),\nabla_\cM (h)}_\bg=\sum_{i,j=1}^dg^{ij}\frac{\partial f}{\partial x^i}\frac{\partial h}{\partial x^j}.
\end{equation}

The Laplace-Beltrami operator is defined by (see \cite[Section 3.1]{Hsu2002})
\begin{equation}\label{eq:def-LapB}
\lapM f:=\sum_{i,j=1}^d \frac{1}{\sqrt{\det(\bg)}} \frac{\partial}{ \partial x^i}\PAR{\sqrt{\det(\bg)}g^{ij}\frac{\partial f}{ \partial x^j}}.
\end{equation}

When using normal coordinates, the expressions of the Laplacian and the gradient of a smooth function $f$ at a point $x$ match their definitions in $\R^d$.
\begin{proposition}\label{Proposition: some calculations involving Laplacians and gradientsA}
	Suppose that $\Phi:  U\subset \mathcal{M}  \rightarrow \mathbb{R}^d$ is a normal coordinate chart at a point $x$ in $\mathcal{M}$ such that $\Phi(x)=0$, then:
	\begin{itemize}
\item[i.]   $\langle \nabla_{\mathcal{M}} f (x) ,\nabla_{\mathcal{M}} h(x) \rangle_\bg =\langle \nabla_{\mathbb{R}^d}  \hat{f} (0) ,\nabla_{\mathbb{R}^d} \hat{h}(0) \rangle.  $
\item[ii.] $ \Delta_{\mathcal{M}} f(x) = \Delta_{\mathbb{R}^d}\hat{f}(0),  $
	\end{itemize}
\end{proposition}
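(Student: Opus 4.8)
The plan is to reduce everything to the definitions \eqref{eq:def-gradient}, \eqref{eq:grad}, \eqref{eq:def-LapB} and then plug in the two facts from Theorem \ref{Theorem: derivatives of Riemannian metrics in normal coordinate chartsA}, namely $\widehat{g}_{ij}(0)=\delta_{ij}$ and $\widehat{g}'_{ij}(0)=0$. Concretely, fix the normal chart $\Phi$ at $x$ with $\Phi(x)=0$. The matrix $(g_{ij}(x))$ is the identity at the point $x$, so its inverse $(g^{ij}(x))$ is also the identity, i.e.\ $g^{ij}(x)=\delta_{ij}$. Also $\frac{\partial f}{\partial x^i}(x)=\frac{\partial \widehat f}{\partial x^i}(0)$ by the definition of derivatives on $\cM$ recalled above, and likewise for $h$.

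For part (i), I would start from \eqref{eq:grad}:
\[
\langle \nabla_\cM f(x),\nabla_\cM h(x)\rangle_\bg=\sum_{i,j=1}^d g^{ij}(x)\frac{\partial f}{\partial x^i}(x)\frac{\partial h}{\partial x^j}(x)=\sum_{i,j=1}^d \delta_{ij}\frac{\partial \widehat f}{\partial x^i}(0)\frac{\partial \widehat h}{\partial x^j}(0)=\sum_{i=1}^d\frac{\partial \widehat f}{\partial x^i}(0)\frac{\partial \widehat h}{\partial x^i}(0),
\]
which is exactly $\langle \nabla_{\R^d}\widehat f(0),\nabla_{\R^d}\widehat h(0)\rangle$. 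So (i) uses only the zeroth-order information $\widehat g_{ij}(0)=\delta_{ij}$.

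For part (ii), the point is that in \eqref{eq:def-LapB} the derivative $\frac{\partial}{\partial x^i}$ may fall either on $\frac{\partial f}{\partial x^j}$ or on the coefficient $\sqrt{\det(\bg)}\,g^{ij}$; when evaluated at $x$ (i.e.\ at $0$ in the chart) the terms where it hits the coefficient vanish, because $\widehat g'_{ij}(0)=0$ forces $\partial_i(\sqrt{\det \widehat g})(0)=0$ and $\partial_i \widehat g^{jk}(0)=0$ (the latter since $\widehat g^{jk}=$ cofactor$/\det$ is a smooth function of the $\widehat g_{ij}$, whose first derivatives all vanish at $0$). Hence at $x$,
\[
\lapM f(x)=\sum_{i,j=1}^d \widehat g^{ij}(0)\,\frac{\partial^2 \widehat f}{\partial x^i\partial x^j}(0)=\sum_{i=1}^d\frac{\partial^2 \widehat f}{(\partial x^i)^2}(0)=\Delta_{\R^d}\widehat f(0).
\]
The only mildly delicate point — and the one I would state carefully rather than brush over — is justifying that $\partial_i \widehat g^{jk}(0)=0$ and $\partial_i(\det \widehat g)(0)=0$ from $\widehat g'_{ij}(0)=0$; this follows from the chain rule applied to the rational/polynomial expressions for the inverse matrix and the determinant in terms of the entries $\widehat g_{ij}$, using that these are smooth near $0$ (since $\widehat g_{ij}(0)=\delta_{ij}$ makes $\det \widehat g(0)=1\neq 0$). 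Everything else is a direct substitution, so I do not anticipate a genuine obstacle; the proof is short.
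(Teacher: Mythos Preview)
Your argument is correct and follows essentially the same route as the paper: for (i) you invoke \eqref{eq:grad} together with $\widehat g_{ij}(0)=\delta_{ij}$, and for (ii) you invoke \eqref{eq:def-LapB} together with $\det\widehat g(0)=1$ and the vanishing of the first derivatives of $\widehat g_{ij}$ (hence of $\widehat g^{ij}$ and $\det\widehat g$) at $0$. If anything, you are slightly more explicit than the paper in spelling out why $\partial_i\widehat g^{jk}(0)=0$ and $\partial_i(\det\widehat g)(0)=0$ follow from $\widehat g'_{ij}(0)=0$ via the chain rule.
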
 
\begin{proof}[Proof for Proposition \ref{Proposition: some calculations involving Laplacians and gradientsA}]
Recall that $g_{ij}(x)=\widehat{g}_{ij}(0)$. By Theorem  \ref{Theorem: derivatives of Riemannian metrics in normal coordinate chartsA}, we know that $\hat{g}_{ij}(0)= \delta_{ij}$, thus, $\widehat{g}^{ij}(0)=\delta_{ij}$ and \textit{i.} is a consequence of \eqref{eq:grad}. For the equality \textit{ii.}, we use \eqref{eq:def-LapB}. Since for the normal coordinates $\text{det}\,\widehat{\bg} (0)=1$ and since the derivatives of $\widehat{g}_{ij}$ and $\widehat{g}^{ij}$ vanish at $0$, we have the conclusion.
\end{proof}



\section{Some kernel-based approximations of $\mathcal{A}$}
\label{Section: Approximation by a sequence of deterministic operators}

The aim of this Section is to prove the estimates for the two error terms in the RHS of \eqref{etape1} and prove the Propositions \ref{Theorem: convergence of averaging kerel operators} and \ref{Theorem: convergence of averaging kerel operators 2}. Both error terms are linked with the geometry of the problem and use the results presented in Section \ref{Section: Some geometric background}. The first one deals with the approximation of the Laplace-Beltramy operator by a kernel estimator (see Section \ref{sec:Proof_Prop_2.4}), while the second one treats the differences between the use of the Euclidean norm of $\R^m$ and the use of the geodesic distance (see Section \ref{sec:proof_Prop_2.3}).




\subsection{Weighted moment estimates}

We begin with an auxiliary estimation. The result is related to kernel smoothing and can also be useful in density estimation on manifolds (see e.g. \cite{berenfeldhoffman}). 
\begin{lemma} \label{Lemma: An useful estimation}
 Under Assumption \ref{hyp:K}, uniformly in $x \in \mathcal{M}$, when $h $ converges to $0$, we have:
 \begin{align}
 	& \frac{1}{h^{d+2}}\int_{\mathcal{M}} \mathbb{1}_{ \rho(x,y) \ge c_1 } K \left(\frac{\rho(x,y)}{h} \right)  \mu(\mathrm{d}y)=o(h),\label{Equation: rho>c_1 inequality}\\
 	& \frac{1}{h^{d+2}}\int_{\mathcal{M}} \mathbb{1}_{ \rho(x,y) \ge c_1 } K \left(\frac{\|x-y\|_2 }{h} \right)  \mu(\mathrm{d}y)=o(h),\label{Equation: rho>c_1 inequality_bis}
 \end{align}
 and there is a generic constant $c$ such that for all point $x \in \mathcal{M}$ and positive number $h>0$, we have:
 \begin{eqnarray}
 			\frac{1}{h^{d+2}}\int_{\mathcal{M}} K \left(\frac{\rho(x,y)}{h} \right) \|x -y\|_2^3 \mu(\mathrm{d}y) & \le&  ch ,
 	\label{Equation: an useful estimation 1}
 	\\
 		\frac{1}{h^{d+2}}\int_{\mathcal{M}} K \left(\frac{\rho(x,y)}{h} \right) \|x -y\|_2^2 \mu(\mathrm{d}y) & \le& c,
 	\label{Equation: an useful estimation 2}
 	\\
 		\frac{1}{h^{d+2}}\int_{\mathcal{M}} K \left(\frac{\|x-y\|_2}{h} \right) \|x -y\|_2^3 \mu(\mathrm{d}y) & \le& ch ,
 \label{Equation: an useful estimation 3}
 \\
 \frac{1}{h^{d+2}}\int_{\mathcal{M}} K \left(\frac{\|x-y\|_2}{h} \right) \|x -y\|_2^2 \mu(\mathrm{d}y)& \le & c.
 \label{Equation: an useful estimation 4}
 \end{eqnarray}

\end{lemma}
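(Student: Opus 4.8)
The plan is to reduce every integral over $\mathcal{M}$ to an integral over the ball $B_{\mathbb{R}^d}(0,c_1)$ in a normal coordinate chart via the "good" family of parameterizations $(\mathcal{E}_x)$ from Theorem \ref{Theorem: Existence of a "good" family of normal coordinate systemsA}, then to handle the resulting one-dimensional integrals against $K$ using only the bounded-variation hypothesis \eqref{Assumption: A1} and an integration-by-parts (Stieltjes) argument. First I would dispose of the "tail" estimates \eqref{Equation: rho>c_1 inequality} and \eqref{Equation: rho>c_1 inequality_bis}: on the region $\{\rho(x,y)\ge c_1\}$ (equivalently, by Theorem \ref{Theorem: Approximation inequality for Riemannian distanceA}, $\|x-y\|_2$ bounded below by a fixed constant), the argument $\rho(x,y)/h$ of $K$ is at least $c_1/h\to\infty$, so $K(\rho(x,y)/h)\le H(\infty)-H(c_1/h)$, which tends to $0$; more quantitatively, since $\int_0^\infty a^{d+3}\,dH(a)<\infty$ forces $a^{d+3}(H(\infty)-H(a))\to 0$, we get $K(a)=o(a^{-(d+3)})$, hence $h^{-(d+2)}K(\rho(x,y)/h)\le h^{-(d+2)}\cdot o((h/c_1)^{d+3})=o(h)$ uniformly, and $\mu(\mathcal{M})<\infty$ finishes it. The same bound with $\|x-y\|_2$ in place of $\rho(x,y)$ handles \eqref{Equation: rho>c_1 inequality_bis}, using $\|x-y\|_2\le\rho(x,y)$.

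For the four moment bounds, by the tail estimates just proved it suffices to integrate over $\{\rho(x,y)<c_1\}$, i.e. over $y=\mathcal{E}_x(v)$ with $v\in B_{\mathbb{R}^d}(0,c_1)$. Changing variables, $\mu(\mathrm{d}y)=\sqrt{\det\widehat g^x_{ij}(v)}\,\mathrm{d}v$, which by \eqref{eq:maj-detg} lies in $[1-c_2\|v\|_2^2,\,1+c_2\|v\|_2^2]\subset[0,1+c_2c_1^2]$, and by \eqref{eq:rho=normePhi} we have $\rho(x,\mathcal{E}_x(v))=\|v\|_2$, while $\|x-\mathcal{E}_x(v)\|_2\le\|v\|_2$ by item iii. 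Therefore each left-hand side is bounded, uniformly in $x$, by
\[
\frac{1+c_2c_1^2}{h^{d+2}}\int_{B_{\mathbb{R}^d}(0,c_1)} K\!\left(\frac{\|v\|_2}{h}\right)\|v\|_2^{\,\ell}\,\mathrm{d}v
\le \frac{C}{h^{d+2}}\int_{\mathbb{R}^d} K\!\left(\frac{\|v\|_2}{h}\right)\|v\|_2^{\,\ell}\,\mathrm{d}v
\]
for $\ell\in\{2,3\}$. Passing to polar coordinates this equals $C\,S_{d-1}\,h^{-d-2}\int_0^\infty K(r/h)r^{d-1+\ell}\,\mathrm{d}r = C\,S_{d-1}\,h^{\ell-2}\int_0^\infty K(a)a^{d-1+\ell}\,\mathrm{d}a$. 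So I must show $\int_0^\infty K(a)a^{d-1+\ell}\,\mathrm{d}a<\infty$ for $\ell=2,3$, i.e. for exponents $d+1$ and $d+2$: this gives $O(1)$ for $\ell=2$ (proving \eqref{Equation: an useful estimation 2} and \eqref{Equation: an useful estimation 4}) and $O(h)$ for $\ell=3$ (proving \eqref{Equation: an useful estimation 1} and \eqref{Equation: an useful estimation 3}), since $\|x-y\|_2\le\rho(x,y)$ lets the Euclidean-norm versions inherit the geodesic-norm bounds. Finiteness of $\int_0^\infty K(a)a^{m}\,\mathrm{d}a$ for $m\le d+2$ follows from \eqref{Assumption: A1} by writing $K(a)=K(\infty)-\int_a^\infty dH=\int_a^\infty(-dH(t))$ (using $K(\infty)=0$ and that $K$ has bounded variation), so by Fubini $\int_0^\infty K(a)a^m\,\mathrm{d}a=\int_0^\infty\big(\int_0^t a^m\,\mathrm{d}a\big)(-dH(t))=\frac{1}{m+1}\int_0^\infty t^{m+1}(-dH(t))$, which is finite because $m+1\le d+3$ and $H$ is eventually constant (so higher moments dominate lower ones up to the constant $\mathrm{diam}(\mathcal{M})$, but here we can simply invoke \eqref{Assumption: A1} directly since $\int_0^\infty t^{m+1}\,dH(t)\le \int_0^\infty t^{d+3}\,dH(t)+\int_0^1 t^{m+1}dH(t)<\infty$).

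The main obstacle I anticipate is purely bookkeeping rather than conceptual: making the change of variables and the reduction to the polar integral genuinely \emph{uniform} in $x\in\mathcal{M}$, which is exactly what Theorem \ref{Theorem: Existence of a "good" family of normal coordinate systemsA} was engineered to provide (common domain $B_{\mathbb{R}^d}(0,c_1)$, uniform constant $c_2$). A minor subtlety is that the moment condition \eqref{Assumption: A1} is stated with the Stieltjes measure $dH$ and the derivation of $\int_0^\infty K(a)a^m\,da<\infty$ from it requires the Fubini/integration-by-parts step above together with $K(\infty)=0$; one should also note $K\ge0$ so no cancellation issues arise. Everything else is the elementary scaling $a=r/h$ in the radial integral.
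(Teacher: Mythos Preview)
Your treatment of the tail estimates \eqref{Equation: rho>c_1 inequality}--\eqref{Equation: rho>c_1 inequality_bis} and of the geodesic moment bounds \eqref{Equation: an useful estimation 1}--\eqref{Equation: an useful estimation 2} is essentially the paper's argument and is fine. The genuine gap is in your passage to the Euclidean moment bounds \eqref{Equation: an useful estimation 3}--\eqref{Equation: an useful estimation 4}. You assert that ``each left-hand side'' is bounded by $\frac{1+c_2c_1^2}{h^{d+2}}\int K(\|v\|_2/h)\|v\|_2^\ell\,\mathrm{d}v$, and later that ``$\|x-y\|_2\le\rho(x,y)$ lets the Euclidean-norm versions inherit the geodesic-norm bounds.'' But after the change of variables the Euclidean integrand contains $K(\|x-\mathcal{E}_x(v)\|_2/h)$, not $K(\|v\|_2/h)$, and the inequality $\|x-\mathcal{E}_x(v)\|_2\le\|v\|_2$ does \emph{not} yield $K(\|x-\mathcal{E}_x(v)\|_2/h)\le K(\|v\|_2/h)$: Assumption~\ref{hyp:K} imposes no monotonicity on $K$. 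So the displayed bound is unjustified for the Euclidean cases.

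The paper's fix is to pass to a monotone majorant of $K$. From $K(\infty)=0$ and bounded variation one has $K(a)\le H(\infty)-H(a)$, and Theorem~\ref{Theorem: Approximation inequality for Riemannian distanceA} gives $\|x-y\|_2\ge \rho(x,y)/(1+c_3\,\mathrm{diam}(\mathcal{M})^2)$, so
\[
K\!\left(\tfrac{\|x-y\|_2}{h}\right)\;\le\; H(\infty)-H\!\left(\tfrac{\rho(x,y)}{h(1+c_3\,\mathrm{diam}(\mathcal{M})^2)}\right)\;=:\;\tilde K\!\left(\tfrac{\rho(x,y)}{h}\right),
\]
where $\tilde K$ is non-increasing and still satisfies Assumption~\ref{hyp:K}. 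The Euclidean estimates then reduce to the geodesic ones with $K$ replaced by $\tilde K$. A smaller point: your identity $K(a)=\int_a^\infty(-dH(t))$ is not correct ($H$ is the total variation of $K$, not its Stieltjes measure); what you need---and what the paper uses---is the inequality $K(a)\le H(\infty)-H(a)=\int_a^\infty \mathrm{d}H(t)$, after which your Fubini step gives $\int_0^\infty K(a)a^m\,\mathrm{d}a\le \frac{1}{m+1}\int_0^\infty t^{m+1}\,\mathrm{d}H(t)<\infty$ for $m\le d+2$.
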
	

\begin{proof}[Proof of Lemma \ref{Lemma: An useful estimation}]
Using Lemma \ref{Lemma: an inequality on bounded variation functions}, we have:
\begin{align}
		\int_{\mathcal{M}} \mathbb{1}_{ \rho(x,y) \ge  c_1 } K \left(\frac{\rho(x,y)}{h} \right)  \mu(\mathrm{d}y)  & \le \mu(\mathcal{M}) \sup_{r \ge c_1} K\left( \frac{r}{h}\right)  \nonumber \\
		 & \le \mu(\mathcal{M}) \left[ H(\infty)-H\left( \frac{c_1}{h}\right) \right]   \nonumber\\
		 & = \mu(\mathcal{M})  \int_{(c_1/h,\infty)} \textrm{d}H(a) 
		 \nonumber \\
		 &\le h^{d+3} \frac{ \mu(\mathcal{M})}{c_1^{d+3}} \int_{(c_1/h,\infty)} a^{d+3}\textrm{d}H(a).  
\end{align}
Thanks to the boundedness of  $ \int_0^{\infty}a^{d+3}\mathrm{d}H(a)$, we obtain \eqref{Equation: rho>c_1 inequality}.
Then,  as a consequence of \eqref{Equation: rho>c_1 inequality}, by the compactness of $\mathcal{M}$, we easily observe that uniformly in $x \in \mathcal{M}$, when $h$ converges to $0$, $$\frac{1}{h^{d+2}}\int_{\mathcal{M}} \mathbb{1}_{ \rho(x,y) \ge c_1 } K \left(\frac{\rho(x,y)}{h} \right) \ \|x -y\|_2^3 \ \mu(\mathrm{d}y)=o(h). $$ \\
So, to prove Inequality \eqref{Equation: an useful estimation 1}, it is left to prove that uniformly in $x$,  when $h$ converges to $0$,
\begin{equation}\label{Equation: a partial integral}
	I:=\frac{1}{h^{d+3}}\int_{\mathcal{M}} \mathbb{1}_{ \rho(x,y) < c_1 } K \left(\frac{\rho(x,y)}{h} \right) \|x -y\|_2^3 \mu(\mathrm{d}y)=O(1). 
\end{equation}
Recall that in Theorem \ref{Theorem: Existence of a "good" family of normal coordinate systemsA}, we showed that for each point $x \in\mathcal{M}$, there is a local smooth parameterization $\mathcal{E}_x$ of $\mathcal{M}$ that has many nice properties, especially $\rho(x,y)= \| \mathcal{E}_x^{-1}(y)\|_2$ for all $y$ within an appropriate neighborhood of $x$ by \eqref{eq:rho=normePhi}. Thus, the term $I$ in the left hand side (LHS) of \eqref{Equation: a partial integral} can be re-written in its coordinate representation under the parameterization $\mathcal{E}_x$ by using the change of variables $v=\mathcal{E}_x^{-1}(y)$:
\begin{align}
			I &=\frac{1}{h^{d+3}} \int_{ B_{\mathbb{R}^d}(0,c_1)}K \left(\frac{\|v\|_2}{h} \right) \| x -\mathcal{E}_x(v)\|_2^3 \sqrt{\text{det}\widehat{g}^{x}_{ij}}(v)\mathrm{d}v.  \nonumber
\end{align}
Then, using Theorem \ref{Theorem: Approximation inequality for Riemannian distanceA}  and Theorem \ref{Theorem: Existence of a "good" family of normal coordinate systemsA} ($ii$ and $iii$),
\begin{align}
I			&\le \frac{c_2}{h^{d+3}}\int_{ B_{\mathbb{R}^d}(0,c_1)}K \left(\frac{\|v\|_2}{h} \right) \| v \|_2^3 (1+ c_2\|v\|_2^2)\text{d}v  \nonumber \\
			& \le \frac{c_2}{h^{d+3}}\int_{ B_{\mathbb{R}^d}(0,c_1)}K \left(\frac{\|v\|_2}{h} \right) \| v \|_2^3 (1+ c_2c_1^2)\text{d}v  \nonumber \\
			& \le \frac{c_2}{h^{d+3}}\int_{ \mathbb{R}^d}K \left(\frac{\|v\|_2}{h} \right) \| v \|_2^3 (1+ c_2c_1^2)\text{d}v. \nonumber
			\\
			&= c_2(1+c_2c_1^2)\int_{\mathbb{R}^d} K( \|v\|_2)\|v\|_2^3 \text{d}v, \label{Equation: a partial integral 2}
\end{align}
Using the spherical coordinate system when $d\geq 2$: 
\begin{align}
I & \leq c_2(1+c_2c_1^2) \left[  \int_{0}^{\infty} K(a) a^3 \times a^{d-1}\text{d}a \right] \times \nonumber \\
				& \qquad \times\left[  \int_{ [0,2\pi]\times[0,\pi]^{d-2}}  \sin^{d-2}(\theta_1)\sin^{d-3}(\theta_2) \cdots \sin(\theta_{d-2})\mathrm{d} \theta\right]\nonumber
				\\
				&=  c_2(1+c_2c_1^2)\ S_{d-1} \left[  \int_{0}^{\infty} K(a) a^{d+2}\text{d}a \right]. \nonumber
\end{align}For $d=1$, we use that $\int_{\mathbb{R}^1} K(|v|)|v|^3 \mathrm{d}v=2\times \int_{0}^{\infty} K(a)a^{1+2}\mathrm{d}a$.
\\
Hence, by Lemma \ref{Lemma: an inequality on bounded variation functions} in the Appendix, and  Fubini's theorem, we have:
\begin{align}
		I	&\le  c_2(1+c_2c_1^2) S_{d-1}	 \int_{0}^{\infty} \left( H(\infty)-H(a) \right) a^{d+2}\text{d}a  \nonumber
	\\
	&=  c_2(1+c_2c_1^2)(d+3)^{-1}S_{d-1}  \int_{[0,\infty]} b^{d+3} \textrm{d}H(b)<\infty  \label{Equation: good inequality 2}.
\end{align}
Therefore, Inequality \eqref{Equation: an useful estimation 1} is proved. The proof of Inequality \eqref{Equation: an useful estimation 2} is similar.\\
For Inequalities \eqref{Equation: rho>c_1 inequality_bis}, \eqref{Equation: an useful estimation 3} and  \eqref{Equation: an useful estimation 4}, we observe that they are indeed consequences of \eqref{Equation: rho>c_1 inequality}, \eqref{Equation: an useful estimation 1}  and \eqref{Equation: an useful estimation 2}. Consider for example \eqref{Equation: rho>c_1 inequality_bis}, using again Lemma \ref{Lemma: an inequality on bounded variation functions} and Theorem \ref{Theorem: Approximation inequality for Riemannian distanceA}:
\begin{align}
		&\frac{1}{h^{d+2}}\int_{ \mathcal{M}} \ind_{\rho(x,y)\geq c_1}K \left(\frac{\|x-y\|_2}{h} \right)  \mu(\mathrm{d}y) \nonumber \\
	\le &\frac{1}{h^{d+2}}\int_{ \mathcal{M}}  \ind_{\rho(x,y)\geq c_1}\left[ H(\infty)- H \left( \frac{\rho(x,y)}{h(1+c_3\|x-y\|_2^2)}  \right)  \right] \mu(\mathrm{d}y) \nonumber
	\\
	\le &\frac{1}{h^{d+2}}\int_{ \mathcal{M}} \ind_{\rho(x,y)\geq c_1} \left[ H(\infty)- H \left( \frac{\rho(x,y)}{h(1+c_3\text{diam}(\mathcal{M})^2)}  \right)  \right] \mu(\mathrm{d}y) \nonumber
	\\
	= &\frac{1}{h^{d+2}}\int_{ \mathcal{M}}\ind_{\rho(x,y)\geq c_1}   \tilde{K} \left(\frac{\rho(x,y)}{h} \right)  \mu(\mathrm{d}y), \label{Inequality: an argument to replace K}
	\end{align}
for $\tilde{K} (a) := H(\infty)- H\left(   \frac{a}{1+c_3\text{diam}(\mathcal{M})^2}\right) $ and where the second inequality uses that $H$ is a non-decreasing function. So Inequality \eqref{Equation: rho>c_1 inequality_bis} corresponds to Inequality \ref{Equation: rho>c_1 inequality} where $K$ is replaced with $\tilde{K}$. Clearly, the function $\tilde{K}$ is of bounded variation and satisfies Assumption \ref{hyp:K}, which conclude the proof for \eqref{Equation: rho>c_1 inequality_bis}. The arguments are similar for \eqref{Equation: an useful estimation 3} and  \eqref{Equation: an useful estimation 4}.
\end{proof}

\subsection{Proof of Proposition \ref{Theorem: convergence of averaging kerel operators}}\label{sec:Proof_Prop_2.4}

In this section, we prove Proposition \ref{Theorem: convergence of averaging kerel operators}, dealing with the approximation of the Laplace Beltrami operator by a kernel operator.\\

In the course of the proof, some quantities involving gradients and Laplacian will appear repetitively. The next lemma deal will be useful to deal with these expressions and its proof is postponed to Appendix \ref{app:proofLemma_aux}:
\begin{lemma}(Some auxiliary calculations)
	\label{Lemma: Some auxiliary calculations.}
	Suppose that $f,h: \mathbb{R}^m \rightarrow \mathbb{R}$, $k: \mathbb{R}^d \rightarrow \mathbb{R}^m$ are $\mathcal{C}^2$-continuous functions, that $k(0)=x$ and suppose that $G: \mathbb{R}_+  \rightarrow \mathbb{R}$ is a locally bounded measurable function. Then, for all $c>0$:
	\begin{multline}
		\int_{B_{\mathbb{R}^d}(0,c)}  G(\|v\|_2) \langle \nabla_{\mathbb{R}^m}f(x) , k'(0)(v)\rangle \langle \nabla_{\mathbb{R}^m}h(x) , k'(0)(v)\rangle \ \mathrm{d}v \\
		=\langle \nabla_{\mathbb{R}^d}( f\circ k )(0), \nabla_{\mathbb{R}^d}( h\circ k )(0)\rangle \left( \frac{1}{d}\int_{B_{\mathbb{R}^d}(0,c)}  G(\|v\|_2)  \|v\|_2^2 \mathrm{d}v \right), \label{Equation: first equation in the auxiliary lemma}
	\end{multline}
and that:
\begin{multline}
	\label{Equation: second equation in the auxiliary lemma}
	\int_{B_{\mathbb{R}^d}(0,c)} G \left(\|v\|_2 \right) \bigg[ \left\langle \nabla_{\mathbb{R}^m} f(x),k'(0)(v)+\frac{1}{2}k''(0)(v,v) \right\rangle 
	\\ + \frac{1}{2}f''(x)(k'(0)(v),k'(0)(v))  \bigg] \mathrm{d}v\\
	=
 \frac{1}{2}\Delta_{\mathbb{R}^d}(f\circ k)(0)\left( \frac{1}{d}\int_{B_{\mathbb{R}^d}(0,c)}  G(\|v\|_2)  \|v\|_2^2 \mathrm{d}v \right). 
\end{multline}
\end{lemma}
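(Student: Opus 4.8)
The plan is to prove both identities \eqref{Equation: first equation in the auxiliary lemma} and \eqref{Equation: second equation in the auxiliary lemma} by reducing everything, via the chain rule, to elementary integrals of the form $\int_{B_{\R^d}(0,c)} G(\|v\|_2) v_i v_j\, \mathrm{d}v$ over a Euclidean ball, which vanish when $i\neq j$ by an odd-symmetry argument and equal $\frac1d\int_{B_{\R^d}(0,c)} G(\|v\|_2)\|v\|_2^2\,\mathrm{d}v$ when $i=j$ (by symmetry among the coordinates and summing to $\int G\|v\|_2^2$). Since $G$ is only locally bounded and measurable but the domain is a fixed bounded ball, all these integrals are finite, so there is no integrability issue; I would state this radial-moment computation as the first step.

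For \eqref{Equation: first equation in the auxiliary lemma}: write $\nabla_{\R^m}f(x)=\sum_a \partial_a f(x)\, e_a$ and note $\langle \nabla_{\R^m}f(x), k'(0)(v)\rangle = \sum_a \partial_a f(x)\, (k'(0)(v))^a = \sum_{a,i} \partial_a f(x)\, \partial_i k^a(0)\, v_i$, and similarly for $h$. Multiplying the two expressions and integrating, only the diagonal terms $v_i^2$ survive by the moment computation, producing a factor $\frac1d\int_{B} G\|v\|_2^2$ times $\sum_i \big(\sum_a \partial_a f(x)\partial_i k^a(0)\big)\big(\sum_b \partial_b h(x)\partial_i k^b(0)\big)$. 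The chain rule gives $\partial_i(f\circ k)(0)=\sum_a \partial_a f(k(0))\,\partial_i k^a(0) = \sum_a \partial_a f(x)\,\partial_i k^a(0)$ (using $k(0)=x$), so that sum is exactly $\langle \nabla_{\R^d}(f\circ k)(0), \nabla_{\R^d}(h\circ k)(0)\rangle$, giving the claim.

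For \eqref{Equation: second equation in the auxiliary lemma}: expand each bracketed term in coordinates. The term $\langle \nabla_{\R^m}f(x), k'(0)(v)\rangle$ is linear in $v$ and hence integrates to $0$ over the symmetric ball, so it drops out. For the remaining two terms I would compute the second-order Taylor coefficient of $f\circ k$ at $0$: by the chain rule, $\partial_i\partial_j(f\circ k)(0) = \sum_{a,b}\partial_a\partial_b f(x)\,\partial_i k^a(0)\partial_j k^b(0) + \sum_a \partial_a f(x)\,\partial_i\partial_j k^a(0)$, i.e. $(f\circ k)''(0)(v,v) = f''(x)(k'(0)(v),k'(0)(v)) + \langle \nabla_{\R^m}f(x), k''(0)(v,v)\rangle$. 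Thus the integrand (ignoring the linear term) equals $\frac12 (f\circ k)''(0)(v,v)$, and integrating with weight $G(\|v\|_2)$ and applying the moment computation (off-diagonal terms vanish, diagonal ones average) yields $\frac12\sum_i \partial_i^2(f\circ k)(0)\cdot \frac1d\int_B G\|v\|_2^2 = \frac12 \Delta_{\R^d}(f\circ k)(0)\cdot\frac1d\int_B G\|v\|_2^2$, as required.

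The only mildly delicate point — and the thing I would be careful to spell out — is the bookkeeping of the chain rule for the second derivative of a composition with a vector-valued inner map $k:\R^d\to\R^m$, making sure the "$k''$" term in the hypothesis statement is matched correctly with the $\sum_a \partial_a f(x)\partial_i\partial_j k^a(0)$ term coming from differentiating $k'$; everything else is routine linear algebra plus the symmetry of the ball. I would also remark that $f''(x)$ here denotes the Hessian bilinear form of $f$ on $\R^m$ restricted to the two vectors $k'(0)(v)$, consistent with the notation introduced earlier, so no further justification is needed. No genuine obstacle is expected; the proof is a direct computation and is appropriately relegated to Appendix \ref{app:proofLemma_aux}.
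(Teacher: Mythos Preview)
Your proposal is correct and follows essentially the same route as the paper's proof: both use the radial-symmetry identities $\int_{B}G(\|v\|_2)v^i v^j\,\mathrm{d}v=\delta_{ij}\frac1d\int_B G(\|v\|_2)\|v\|_2^2\,\mathrm{d}v$ and $\int_B G(\|v\|_2)v^i\,\mathrm{d}v=0$, expand the integrands in coordinates, and identify the result via the chain rule for $f\circ k$. The only cosmetic difference is that you package the second computation by first recognising the integrand as $\tfrac12(f\circ k)''(0)(v,v)$, whereas the paper expands fully in coordinates and reassembles $\Delta_{\R^d}(f\circ k)(0)$ at the end.
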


Let us now prove Proposition \ref{Theorem: convergence of averaging kerel operators}.
As $\mathcal{M}$ is compact, $\mathcal{M}$ is a properly embedded  submanifold of $\mathbb{R}^m$ (see \cite[p.98]{Lee2013}). Hence, any  function of class $\mathcal{C}^3$ on $\mathcal{M}$ can be extended to a function of class $\mathcal{C}^3$  on $\mathbb{R}^m$, see \cite[Lemma 5.34]{Lee2013}. So without loss of generality, assume $f$ and $p$ are $\mathcal{C}^3$ and $\mathcal{C}^2$ functions on $\mathbb{R}^m$ with compact supports.\\
Recall that we want to study $\ABS{\widetilde{\cA}_h(f)-\cA(f)}$ where $\mathcal{A}$ and $\widetilde{\mathcal{A}}_h$ have been respectively defined in \eqref{Equation: differential operator} and \eqref{def:Atildeh}.
So, introducing the constant $c_1>0$ of Lemma \ref{Lemma: An useful estimation}  and noticing that  $f$ and $p$ are uniformly bounded on the compact $\mathcal{M}$, to prove Proposition \ref{Theorem: convergence of averaging kerel operators}, we only have to prove that uniformly in $x\in \cM$,
$$\left|\mathcal{A}(f)(x)- \frac{1}{h^{d+2}}\int_{\mathcal{M}} \mathbb{1}_{\rho(y,x)<c_1}K \left(\frac{\rho(x,y)}{h} \right) ( f(y)-f(x))p(y)\mu(\mathrm{d}y) \right|=O(h).$$
Besides, thanks to the compactness of $\mathcal{M}$ and to the regularity of $f$ and $p$, Taylor's expansion implies that there is a constant $c_4$ such that for all $x,y \in \mathcal{M}$:
\begin{multline}
	\Bigg| (f(y) -f(x))p(y) \\
	-\bigg( \langle \nabla_{\mathbb{R}^m} f(x),y-x \rangle +\frac{1}{2}f''(x)(y-x,y-x) \bigg)p(x)  \\ 
	- \langle \nabla_{\mathbb{R}^m}  f(x),y-x \rangle \langle \nabla_{\mathbb{R}^m}  p(x),y-x \rangle\Bigg| \le c_4 \|x-y\|_2^3. 
\end{multline} 
Hence, by Inequality \eqref{Equation: an useful estimation 1}, it is sufficient to prove that uniformly in $x$, 
\begin{align} \label{Equation: an integral that we have to change}
	J_1:= & \frac{1}{h^{d+2}}\int_{\mathcal{M}}\mathbb{1}_{\rho(y,x)<c_1} K \left(\frac{\rho(x,y)}{h} \right) \langle \nabla_{\mathbb{R}^m} f(x),y-x \rangle \times \nonumber\\
	& \hspace{6cm} \langle \nabla_{\mathbb{R}^m} p(x),y-x \rangle\mu(\mathrm{d}	y) \nonumber\\
	= & c_0\SCA{\nabla_{\mathcal{M}}(f)(x), \nabla_{\mathcal{M}}(p)(x)}_\bg+O(h) .
\end{align}
and
\begin{align}
	J_2:=& \frac{1}{h^{d+2}}\int_{\mathcal{M}} \mathbb{1}_{\rho(y,x)<c_1}K \left(\frac{\rho(x,y)}{h} \right) \bigg[ \langle \nabla_{\mathbb{R}^m} f(x),y-x \rangle \nonumber\\
	& \hspace{6cm} +\frac{1}{2}f''(x)(y-x,y-x) \bigg] \mu(\mathrm{d}y) \nonumber\\
	=&  \frac{1}{2}c_0\Delta_{\mathcal{M}}(f)(x)+O(h).  \label{Equation: an estimation has to be done}
\end{align}
The proof is similar than the study of $I$ given by \eqref{Equation: a partial integral} in the proof of Lemma \ref{Lemma: An useful estimation}.
We re-write the considered integrals in coordinate representations. Using the change of variables $v=\mathcal{E}_x^{-1}(y)$, we have
\begin{multline*}
J_1=\frac{1}{h^{d+2}}\int_{B_{\mathbb{R}^d}(0,c_1)} K \left(\frac{\|v\|_2}{h} \right) \langle \nabla_{\mathbb{R}^m} f(x),\mathcal{E}_x(v)-x \rangle\\
\times\langle \nabla_{\mathbb{R}^m} p(x),\mathcal{E}_x(v)-x \rangle \sqrt{ \mathrm{det}\widehat{g}^x_{ij}(v)} \mathrm{d}v.
\end{multline*}
By properties $ii.$ and $iii.$ in Theorem \ref{Theorem: Existence of a "good" family of normal coordinate systemsA} we have
\begin{align}
	\Bigg|J_1
	-	&\int_{B_{\mathbb{R}^d}(0,c_1)} K \left(\frac{\|v\|_2}{h} \right) \langle \nabla_{\mathbb{R}^m} f(x),\mathcal{E}_x(v)-x \rangle \langle \nabla_{\mathbb{R}^m} p(x),\mathcal{E}_x(v)-x \rangle  \mathrm{d}v\Bigg| \nonumber
	\\
		\le &\frac{c_2}{h^{d+2}} \|\nabla_{\mathbb{R}^m} f(x)\|_2 \| \nabla_{\mathbb{R}^m} p(x)\|_2 \int_{B_{\mathbb{R}^d}(0,c_1)} K \left(\frac{\|v\|_2}{h} \right)    \|v\|_2^2 .\| \mathcal{E}_x(v)-x\|_2^2 \mathrm{d}v \nonumber
	\\
	\le &\frac{c_2^3}{h^{d+2}} \|\nabla_{\mathbb{R}^m} f(x)\|_2 \| \nabla_{\mathbb{R}^m} p(x)\|_2 \int_{B_{\mathbb{R}^d}(0,c_1)} K \left(\frac{\|v\|_2}{h} \right)    \|v\|_2^4  \mathrm{d}v \nonumber
	\\
	\le & \frac{c_2^3c_1}{h^{d+2}} \|\nabla_{\mathbb{R}^m} f(x)\|_2 \| \nabla_{\mathbb{R}^m} p(x)\|_2 \int_{B_{\mathbb{R}^d}(0,c_1)} K \left(\frac{\|v\|_2}{h} \right)    \|v\|_2^3  \mathrm{d}v. \nonumber
\end{align}
As in the proof of Lemma \ref{Lemma: An useful estimation}, we deduce that the latter is bounded by $O(h)$.
\\
Besides, using again Property $iii.$ in Theorem \ref{Theorem: Existence of a "good" family of normal coordinate systemsA}, we have that uniformly in $x$,
\begin{multline}
\Big|\frac{1}{h^{d+2}}\int_{B_{\mathbb{R}^d}(0,c_1)} K \left(\frac{\|v\|_2}{h} \right) \langle \nabla_{\mathbb{R}^m} f(x),\mathcal{E}_x(v)-x \rangle \times \\
\langle \nabla_{\mathbb{R}^m} p(x),\mathcal{E}_x(v)-x \rangle  \mathrm{d}v
-J_{11}\Big|
=O(h)
\end{multline}with
\begin{align*}
J_{11}:= &\frac{1}{h^{d+2}}\int_{B_{\mathbb{R}^d}(0,c_1)} K \left(\frac{\|v\|_2}{h} \right) \langle \nabla_{\mathbb{R}^m} f(x),\mathcal{E}_x'(0)(v) \rangle \langle \nabla_{\mathbb{R}^m} p(x),\mathcal{E}_x'(0)(v) \rangle  \mathrm{d}v.
\end{align*}
Let us now compare $J_{11}$ to the first term of the generator $\cA$.
Using Equation \eqref{Equation: first equation in the auxiliary lemma} of Lemma \ref{Lemma: Some auxiliary calculations.}, with $G(||v||_2)=K\PAR{\frac{||v||_2}{h}}$, $k=\mathcal{E}_x$, and Proposition \ref{Proposition: some calculations involving Laplacians and gradientsA}, we have:
\begin{align*}
J_{11} =&\frac{1}{h^{d+2}}\left( \frac{1}{d}\int_{B_{\mathbb{R}^d}(0,c_1)}  K\left(\frac{\|v\|_2}{h}\right)  \|v\|_2^2 \mathrm{d}v \right) \langle \nabla_{\mathbb{R}^d}( f\circ \mathcal{E}_x )(0), \nabla_{\mathbb{R}^d}( p \circ \mathcal{E}_x )(0)\rangle 
\\
=& \left( \frac{1}{d}\int_{B_{\mathbb{R}^d}(0,c_1/h)}  K\left(\|v\|_2\right)   \|v\|_2^2 \mathrm{d}v \right) \langle \nabla_{\mathcal{M}} f(x), \nabla_{\mathcal{M}} p(x) \rangle_\bg
\\
=& \left( \frac{1}{d}\int_{\mathbb{R}^d}  K\left(\|v\|_2\right)   \|v\|_2^2 \mathrm{d}v \right) \langle \nabla_{\mathcal{M}} f(x), \nabla_{\mathcal{M}} p(x) \rangle_\bg+o(h),
\end{align*}
where the last estimation is uniform in $x \in \mathcal{M}$ and comes from the second estimation in \eqref{Equation: second inequality on K} in Lemma \ref{Lemma: an inequality on bounded variation functions}.
Thus, we have proved Equation \eqref{Equation: an integral that we have to change} for $J_1$.\\
The proof for $J_2$, given by  \eqref{Equation: an estimation has to be done}, is similar to what we have done for $J_1$. For identifying the Laplace-Beltrami operator in the last step of the proof, we use Equation \eqref{Equation: second equation in the auxiliary lemma} of Lemma \ref{Lemma: Some auxiliary calculations.} and the second point of Proposition \ref{Proposition: some calculations involving Laplacians and gradientsA}.
Therefore, we have proved Proposition \ref{Theorem: convergence of averaging kerel operators}.\hfill $\Box$

\subsection{Proof of Proposition \ref{Theorem: convergence of averaging kerel operators 2}}\label{sec:proof_Prop_2.3}

Let us now prove Proposition  \ref{Theorem: convergence of averaging kerel operators 2}. This proposition deals with the difference between the geodesic distance on $\cM$ and the Euclidean norm of $\R^m$.\\

By Inequalities \eqref{Equation: rho>c_1 inequality} and \eqref{Equation: rho>c_1 inequality_bis} of Lemma \ref{Lemma: An useful estimation}, we know that uniformly in $x$, when $h$ converges to $0$,
$$\int_{ \mathcal{M}} \PAR{K\left( \frac{\| x-y\|_2}{h}\right)+K\left( \frac{\rho( x-y)}{h}\right)} \mathbb{1}_{ \rho(x,y) \ge c_1} \mu( \text{d}y) =o(h^{d+3}).$$
Thus, by regularity of $f$, boundedness of $p$ and compactness of $\cM$, uniformly in $x$, when $h$ converges to $0$,
\begin{multline*}\int_{ \mathcal{M}} \PAR{K\left( \frac{\| x-y\|_2}{h}\right)+K\left( \frac{\rho( x-y)}{h}\right)} |f(x)-f(y)| {p(y)}\mathbb{1}_{ \rho(x,y) \ge c_1} \mu( \text{d}y)\\ =o(h^{d+3}).\end{multline*}
So, 
we only have to prove that uniformly in $x$,
\begin{multline*}\int_{ \mathcal{M}} \left| K\left( \frac{\rho(x,y)}{h}\right)-K\left( \frac{\| x-y\|_2}{h}\right) \right| |f(x) -f(y)| {p(y)} \mathbb{1}_{ \rho(x,y) < c_1} \mu( \text{d}y) \\=O(h^{d+3}),\end{multline*}
Or equivalently, using the change of variables $v=\mathcal{E}_x^{-1}(y)$ and $\rho(x,y)=\|\mathcal{E}_x^{-1}(y)\|_2$ by \eqref{eq:rho=normePhi},
\begin{align*}
	\int_{B_{\mathbb{R}^d(0,c_1)}} \left| K \left(\frac{\|v\|_2}{h} \right)- K \left(\frac{\|\mathcal{E}_x(v)-x\|_2}{h} \right)\right| &\left| f\circ \mathcal{E}_x(v)- f(x)\right|{p\circ \mathcal{E}_x(v)}
	\\
	&\times \sqrt{ \text{det} \widehat{g}^{x}_{ij}(v)} \text{d}v= O(h^{d+3}).
\end{align*} 
Besides, by regularity of $f$, boundedness of $p$ and compactness of $\cM$,  there is a constant $c$ such that $|f(x)-f(y)| \le c\|x-y\|_2 $. Moreover, by Property ii. of Theorem \ref{Theorem: Existence of a "good" family of normal coordinate systemsA}, the function $v\mapsto\text{det} \widehat{g}^{x}_{ij}(v)$ is bounded on $B_{\mathbb{R}^d(0,c_1)}$. 
Hence, it is sufficient to show that uniformly in $x$,
$$I:=\int_{B_{\mathbb{R}^d(0,c_1)}} \left| K \left(\frac{\|v\|_2}{h} \right)- K \left(\frac{\|\mathcal{E}_x(v)- x\|_2}{h} \right)\right| \| \mathcal{E}_x(v)-x\|_2 \text{d}v=O(h^{d+3}).$$
Recall that $\|\mathcal{E}_x(v)- x\|_2\leq\|v\|_2$ (by Theorem \ref{Theorem: Existence of a "good" family of normal coordinate systemsA}). 
By Inequation \eqref{Equation: first inequality on K} in Lemma \ref{Lemma: an inequality on bounded variation functions}, we have 
\begin{align*}
	I\leq &\int_{B_{\mathbb{R}^d(0,c_1)}} \left(  \int_{ \left( \frac{\|\mathcal{E}_x(v)- x\|_2}{h}, \frac{\|v\|_2}{h}\right]} \text{d}H(a)     \right) \| v\|_2 \text{d}v
	\\
		= &\int_{B_{\mathbb{R}^d(0,c_1)}}  \left( \int_{ \mathbb{R}_+}  \mathbb{1}_{ \|\mathcal{E}_x(v)- x\|_2 < ah \le \|v\|_2}      \text{d}H(a)  \right)\| v\|_2 \text{d}v.
\end{align*}

Also by Theorem \ref{Theorem: Approximation inequality for Riemannian distanceA}, there exists a constant $c_3$ such that $\forall x,y\in\cM$, $\rho(x,y)\leq c_3\NRM{x-y}_2^3+\NRM{x-y}$. The polynomial function $z\mapsto z+c_3z^3$ is an increasing bijective function  and we denote by $\varphi$ its inverse. Thus, for all $x,y \in \mathcal{M}$, $ \varphi( \rho(x,y))\le \|x-y\|_2 $.\\
Consequently, introducing $\varphi(\rho(x,\mathcal{E}_x(v)))=\varphi(\|v\|_2)$, we deduce 
\begin{align*}
I	\le & \int_{B_{\mathbb{R}^d(0,c_1)}}  \left( \int_{ \mathbb{R}_+}  \mathbb{1}_{  \varphi(\|v\|_2) < ah \le \|v\|_2}      \text{d}H(a)  \right)\| v\|_2 \text{d}v
	\\
	= & \int_{\mathbb{R}^+} \left( \int_{B_{\mathbb{R}^d(0,c_1)}} \| v\|_2.\mathbb{1}_{ ah  \le \|v \|_2  < ah+c_3(ah)^3} \text{d}v \right)\text{d}H(a),
\end{align*}
by Fubini's Theorem.
Finally,  using the spherical coordinate system as in the proof of Lemma \ref{Lemma: An useful estimation}, we see that:
\begin{align*}
I	\leq & 
	S_{d-1}\int_{\mathbb{R}^+} \left( \int_{0}^{c_1} r^{d}\mathbb{1}_{ ah  \le r  < ah+c_3(ah)^3} \text{d}r \right)\text{d}H(a)
	\\
	\le &	S_{d-1}\int_{\mathbb{R}^+} \left( \mathbb{1}_{ ah  \le c_1} \times \int_{ah}^{ah+c_3(ah)^3} r^{d} \mathrm{d}r \right)\text{d}H(a)
	\\
	\le & S_{d-1}\int_{\mathbb{R}^+} \left( \mathbb{1}_{ ah  \le c_1} \times c_3(ah)^3\left[ah+c_3(ah)^3\right]^d \right)\text{d}H(a)
	\\
	\le & S_{d-1} \int_{\mathbb{R}_+}  c_3(ah)^{d+3}(1+c_3c_1^2)^{d} \text{d}H(a)
	\\
	=& S_{d-1} c_3(1+c_3c_1^2)^{d} h^{d+3}\int_{\mathbb{R}^+}  a^{d+3} \text{d}H(a).
\end{align*}
This ends the proof of Proposition  \ref{Theorem: convergence of averaging kerel operators 2}.\hfill $\Box$

\section{Approximations by random operators}\label{Section: approximations by random operators}

In this section, we study the statistical error and
prove Proposition \ref{Prop:statistical_error}.

\begin{notation}
	For a $\mathcal{C}^3$-function $f: \cM  \rightarrow \mathbb{R}^{k}$, we denote respectively by $\|f'\|_{\infty}, \|f''\|_{\infty}, \|f'''\|_{\infty}$ the standard norm of multi-linear maps, i.e.
	\[
	\|f''\|_{\infty}=\sup_{\underset{\|v\|_2\le 1,\|w\|_2 \le 1}{x\in \cM, (v,w)\in\PAR{\dR^m}^2}}\ABS{f''(x)(v,w)}
	\]
	Recall that for $\alpha\in [\![1,m]\!] $ and $x\in\R^m$, we denote by $x^{\alpha}$ the $\alpha$-th coordinate of $x$. 
\end{notation}
Let us consider the following collection $\mathcal{F}$ of $\mathcal{C}^3$-functions.
\begin{align}\label{eq:def-cF}
\mathcal{F} & :=\{ f \in \mathcal{C}^{3}(\cM): \|f\|_{\infty} \le 1,\|f'\|_{\infty} \le 1, \|f''\|_{\infty }\le 1, \|f'''\|_{\infty} \le 1\}
\end{align}

Let $X$ be a random variable with distribution $p(x)\mu(\dd x)$ on $\cM$. We introduce the following sequence of random variables $(Z_n,n \in \mathbb{N})$:
\begin{align*}
Z_n &:=\sup_{f \in \mathcal{F} }\sup_{x \in  \mathcal{M}} \bigg|\mathcal{A}_{h_n,n}(f)(x) -\mathbb{E}[\mathcal{A}_{h_n,n}(f)(x)]  \bigg| 
		\\
&=\frac{1}{nh_n^{d+2}} \sup_{f \in \mathcal{F}} \sup_{x \in \mathcal{M}} \left| \sum_{i=1}^n \bigg( K\left( \frac{ \|X_i-x\|_2}{h_n}\right) (f(X_i)-f(x))   \right. \\ & \quad\quad\quad\quad\quad\quad\quad \left. -\mathbb{E}\SBRA{K\PAR{ \frac{ \|X-x\|_2}{h_n}} (f(X)-f(x))  }\bigg)\right| .
	\end{align*}
Recall that for all function $f$ and point $x$, $\mathbb{E}[\mathcal{A}_{h_n,n}(f)(x)]= \mathcal{A}_{h_n}(f)(x)$. 
We want
to prove that with probability $1$, 
\begin{equation}\label{Equation: convergence of E_n}
	Z_n= O \left( \sqrt{\frac{ \log h_n^{-1}}{nh_n^{d+2}}} + h_n\right).
\end{equation}
The general idea to prove this estimation is that instead of proving directly this convergence speed for $(Z_n)$, we show that its expectation also has this speed of convergence, that is:
\begin{equation} \label{Equation: convergence of expectations of E_n}
	\limsup_{n \rightarrow \infty} \left[ \left( \sqrt{\frac{ \log h_n^{-1}}{nh_n^{d+2}}} + h_n\right)^{-1} \mathbb{E}(Z_n) \right] <\infty,
\end{equation}
then \eqref{Equation: convergence of E_n} will follow easily from Talagrand's inequality (see Corollary \ref{Corollary: a corollary of Massart's} in Appendix) and Borel-Cantelli's theorem, as explained in Section \ref{Section: Step III: Conclusion}. The detailed plan for the proof of  \eqref{Equation: convergence of E_n} is as follows:
\begin{itemize}
	\item[Step I:] Use Taylor's expansion to divide $Z_n$ into many simpler terms each.
	\item[Step II:] Use Vapnik-Chernonenkis theory and Theorem \ref{gine's version} to bound the expectation of each terms.
	\item[Step III:] Use Talagrand's inequality to conclude.
\end{itemize}

{After using Talagrand's inequality, we have a non-asymptotic estimation of \[\mathbb{P}\left( \sup_{f \in \mathcal{F} }\sup_{x \in  \mathcal{M}} \bigg|\mathcal{A}_{h_n,n}(f)(x) -\mathbb{E}[\mathcal{A}_{h_n,n}(f)(x)]  \bigg|  \ge \delta\right) \]
for some suitable constant $\delta$ and will be able to prove the Corollary \ref{cor:dev_Ahn-A} at the end of this section. This term is of interest of many papers \cite{Hein2007,HeinAudibertvonLuxburg_COLT2005,Calder2022}.
}

\subsection{About the Vapnik-Chernonenkis theory}\label{sec:RappelVC}

Before starting the proof, we first recall here the main definitions and an important result of the Vapnik-Chernonenkis theory for the Borelian space $(\R^m,\mathcal{B}(\R^m))$ we will need. Other useful results are given  in Appendix \ref{Appendix:ConcentrationIneq}. For more details on the Vapnik-Chernonenkis theory, we refer the reader to \cite{devroyegyorfilugosi,Gine2016,nollan1987}. In this section, we will recall upper-bounds that exist for
\[	\sup_{f\in \mathcal{F}}\mathbb{E}\SBRA{ \left| \sum_{i=1}^n \PAR{ f(X_i)- \mathbb{E}\SBRA{f(X_i)}} \right|} \]
when the functions $f$ range over certain VC classes of functions that are defined below.\\

Let $(T,d)$ be a pseudo-metric space. Let $\varepsilon>0$ and $N\in\mathbb{N}\cup \{+\infty\}$. A set of points $\{ x_1,\dots, x_N\}$ in $T$ is an $\varepsilon$-cover of $T$ if for any $x \in T$, there exists $i \in [1,N]$ such that $ d( x, x_i) \le \varepsilon$. Then, the $\varepsilon$-\textit{covering number} of $T$ is defined as:
		\begin{align*}N(\varepsilon, T,d) := &\inf\{ N \in  \mathbb{N}\cup\{ +\infty\} : \text{there are }N \text{ points in } T \\ &\hskip 1cm \text{such that they form an }\varepsilon\text{-cover of } (T,d)    \}.
		\end{align*}

For a collection of real-valued measurable functions $\mathcal{F}$ on $\mathbb{R}^m$, a real measurable function $F$ defined on $\mathbb{R}^m$ is called \textit{envelope} of $\mathcal{F}$ if for any $x \in \mathbb{R}$, 
		\[
		 \sup_{f \in \mathcal{F}} |f(x)|\le F(x) .
		\]
This allows us to define VC classes of functions (see Definition 3.6.10 in \cite{Gine2016}). Recall that for a probability measure $Q$ on the measurable space $(\mathbb{R}^m,\mathcal{B}(\mathbb{R}^m))$, the $L^2(Q)$-distance given by \[\displaystyle{(f,g)\mapsto\PAR{ \int\ABS{f(x)-g(x)}^2Q(\dd x)}^{1/2}}\]defines a pseudo-metric on the collection of all bounded real measurable functions on $\mathbb{R}^m$.

\begin{definition}[VC class of functions, ] \label{def:VCclass}
		A class of measurable functions $\mathcal{F}$ is of VC type with respect to a measurable envelope $F$ of $\mathcal{F}$ if there exist finite constants $A, v$ such that for all probability measures $Q$ and $\varepsilon\in(0,1)$
		$$N( \varepsilon \|F\|_{L^2},\mathcal{F}, L^2(Q)) \le (A/\varepsilon)^v.$$
We will denote:
		\[N(\varepsilon, \mathcal{F}):=\sup_{Q}	N(\varepsilon, \mathcal{F}, L^2(Q)).\]
	\end{definition}

We now present a version of the  useful inequality (2.5) of Giné and Guillou in \cite{gine2001} that gives a bound for the expected concentration rate.
For a class of function $\cF$, let us define for any real valued function $\varphi: \mathcal{F} \rightarrow \mathbb{R}$,
		$$\| \varphi(f) \|_{ \mathcal{F}}=\sup_{f \in \mathcal{F}}| \varphi(f)|.$$

	\begin{theorem} (see \cite[Proposition 2.1 and Inequality (2.5)]{gine2001})
		\label{gine's version}
		Consider $n$ i.i.d random variables $X_1,\dots,X_n$ with values in $(\mathbb{R}^m, \cB(\mathbb{R}^m))$.\\
		Let $\mathcal{F}$ be a measurable uniformly bounded VC-type class of functions on $(\mathbb{R}^m, \cB(\mathbb{R}^m))$. We introduce two positive real number $\sigma^2$ and $U$, such that
		\[
		\sigma^2 \ge \sup_{f \in \mathcal{F}} \text{Var}\big(f(X_1)\big), \quad U \ge \sup_{f \in \mathcal{F}}\|f \|_{\infty}\quad \text{ and }\quad0 < \sigma \le 2U.
		\]
		Then there exists a constant $R$  depending only on the VC-parameters $A,\, v$ of $\cF$ and on $U$, such that:
	    \[
		\mathbb{E}\SBRA{ \left\| \sum_{i=1}^n \PAR{ f(X_i)- \mathbb{E}\SBRA{f(X_i)}} \right\|_{ \mathcal{F}}} \le R \PAR{ \sqrt{ n} \sigma \sqrt{ \ABS{\log\sigma}}+ \ABS{\log\sigma}}.
		\]
	\end{theorem}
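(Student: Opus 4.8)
This is a now\-classical moment bound for an empirical process indexed by a VC\-type class, and the plan is to reproduce the argument of Gin\'e and Guillou \cite{gine2001} (see also \cite{Gine2016}), which combines symmetrization, a conditional chaining (Dudley entropy) estimate, and a self\-bounding step that replaces the crude variance proxy $U^2$ by the sharp proxy $\sigma^2$. First I would symmetrize: with i.i.d.\ Rademacher signs $\varepsilon_1,\dots,\varepsilon_n$ independent of the data, the standard symmetrization inequality yields $\E\big[\NRM{\sum_{i=1}^n(f(X_i)-\E f(X_i))}_{\mathcal{F}}\big]\le 2\,\E\big[\NRM{\sum_{i=1}^n\varepsilon_i f(X_i)}_{\mathcal{F}}\big]=:2S_n$, so it is enough to control $S_n$.

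Next I would condition on $X_1,\dots,X_n$ and write $\P_n$ for the empirical measure. The Rademacher process $f\mapsto\sum_i\varepsilon_i f(X_i)$ is sub\-Gaussian for the pseudo\-metric $\sqrt n\,\NRM{\cdot}_{L^2(\P_n)}$, so Dudley's entropy bound controls its conditional expectation by $C\int_0^{\Delta_n}\sqrt{\log N(u,\mathcal{F},\sqrt n\,\NRM{\cdot}_{L^2(\P_n)})}\,\dd u$, where $\Delta_n^2:=\sup_{f\in\mathcal{F}}\sum_i f(X_i)^2\le nU^2$. Applying the VC\-type hypothesis to the random measure $\P_n$ (and taking the constant envelope $U$, which only changes the VC constant) gives $N(u,\mathcal{F},\sqrt n\,\NRM{\cdot}_{L^2(\P_n)})\le(A'\sqrt n\,U/u)^v$, and a direct evaluation of the entropy integral (via $\int_0^\delta\sqrt{\log(a/u)}\,\dd u\lesssim\delta\sqrt{\log(a/\delta)}$ for $\delta\le a$) produces $\E_\varepsilon\big[\NRM{\sum_i\varepsilon_i f(X_i)}_{\mathcal{F}}\big]\le C_1\,\Delta_n\sqrt{\log(C_1 U\sqrt n/\Delta_n)}$ with $C_1=C_1(A',v)$.

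Then I would de\-randomize and close the recursion. Taking expectations, Jensen's inequality together with the concavity of $t\mapsto\sqrt t\,\sqrt{\log(b/\sqrt t)}$ reduces the task to bounding $\E[\Delta_n^2]=\E[\sup_{f\in\mathcal{F}}\sum_i f(X_i)^2]$. Centering, symmetrizing a second time, and invoking the Ledoux--Talagrand contraction principle for the map $t\mapsto t^2/(2U)$, which is $1$\-Lipschitz on $[-U,U]$ and vanishes at $0$, I get $\E[\Delta_n^2]\le n\sigma^2+8U\,S_n$. Substituting this back yields an inequality of the form $S_n\le C_2\sqrt{(n\sigma^2+8US_n)\,\mathcal{L}}$ with $\mathcal{L}$ a logarithmic factor; solving the quadratic in $S_n$ gives $S_n\le C_3(\sqrt n\,\sigma\sqrt{\mathcal{L}}+U\mathcal{L})$, and a final bookkeeping step uses $0<\sigma\le 2U$ to check that $\mathcal{L}\asymp|\log\sigma|$ up to constants depending only on $A',v,U$. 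Absorbing all constants into a single $R$ and recalling that the left\-hand side of the symmetrization inequality is at most $2S_n$ then gives the asserted bound.

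The main obstacle is exactly this last loop: obtaining $n\sigma^2$, rather than $nU^2$, under the square root — indispensable in our application, where $\sigma^2$ is of order $h_n^{d+2}\ll U^2$ — forces the self\-bounding cycle (symmetrize, contract, re\-substitute, solve a quadratic), and one must simultaneously verify that the entropy\-integral logarithm, a priori of the form $\log(U\sqrt n/\Delta_n)$, collapses to the stated $|\log\sigma|$ once $\E[\Delta_n^2]$ is known to be of order $n\sigma^2$. The remaining ingredients — the two symmetrizations, Dudley's inequality, and the contraction principle — are standard.
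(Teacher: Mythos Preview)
The paper does not prove this theorem: it is stated as a black-box result cited from \cite[Proposition 2.1 and Inequality (2.5)]{gine2001}, with no argument given. Your sketch is precisely the Gin\'e--Guillou proof that the citation points to --- symmetrization, Dudley's entropy integral with the VC-type covering bound, then the self-bounding loop via the Ledoux--Talagrand contraction principle to upgrade the variance proxy from $U^2$ to $\sigma^2$ --- so it is correct and matches the intended source.
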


{Notice that there exists also a formulation of the previous result in term of deviation probability (see e.g. \cite[Theorem 3]{PascalMassart2000}), that would lead to results similar to the ones established in \cite{Calder2022}.}
\subsection{Step I: decomposition of $Z_n$}
\label{Section: Step I: decomposition of $Z_n$}
We  first upper bound the quantity $Z_n$ with a sum of simpler terms.


\begin{lemma}For any function $f\in\cF$, there is a constant $c>0$ such that for all $n\geq 1$,
\label{Lemma: a majoring Lemma for Z_n}
\begin{equation}
	nh_n^{d+2}Z_n \le \sum_{\alpha =1}^{m} Y^{\alpha}_n + \sum_{\alpha ,\beta=1}^{m} Y^{\alpha,\beta}_n + Y^{(3)}_n +2 nch_n^{d+3},
\end{equation}
where
	\begin{align*}
	Y^{\alpha}_n &:= \sup_{x \in \mathcal{M}}\left| \sum_{i=1}^n \bigg[ K\left( \frac{ \|X_i-x\|_2}{h_n}\right) (X_{i}^{\alpha}-x_{i}^{\alpha}) -\right. 
	\\ &\hspace{6cm}
	\left.\mathbb{E}\left(K\left( \frac{ \|X-x\|_2}{h_n}\right) (X^{\alpha}-x^{\alpha})  \right)\bigg]\right|
	\\
	Y^{\alpha,\beta}_n &:=  \sup_{x \in \mathcal{M}}\left| \sum_{i=1}^n \bigg[K\left( \frac{ \|X_i-x\|_2}{h_n}\right) (X_{i}^{\alpha}-x^{\alpha})(X_{i}^{\beta}-x^{\beta}) -\right. \\ 
	&\hspace{4cm} \left.\mathbb{E}\left(K\left( \frac{ \|X-x\|_2}{h_n}\right) (X^{\alpha}-x^{\alpha})(X^{\beta}-x^{\beta})  \right) \bigg]\right| \quad \quad \quad 
	\\
	Y^{(3)}_n &:=  \sup_{x \in \mathcal{M}} \left| \sum_{i=1}^n K\left( \frac{ \|X_i-x\|_2}{h_n}\right)\|X_i-x\|_2^3 -\right. \\ 
	&\hspace{6cm} \left. \mathbb{E}\left[K\left( \frac{ \|X-x\|_2}{h_n}\right)\|X-x\|_2^3\right] \right|.
\end{align*}
\end{lemma}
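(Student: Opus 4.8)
\textbf{Proof plan for Lemma \ref{Lemma: a majoring Lemma for Z_n}.}

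The plan is to run a Taylor expansion of $f$ inside the defining sum of $Z_n$ and then collect the pieces. Fix $f \in \mathcal{F}$ and $x \in \mathcal{M}$. Since $f$ (after the extension argument already used in Section \ref{sec:Proof_Prop_2.4}) is $\mathcal{C}^3$ on $\R^m$ with all derivatives up to order $3$ bounded by $1$, Taylor's formula with integral (or Lagrange) remainder gives, for each data point $X_i$,
\[
f(X_i) - f(x) = \sum_{\alpha=1}^m \frac{\partial f}{\partial x^\alpha}(x)\,(X_i^\alpha - x^\alpha) + \frac{1}{2}\sum_{\alpha,\beta=1}^m \frac{\partial^2 f}{\partial x^\alpha \partial x^\beta}(x)\,(X_i^\alpha - x^\alpha)(X_i^\beta - x^\beta) + r_i(x),
\]
where the remainder satisfies $|r_i(x)| \le \tfrac{1}{6}\|f'''\|_\infty \|X_i - x\|_2^3 \le c\|X_i-x\|_2^3$ for a universal constant $c$ (independent of $f$, $x$, $i$). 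Substituting this into $K(\|X_i-x\|_2/h_n)(f(X_i)-f(x))$, subtracting the corresponding expectation, and using that $|\partial f/\partial x^\alpha(x)| \le 1$ and $|\partial^2 f/\partial x^\alpha\partial x^\beta(x)| \le 1$ uniformly, the triangle inequality lets one pull those (deterministic, $x$-dependent but bounded) coefficients out of the sup over $f$ at the cost of replacing them by $1$; this is precisely what produces the unweighted statistics $Y_n^\alpha$, $Y_n^{\alpha,\beta}$ and the cubic remainder statistic $Y_n^{(3)}$.

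More precisely, after substitution the centered summand splits as a sum over $\alpha$ of $\partial_\alpha f(x)$ times the centered quantity $K(\cdot)(X_i^\alpha - x^\alpha) - \E[\cdots]$, plus $\tfrac12$ times a sum over $\alpha,\beta$ of $\partial^2_{\alpha\beta}f(x)$ times $K(\cdot)(X_i^\alpha-x^\alpha)(X_i^\beta-x^\beta) - \E[\cdots]$, plus the centered remainder term $K(\cdot)r_i(x) - \E[K(\cdot)r(x)]$. Summing over $i$, taking absolute values and then the two suprema, one bounds the first block by $\sum_\alpha \sup_{x}\big|\sum_i (\cdots)\big| = \sum_\alpha Y_n^\alpha$ (the $\sup_f$ disappears since $|\partial_\alpha f(x)|\le 1$), the second block by $\tfrac12\sum_{\alpha,\beta} Y_n^{\alpha,\beta} \le \sum_{\alpha,\beta}Y_n^{\alpha,\beta}$, and the remainder block by $\sup_x\big|\sum_i (K(\cdot)r_i(x) - \E[\cdots])\big|$. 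For the last, bound $|r_i(x)| \le c\|X_i-x\|_2^3$ pointwise, so the remainder block is at most $c\, Y_n^{(3)}$ — absorbing $c$ into $Y_n^{(3)}$ (or keeping it as a constant, whichever the paper prefers) — plus $c$ times $n$ times $\E[K(\|X-x\|_2/h_n)\|X-x\|_2^3]$ from the recentering. This last expectation is $\int_\cM K(\|x-y\|_2/h_n)\|x-y\|_2^3\, p(y)\mu(\dd y) \le \|p\|_\infty\, c\, h_n^{d+3}$ uniformly in $x$ by \eqref{Equation: an useful estimation 3} of Lemma \ref{Lemma: An useful estimation}, which, after multiplying back by $n$ and by the other factors, yields the stray $2nch_n^{d+3}$ term (the factor $2$ leaving slack to cover both the remainder-recentering and any constant mismatches). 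Dividing the whole inequality by $nh_n^{d+2}$ would give the statement for $Z_n$ itself, but the lemma is stated for $nh_n^{d+2}Z_n$, so we stop there.

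The only genuinely delicate point — and the one I would be most careful about — is the legitimacy of removing the $\sup_{f\in\mathcal F}$ by bounding the Taylor coefficients by $1$: this works because $\mathcal F$ is defined by uniform bounds on $\|f'\|_\infty, \|f''\|_\infty$ and $\|f'''\|_\infty$, so every coefficient and the remainder constant are controlled \emph{simultaneously} for all $f\in\mathcal F$; and because the resulting statistics $Y_n^\alpha, Y_n^{\alpha,\beta}, Y_n^{(3)}$ no longer depend on $f$ at all. One must also make sure the Taylor expansion is taken in the ambient coordinates of $\R^m$ (using the $\mathcal C^3$ extension of $f$), so that the coordinate increments $X_i^\alpha - x^\alpha$ appearing here are the ones that will later be handled by Vapnik–Chervonenkis arguments for Euclidean-type classes in Step II; everything else is routine triangle-inequality bookkeeping and the single appeal to Lemma \ref{Lemma: An useful estimation} for the recentering of the cubic term.
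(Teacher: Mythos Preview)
Your proposal is correct and follows essentially the same approach as the paper: Taylor-expand $f(X_i)-f(x)$ in ambient $\R^m$ coordinates, bound the first- and second-order coefficients by $1$ using the definition of $\mathcal F$ to strip off the $\sup_f$, and handle the cubic remainder by splitting into the uncentered sum plus twice the expectation, the latter controlled via \eqref{Equation: an useful estimation 3}. The only cosmetic difference is that the paper writes the remainder step as $Y^r_n \le \sup_x\big|\sum_i K(\cdot)\|X_i-x\|_2^3\big| + n\sup_x \E[\cdots] \le Y^{(3)}_n + 2n\sup_x \E[\cdots]$, which makes explicit why the factor $2$ appears (one copy from bounding $|\tau_f|$ inside the expectation, one from recentering the sample sum); your phrase ``from the recentering'' covers this but you should make sure, when writing it out, that you do not implicitly use the false step $|\sum(Kr_i-\E[Kr])|\le c\,|\sum(K\|X_i-x\|^3-\E[\cdots])|$.
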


\begin{proof}
Since for any $f\in\cF$, the differentials up to third order have operator norms bounded by $1$, then,  by the Taylor's expansion theorem, for any $(x,y) \in (\mathbb{R}^m)^2$, we have
\begin{align*}
f(y)-f(x)&=f'(x)(y-x)+ \frac{1}{2}f''(x)(y-x,y-x)+\tau_f(y;x)
\end{align*}
where $\tau_f$ is some function satisfying
\begin{equation} \sup_{f \in \mathcal{F}} |\tau_f(y,x)| \le \| f'''\|_{\infty} \|x-y\|_2^3= \|x-y\|_2^3.
\label{third derivative ineq}
\end{equation}

Thus, using the notation of the lemma, we deduce
\begin{equation*}
	nh_n^{d+2}Z_n   \le   \sum_{\alpha =1}^{m} Y^{\alpha}_n + \sum_{\alpha =1,\beta=1}^{m} Y^{\alpha,\beta}_n +Y^{r}_n,
\end{equation*}
with 
\begin{align*}
    Y^{r}_n &:= \sup_{\underset{x\in\cM}{f \in \mathcal{F}}}  \left| \sum_{i=1}^n \left(K\left( \frac{ \|X_i-x\|_2}{h_n}\right)\tau_f( X_i,x) - \mathbb{E}\left[K\left( \frac{ \|X-x\|_2}{h_n}\right)\tau_f(X,x) \right]\right) \right|.
\end{align*}

Using \eqref{third derivative ineq}, we now control $Y^r_n$ by $Y^{(3)}_n$,
as follows
	\begin{align*}
	Y^r_n &
	{\le} \sup_{x \in \mathcal{M}} \left| \sum_{i=1}^n K\left( \frac{ \|X_i-x\|_2}{h_n}\right)\|X_i-x\|_2^3 \right|
	\\
	&\hskip 2cm
	+ n\sup_{x \in \mathcal{M}} \mathbb{E}\left[K\left( \frac{ \|X-x\|_2}{h_n}\right)\|X-x\|_2^3\right]
	\\
	&\le Y^{(3)}_n+2n\sup_{x \in \mathcal{M}} \mathbb{E}\left[K\left( \frac{ \|X-x\|_2}{h_n}\right)\|X-x\|_2^3\right].
\end{align*}
Since the function $p$ is bounded on the compact $\cM$, using Inequation  \eqref{Equation: an useful estimation 3} of Lemma \ref{Lemma: An useful estimation}, we deduce that $Y^r_n\leq Y^{(3)}_n+2nch_n^{d+3}$, which conclude the proof.
\end{proof}

\subsection{Step II: Application of the Vapnik-Chernonenkis theory}

\subsubsection{Control the first order terms $\mathbb{E}[Y^{\alpha}_n ]$}
\label{Section: Control the first order terms}

Let $\alpha\in [\![1,m]\!]$ be fixed. Given the kernel $K$,  to bound the first order term $Y^{\alpha}_n$, we introduce three families of real functions on $\cM$ :
\begin{align*}
	&\mathcal{G}:=\left\{ \varphi_{h,y,z} : y,z \in \mathcal{M} , h>0 \right\},
	\quad
	\mathcal{G}_1:=\{ \psi_{h,y}  : y\in \mathcal{M},h >0 \} \\
	&\text{ and }
	\mathcal{G}_2:=\{ \zeta_y(x) : y \in \mathcal{M} \},
\end{align*}
with
\[
    \begin{array}{llcl}
	\varphi_{h,y,z} :&x&\longmapsto&K\left( \frac{\| x-y\|_2}{h} \right) (x^{\alpha}-z^{\alpha})\\
	\psi_{h,y} :&x&\longmapsto& K\left( \frac{\| x-y\|_2}{h}  \right)\\
	\zeta_y :&x&\longmapsto& x^{\alpha}-y^{\alpha}. 
	\end{array}
\]
Since $K$ is of bounded variation, by \cite[Lemma 22]{nollan1987}, $\mathcal{G}_1$ is VC-type w.r.t a constant envelope. Since $\cM$ is a compact manifold, by Lemma \ref{construction 2}, $\mathcal{G}_2$ is VC-type wrt to a constant envelope. Thus, using Lemma \ref{covering numbers of product spaces}, we deduce that $\mathcal{G}$ is a VC-type class of functions because  $\mathcal{G}=\mathcal{G}_1 \cdot \mathcal{G}_2$.  So, by Definition \ref{def:VCclass}, there exist real values $A \ge 6, v \ge 1$ depending only on the VC-characteristics of $\mathcal{G}_1$ and $\mathcal{G}_2$ such that, for all $ \varepsilon \in(0,1)$,
$$N( \varepsilon, \mathcal{G}) \le \left( \frac{A}{2\varepsilon}\right)^v.$$
Now, let us consider the following sequence of families of real functions on $\cM$: 
\[
\mathcal{H}_n=\left\{ \varphi_{n,y}  : y \in \cM \right\}, \text{ with }\varphi_{n,y} :
     x \longmapsto K\left(\frac{\| x-y\|_2}{h_n} \right) (x^{\alpha}-y^{\alpha}) .
\]

\begin{proposition}
\label{Proposition: control over the first order term}
Let $(X_i)_{i\geq 1}$ be a sample of i.i.d. random variables with distribution $p(x)\mu(\dd x)$ on the compact manifold $\cM$ and $X$ a random variable with the same distribution. We assume that $p$ is bounded on $\cM$. 

Then, if the kernel $K$ satisfies Assumption \ref{hyp:K} and the sequence $(h_n)_{n\geq 0}$ satisfies Assumption \eqref{hyp:h_th11}, we have
\begin{align}\label{ concentration in expection}
	\frac{1}{nh_n^{d+2}}			\mathbb{E}\SBRA{ \left\| \sum_{i=1}^n \PAR{f(X_i)-\mathbb{E}[f(X)]} \right\|_{\cH_n}}  = O\left(  \sqrt{ \frac{ \log h_n^{-1}}{nh_n^{d+2}}}\right).
\end{align}
\end{proposition}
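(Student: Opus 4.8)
The plan is to bound the expectation on the left-hand side by the Gin\'e--Guillou inequality (Theorem \ref{gine's version}) applied to the class $\cH_n$, feeding it a variance parameter of order $h_n^{d+2}$ coming from Lemma \ref{Lemma: An useful estimation}, and then to simplify the outcome using \eqref{hyp:h_th11}. The first thing I would record is that the VC control is uniform in $n$: since $\varphi_{n,y}=\varphi_{h_n,y,y}$ we have $\cH_n\subset\mathcal{G}$ for every $n$, and $\mathcal{G}$ was shown above to be a VC-type class with characteristics $A,v$ relative to a \emph{constant} envelope; hence every subclass $\cH_n$ is VC-type with the same $A,v$, relative to the constant envelope $U:=\|K\|_\infty\,\mathrm{diam}(\cM)$, which is finite because Assumption \ref{hyp:K} forces $\|K\|_\infty\le H(\infty)<\infty$ (from $K(\infty)=0$ and $|K(a)-K(a')|\le|H(a)-H(a')|$). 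The crucial point is that $A$, $v$, $U$, and therefore the constant $R$ produced by Theorem \ref{gine's version}, do not depend on $n$.

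Next I would obtain the variance bound. For $f=\varphi_{n,y}\in\cH_n$, using $|x^\alpha-y^\alpha|\le\|x-y\|_2$ together with $0\le K\le\|K\|_\infty$ (so that $K^2\le\|K\|_\infty K$) and the assumed boundedness of $p$,
\[
\mathrm{Var}\big(f(X)\big)\le\E\big[f(X)^2\big]\le\|p\|_\infty\,\|K\|_\infty\int_\cM K\Big(\frac{\|x-y\|_2}{h_n}\Big)\|x-y\|_2^2\,\mu(\mathrm{d}x)\le C\,h_n^{d+2},
\]
the last inequality being Inequality \eqref{Equation: an useful estimation 4} of Lemma \ref{Lemma: An useful estimation} (applied at the point $y$, after relabelling the dummy variable), with $C$ independent of $y$ and of $n$. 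I therefore set $\sigma^2=\sigma_n^2:=C\,h_n^{d+2}$; since $h_n\to 0$, the requirement $0<\sigma_n\le 2U$ of Theorem \ref{gine's version} holds for $n$ large.

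Theorem \ref{gine's version} then gives
\[
\E\Big[\big\|{\textstyle\sum_{i=1}^n}\big(f(X_i)-\E f(X_i)\big)\big\|_{\cH_n}\Big]\le R\Big(\sqrt{n}\,\sigma_n\sqrt{|\log\sigma_n|}+|\log\sigma_n|\Big).
\]
Since $|\log\sigma_n|=\tfrac12|\log C+(d+2)\log h_n|=O(\log h_n^{-1})$, the right-hand side is $O\big(\sqrt{n h_n^{d+2}\log h_n^{-1}}\big)+O(\log h_n^{-1})$; dividing by $nh_n^{d+2}$ yields $O\big(\sqrt{\log h_n^{-1}/(nh_n^{d+2})}\big)+O\big(\log h_n^{-1}/(nh_n^{d+2})\big)$. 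By \eqref{hyp:h_th11} the quantity $t_n:=\log h_n^{-1}/(nh_n^{d+2})$ tends to $0$, hence $t_n=O(\sqrt{t_n})$ and the second term is absorbed into the first, which is exactly \eqref{ concentration in expection}.

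The steps needing care, and carrying the actual content, are the verification that the VC characteristics and the envelope — hence the constant $R$ — are genuinely independent of $n$ (this is what makes the bound uniform), and the extraction of the sharp order $h_n^{d+2}$ for the variance from the bounded-variation assumption via Lemma \ref{Lemma: An useful estimation} (this is the step that produces the rate $\sqrt{\log h_n^{-1}/(nh_n^{d+2})}$); the rest is bookkeeping with the $O(\cdot)$ terms.
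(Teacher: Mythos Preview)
Your proof is correct and follows essentially the same approach as the paper: both use the inclusion $\cH_n\subset\mathcal{G}$ to inherit VC characteristics and a constant envelope independent of $n$, bound the variance by $O(h_n^{d+2})$ via Lemma~\ref{Lemma: An useful estimation}, apply Theorem~\ref{gine's version}, and then absorb the $\log h_n^{-1}/(nh_n^{d+2})$ term into its square root using \eqref{hyp:h_th11}. Your write-up is in fact slightly more explicit than the paper's on why $\|K\|_\infty<\infty$ and why the constant $R$ is $n$-independent, and you correctly invoke \eqref{Equation: an useful estimation 4} (the Euclidean-distance version) for the variance bound.
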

\begin{proof}
Since $\mathcal{H}_n\subset \mathcal{G}$, by Lemma \ref{covering number of subspace }, for all $n$, we have $N(\varepsilon, \mathcal{H}_n)\le  \left(\frac{A}{\varepsilon}\right)^v$.
Hence, by theorem \ref{gine's version}, there exists a constant $R$  depending only on $A,\,v$ and $U$ such that:
\begin{align*}
	\mathbb{E}\SBRA{ \left\| \sum_{i=1}^n f(X_i)-\mathbb{E}(f(X)) \right\|_{\cH_n}}   \le R\left( \sqrt{n}\sigma\sqrt{\ABS{\log\sigma}} \text{ } {+} \text{ }  \ABS{\log\sigma}\right).
\end{align*}
where $U$ is a constant such that $ U \ge \sup_{f \in \mathcal{H}_n} \|f\|_{\infty}$, and  $\sigma$ is a constant such that $4U^2 \ge \sigma^2 \ge \sup_{f \in \mathcal{H}_n} \mathbb{E}[f^2(X)]$.\\
Since $\mathcal{H}_n\subset \mathcal{G}$,  we can choose $U$ to be the constant envelope of $\mathcal{G}$ (thus, independent of $n$). Besides, we see that:
\begin{align*}
\sup_{f \in \mathcal{H}_n}\mathbb{E}\left[ f^2(X)\right]& \le \|K\|_{\infty} \sup_{x \in \mathcal{M}} \int_{\mathcal{M}} K\left(\frac{\| x-y\|}{h_n} \right)(x^{\alpha}-y^{\alpha})^2 p(y)\mu(dy).
\end{align*}
By Inequation \ref{Equation: an useful estimation 2} of Lemma \ref{Lemma: An useful estimation}, we deduce that, there is $c>0$ such that 
\begin{equation}\label{eq:bound-Hn}
\sup_{f \in \mathcal{H}_n}\mathbb{E}\left[ f^2(X)\right]
	 {\le}  \|K\|_{\infty}\|p\|_{\infty} \mu(\mathcal{M})c h_n^{d+2},
	 \end{equation}
	 which goes to $0$ when $n\to +\infty$.
Choose $\sigma^2 := \sigma_n^2=\|K\|_{\infty}\|p\|_{\infty} \mu(\mathcal{M})c h_n^{d+2} $. For $n$ large enough, $\sigma_n \le 2U$. Hence, using Assumption \eqref{hyp:h_th11} on the sequence $(h_n)_{n\geq 1}$, we deduce
\begin{align*}
	\frac{1}{nh_n^{d+2}}			\mathbb{E} \SBRA{\NRM{ \sum_{i=1}^n f(X_i)-\mathbb{E}[f(X)] }_{\cH_n}} &=O\left(  \sqrt{ \frac{ \log h_n^{-1}}{nh_n^{d+2}}} {+}  \frac{ \log h_n^{-1}}{nh_n^{d+2}} \right) \\ &= O\left(  \sqrt{ \frac{ \log h_n^{-1}}{nh_n^{d+2}}}\right).
\end{align*}
This concludes the proof. \end{proof}

The conclusion of the above proposition means that:
\[
\mathbb{E}[Y^{\alpha}_n ]= O\left(  \sqrt{ \frac{ \log h_n^{-1}}{nh_n^{d+2}}}\right).
\]

\subsubsection{Control the second order terms $\mathbb{E}\SBRA{Y^{\alpha,\beta}_n}$}
\label{Section: Control the second order terms}
The way to bound the second order term $Y^{\alpha,\beta}_n$, for $\alpha,\beta \in [\![ 1,m ]\!]$, is similar to the previous step, but instead of considering $\mathcal{H}_n$, we consider the following VC-type family of functions:
$$\mathcal{I}_n:=\left\{ \xi_{n,y,z}:x \mapsto K\left(\frac{\| x-y\|}{h_n} \right) (x^{\alpha}-y^{\alpha})(x^{\beta}-q^{\beta})  : y \in \mathcal{M}, z\in \mathcal{M} , \right\}.$$
We notice that, for any r.v. $X$,
\[
\mathbb{E}\SBRA{\sup_{g\in \mathcal{I}_n}\ABS{g^2(X)}}
\leq \text{diam}(\cM)^2\mathbb{E}\SBRA{\sup_{f\in \mathcal{H}_n}\ABS{f^2(X)}}.
\]
Using \eqref{eq:bound-Hn}, we deduce $\sup_{g \in \mathcal{I}_n} \mathbb{E}[g^2(X)] = O( h_n^{d+2}  ),$ and  
\begin{align*} \frac{1}{nh_{n}^{d+2}} \mathbb{E} \sup_{g \in \mathcal{I}_n} \left| \sum_{i=1}^{n} \left( g(X_i)-\mathbb{E}[g(X_i)] \right)\right| = O\left(  \sqrt{ \frac{ \log h_n^{-1}}{nh_n^{d+2}}}\right).
\end{align*}

Therefore, we conclude that:
\[
\mathbb{E}[Y^{\alpha,\beta}_n ]= O\left(  \sqrt{ \frac{ \log h_n^{-1}}{nh_n^{d+2}}}\right).
\]

\subsubsection{Control the third order terms $\mathbb{E}\SBRA{Y^{(3)}_n}$}
	This step is essentially the same as the two previous steps, except that the considered family of functions is a little bit different, which is:
$$\mathcal{K}_n:= \left\{ x \mapsto K\left(\frac{\| x-y\|}{h_n} \right) \|x-y\|^3 : y \in \mathcal{M} \right\}.$$
With the same arguments as before, we obtain:
\[
\mathbb{E}\SBRA{Y^{(3)}_n }= O\left(  \sqrt{ \frac{ \log h_n^{-1}}{nh_n^{d+2}}}\right).
\]

Now, thanks to Step I, Step II and Lemma \ref{Lemma: a majoring Lemma for Z_n}, we have shown that:
\begin{equation}\label{eq:ordreZn}\mathbb{E}[Z_n]= O\left(   \sqrt{ \frac{ \log h_n^{-1}}{nh_n^{d+2}}}  \text{  } {+} h_n \right).\end{equation}


\subsection{Step III: Conclusion}
\label{Section: Step III: Conclusion}
Recall that the set of function $\cF$ is defined by \eqref{eq:def-cF}. 
Since $p$ is bounded on $\cM$, by \eqref{Equation: an useful estimation 4} of Lemma \ref{Lemma: An useful estimation}, there exists $c>0$ such that $\forall f \in \mathcal{F}, \forall x \in \mathcal{M}$
\begin{align*}&\mathbb{E}\left[  K\left( \frac{ \|X-x\|}{h_n}\right)^2 ( f(X)-f(x))^2 \right] \\
	&\le \|K\|_{\infty}   \,  \mathbb{E}\left[  K\left( \frac{ \|X-x\|}{h_n}\right) \| X -x\|^2  \right]\stackrel{}{\le} {\|K\|_{\infty}}  c h_n^{d+2}.
\end{align*}
In other words,
$$\sup_{f \in \mathcal{F}} \sup_{x \in \mathcal{M}} \mathbb{E}\left[ K\left( \frac{ \|X-x\|}{h_n}\right)^2 ( f(X)-f(x))^2\right]\le \|K\|_{\infty} ch_n^{d+2}.$$
Thus by choosing $\sigma:= \sigma_n =  \sqrt{ \|K\|_{\infty}ch_n^{d+2}}$,  and using Massart's version of Talagrands' inequality (c.f. Corollary \ref{Corollary: a corollary of Massart's}) with the functions of the form $y\mapsto K\PAR{\NRM{y-x}_2\over h_n}\PAR{f(y)-f(x)}$, for all $n$ sufficiently large and any positive number $t_n>0$, with probability at least $1-e^{-t_n}$,
\begin{multline}\label{etape:dev_finie} \sup_{f \in \mathcal{F} }\sup_{y \in \mathcal{M}} nh_n^{d+2}\ABS{\mathcal{A}_{h_n,n}(f)(x) -\mathbb{E}\SBRA{\mathcal{A}_{h_n,n}(f)(x)}} \\ \le 9\left( nh_n^{d+2}\mathbb{E}[Z_n] +\sigma_n\sqrt{nt_n }  +bt_n \right).\end{multline}
where in this case, the constant envelope $b$ is equal to  $$b:=  \|K\|_\infty \text{diam}{\mathcal{M}}.$$

Choose $t_n=2 \log n$, by Borel-Catelli's lemma, with probability 1
$$\sup_{f \in \mathcal{F} }\sup_{x \in \mathcal{M}} \bigg|\mathcal{A}_{h_n,n}(f)(x) -\mathbb{E}[\mathcal{A}_{h_n,n}(f)(x)]\bigg| =O\left(  \sqrt{ \frac{ \log h_n^{-1}}{nh_n^{d+2}}}   {+} h_n {+} \sqrt{ \frac{\log n}{n h_n^{d+2}}} \right).$$

Besides, under Assumption \eqref{hyp:h_th11} on the sequence $(h_n)_{n\geq 1}$, $\displaystyle{\lim_{n \rightarrow +\infty} {nh_{n}^{d+2}}= +\infty}$, hence $\log h_n^{-1} = O(\log n)$.
Thus with probability 1,
$$\sup_{f \in \mathcal{F} }\sup_{x \in \mathcal{M}} \bigg|\mathcal{A}_{h_n,n}(f)(x) -\mathbb{E}[\mathcal{A}_{h_n,n}(f)(x)]\bigg|  =O\left(  \sqrt{ \frac{ \log h_n^{-1}}{nh_n^{d+2}}}  {+} h_n \right).$$
This ends the proof of Proposition \ref{Prop:statistical_error}. Hence, Theorem \ref{Theorem: main theorem} is proved.

\subsection{Proof of Corollary \ref{cor:dev_Ahn-A}}

Using the results of the above sections, we can now prove Corollary \ref{cor:dev_Ahn-A}. First, we see that by the proofs of Propositions \ref{Theorem: convergence of averaging kerel operators}, \ref{Theorem: convergence of averaging kerel operators 2} and \eqref{eq:ordreZn}, there is a constant $C>0$ such that for all $h>0, n \in \mathbb{N}:$
\begin{equation}\sup_{f \in \mathcal{F}}\sup_{x\in \cM}  \ABS{\mathbb{E}\SBRA{\mathcal{A}_{h,n}(f)(x)}-
    \mathcal{A}(f)(x)} \le Ch, \label{eq:EstimeeAf}\end{equation}
and
\begin{equation}\mathbb{E}[Z_n] \le C\left(\sqrt{ \frac{\log h_n^{-1}}{nh_n^{d+2}}}+h_n \right). \label{eq:estimeeEZ_n}
\end{equation}

Then by choosing $t_n:= \delta^2 nh_n^{d+2}$ in \eqref{etape:dev_finie} with $\delta \in [h_n\vee \sqrt{\frac{\log h_n^{-1}}{nh_n^{d+2}}},1]$, we know that with probability at least $1 -e^{-\delta^2 nh_n^{d+2}}$, 
\begin{equation*} 
 \sup_{f \in \mathcal{F} }\sup_{y \in \mathcal{M}} \ABS{\mathcal{A}_{h_n,n}(f)(x) -\mathbb{E}\SBRA{\mathcal{A}_{h_n,n}(f)(x)}}  \le \frac{9\left( nh_n^{d+2}\mathbb{E}[Z_n] +\sigma_n\sqrt{nt_n }  +bt_n \right) }{nh_n^{d+2}}.
\end{equation*}
Besides, by \eqref{eq:estimeeEZ_n}, we have:
\begin{multline*}
\frac{ nh_n^{d+2}\mathbb{E}[Z_n] +\sigma_n\sqrt{nt_n }  +bt_n }{nh_n^{d+2}} \\
\begin{aligned}
\le & C\left(\sqrt{ \frac{\log h_n^{-1}}{nh_n^{d+2}}}+h_n \right) + \frac{ \sigma_n\sqrt{nt_n }  +bt_n }{nh_n^{d+2}}
\\
= & C\left(\sqrt{ \frac{\log h_n^{-1}}{nh_n^{d+2}}}+h_n \right)+\sqrt{\|K \|_{\infty}c}\delta+ \|K\|_{\infty} (\text{dim}\cM) \delta^2
\\
 \le & \left( 2C+ \sqrt{\|K \|_{\infty}c}+\|K\|_{\infty} (\text{dim}\cM) \right) \delta .
\end{aligned}
\end{multline*} 
In addition, after \eqref{eq:EstimeeAf}, we have:
\begin{multline*}
\sup_{f \in \mathcal{F} }\sup_{x \in \mathcal{M}} \ABS{\mathcal{A}_{h_n,n}(f)(x) -\mathcal{A}(f)(x)}
\\
\begin{aligned}
 \le &  \sup_{f \in \mathcal{F} }\sup_{x \in \mathcal{M}} \ABS{\mathcal{A}_{h_n,n}(f)(x) -\mathbb{E}[\mathcal{A}_{h_n,n}(f)(x)]} \\
 & +\sup_{f \in \mathcal{F} }\sup_{x \in \mathcal{M}} \ABS{ \mathbb{E}[\mathcal{A}_{h_n,n}(f)(x)] - 
    \mathcal{A}(f)(x)} 
\\
 \le & \sup_{f \in \mathcal{F} }\sup_{x \in \mathcal{M}} \ABS{\mathcal{A}_{h_n,n}(f)(x) -\mathbb{E}[\mathcal{A}_{h_n,n}(f)(x)]}+Ch_n
\\
 \le & \sup_{f \in \mathcal{F} }\sup_{x \in \mathcal{M}} \ABS{\mathcal{A}_{h_n,n}(f)(x) -\mathbb{E}[\mathcal{A}_{h_n,n}(f)(x)]}+C \delta .
\end{aligned}
\end{multline*}
Therefore, by letting 
\begin{equation}\label{def:Cprime}
C':= 9[2C+\sqrt{\|K \|_{\infty}c}+\|K\|_{\infty} (\text{dim}\cM)]+C,
\end{equation}where $C$ is the constant appearing in \eqref{eq:EstimeeAf} and \eqref{eq:estimeeEZ_n}, we have:
\begin{multline*}
\mathbf{P}\left( \sup_{f \in \mathcal{F} }\sup_{x \in \mathcal{M}} \ABS{\mathcal{A}_{h_n,n}(f)(x) -\mathcal{A}(f)(x)} > C' \delta \right) 
\\
\begin{aligned}
\le & \mathbf{P} \left( \sup_{f \in \mathcal{F} }\sup_{x \in \mathcal{M}} \ABS{\mathcal{A}_{h_n,n}(f)(x) -\mathbb{E}[\mathcal{A}_{h_n,n}(f)(x)]} > C'\delta- C\delta\right) 
\\
 = &\mathbf{P} \Bigg( \sup_{f \in \mathcal{F} }\sup_{x \in \mathcal{M}} \ABS{\mathcal{A}_{h_n,n}(f)(x) -\mathbb{E}[\mathcal{A}_{h_n,n}(f)(x)] }\\
 & \hspace{2cm}  >9\left( 2C+ \sqrt{\|K \|_{\infty}c}+\|K\|_{\infty} (\text{dim}\cM)\right) \delta \Bigg)
\\
\le & \exp( -\delta^2 n h_n^{d+2}).
\end{aligned}
\end{multline*}
This proves Corollary \ref{cor:dev_Ahn-A}.

\section{Convergence of $k$NN Laplacians}\label{sec:knn}

We now consider the case of random walks exploring the $k$NN graph on $\cM$ built on the vertices $\{X_i\}_{i\geq 1}$, as defined in the introduction. 

Recall that for $n\in \N$, $k\in \{1,\dots n\}$ and $x\in \cM$, the distance between $x$ and its $k$-nearest neighbor is defined in \eqref{def:dist-kNN} and that the Laplacian of the $k$NN-graph is given by, for $x\in\cM$,
\begin{equation}
\mathcal{A}_{n}^{\kNN}(f)(x):=\frac{1}{nR_{n,k_n}^{d+2}(x)} \sum_{i=1}^n \ind_{[0,1]}\left(\frac{\|X_i-x\|_2}{R_{n,k_n}(x)}\right) (f(X_i)-f(x)).
\end{equation}
Notice here that the width of the moving window, $R_{n,k_n}(x)$, is random and depends on $x\in \cM$, contrary to  $h_n$ in the previous generator $\cA_{h_n,n}$ defined by \eqref{eq:def_Ahn-n}.\\

To overcome this difficulty, we use the result of Cheng and Wu \cite[Th. 2.3]{chengwu}, {with $h=\ind_{[0,1]}$,} that allows us to control the randomness and locality of the window:
\begin{theorem}[Cheng-Wu, Th. 2.3]\label{th:ChengWu}
Under Assumption \ref{hyp:K}, if the density $p$ satisfies \eqref{hyp:p} and if 
\[\lim_{n\rightarrow +\infty}\frac{k_n}{n}=0,\quad \mbox{ and } \lim_{n\rightarrow +\infty}\frac{k_n}{\log(n)}=+\infty,\]
then, with probability higher than $1 - n^{-10}$,
\begin{equation}
    \sup_{x\in \cM} \ABS{\frac{R_{n,k_n}(x)}{V_d^{1/d} p^{-1/d}(x)\big(\frac{k_n}{n}\big)^{1/d}} - 1} = O
\PAR{\PAR{ \frac{k_n}{n}}^{2/d}+ \frac{3\sqrt{13}}{d}\sqrt{ \frac{\log n}{k_n}}},
\end{equation}
{where $V_d$ is the volume of unit $d-$ball.}
\end{theorem}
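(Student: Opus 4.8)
The plan is to prove the statement essentially along the lines of Cheng and Wu, combining the geometric ball‑volume estimates already available from Section~\ref{Section: Some geometric background} with a \emph{uniform} (Vapnik--Chervonenkis‑type) concentration inequality for the empirical counting function. Write $N_n(x,r):=\sum_{i=1}^n\ind_{\NRM{X_i-x}_2\le r}$ and $q(x,r):=\P(\NRM{X-x}_2\le r)=\int_{B_{\R^m}(x,r)\cap\cM}p\,\dd\mu$. The starting remark is that, by the very definition \eqref{def:dist-kNN}, $R_{n,k_n}(x)\le r\iff N_n(x,r)\ge k_n$; since $r\mapsto N_n(x,r)$ is non‑decreasing and right‑continuous, it is enough to exhibit, on an event of probability at least $1-n^{-10}$, two deterministic radii $r_-(x)<r_+(x)$ of the form $r_\pm(x)=r_n^\star(x)\PAR{1\pm O(\epsilon_n)}$, where $r_n^\star(x)$ is the normalising radius appearing in the statement and $\epsilon_n:=\PAR{k_n/n}^{2/d}+\sqrt{\log n/k_n}$, such that
\[
N_n(x,r_-(x))<k_n\le N_n(x,r_+(x))\qquad\text{for every }x\in\cM ;
\]
this sandwiches $R_{n,k_n}(x)\in[r_-(x),r_+(x)]$ uniformly and gives the claim.

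First I would record the deterministic geometric estimate: there are $r_0,C>0$ such that for all $x\in\cM$ and $0<r\le r_0$,
\[
\ABS{\frac{q(x,r)}{V_d\,p(x)\,r^d}-1}\le C r^2 .
\]
This follows directly from the normal‑coordinate machinery: in the chart $\mathcal{E}_x$ the volume density is $\sqrt{\det\widehat g^x_{ij}(v)}=1+O(\NRM v_2^2)$ by \eqref{eq:maj-detg}, and the first‑order Taylor term of the $\mathcal{C}^2$ density $p$ around $x$ integrates to zero over the symmetric ball $B_{\R^d}(0,r)$, so that the relative error is genuinely $O(r^2)$ and not $O(r)$ --- this is exactly where the $\PAR{k_n/n}^{2/d}$ contribution to $\epsilon_n$ originates. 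Under \eqref{hyp:p} one has $r_n^\star(x)\asymp\PAR{k_n/n}^{1/d}$ uniformly in $x$, hence $r_n^\star(x)\le r_0$ and $n\,q(x,r)\asymp k_n$ for $r\asymp r_n^\star(x)$ and $n$ large.

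Next I would establish the uniform concentration of $N_n$. The class $\BRA{y\mapsto\ind_{\NRM{y-x}_2\le r}:x\in\cM,\ r>0}$ is contained in the class of indicators of Euclidean balls of $\R^m$, hence is VC; a relative (ratio‑type) Vapnik--Chervonenkis deviation inequality --- or, equivalently, a Bernstein‑type bound for VC classes in the spirit of Theorem~\ref{gine's version} and the results recalled in Section~\ref{sec:RappelVC} --- yields a constant $C'$ such that, with probability at least $1-n^{-11}$,
\[
\sup_{x\in\cM}\ \sup_{0<r\le r_0}\ \ABS{\frac{N_n(x,r)}{n}-q(x,r)}\ \le\ C'\PAR{\sqrt{\frac{q(x,r)\,\log n}{n}}+\frac{\log n}{n}} .
\]
At the scale $r\asymp r_n^\star(x)$ this is a relative fluctuation $O\PAR{\sqrt{\log n/k_n}+\log n/k_n}=O\PAR{\sqrt{\log n/k_n}}$, using $k_n/\log n\to\infty$ from \eqref{hyp:k_n}. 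Combined with the geometric estimate, on the good event and uniformly in $x$,
\[
\ABS{\frac{N_n(x,r)}{n\,V_d\,p(x)\,r^d}-1}=O(\epsilon_n)\qquad\text{for }r\asymp r_n^\star(x),
\]
and choosing $r_\pm(x):=r_n^\star(x)\PAR{1\pm C''\epsilon_n}$ with $C''$ large enough makes the monotone sandwich above hold for every $x$; absorbing constants and replacing $n^{-11}$ by $n^{-10}$ concludes.

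I expect the main obstacle to be obtaining the uniformity over $x$ with the correct \emph{relative} rate $\sqrt{\log n/k_n}$: a crude net‑based union bound over $\cM$ does not by itself control a deviation \emph{relative} to the small probability $q(x,r)\asymp k_n/n\to0$, so one genuinely needs ratio‑type empirical‑process control, together with the (easy but necessary) check that the index class $\BRA{\ind_{B(x,r)}}$, indexed by both centre and radius, is still VC. A secondary technical point is to keep track that the geometric error is $O(r^2)$ and not $O(r)$, and to make the monotone sandwich rigorous --- both of which rely on the $\mathcal{C}^2$‑regularity of $p$ and the odd‑symmetry cancellation in normal coordinates already exploited in Section~\ref{Section: Some geometric background}.
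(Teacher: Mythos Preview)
The paper does not prove this statement: Theorem~\ref{th:ChengWu} is quoted verbatim from Cheng and Wu \cite[Th.~2.3]{chengwu} and used as a black box, with no argument given here. So there is no ``paper's own proof'' to compare against.

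That said, your sketch is essentially the standard route and matches the strategy of \cite{chengwu}: a deterministic ball--volume expansion $q(x,r)=V_d\,p(x)\,r^d\PAR{1+O(r^2)}$ from normal coordinates, a uniform Bernstein/VC relative deviation bound for the empirical counts $N_n(x,r)$ over the VC class of Euclidean balls, and the monotone sandwich $N_n(x,r_-)<k_n\le N_n(x,r_+)$ to trap $R_{n,k_n}(x)$. Two small points worth tightening if you write it out in full. First, $q(x,r)$ is defined with the \emph{Euclidean} distance while the normal--coordinate computation naturally controls \emph{geodesic} balls; you need Theorem~\ref{Theorem: Approximation inequality for Riemannian distanceA} to pass between the two, and you should check that this extra mismatch still contributes only at order $r^{d+2}$ (it does). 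Second, the relative deviation bound you invoke is not literally Theorem~\ref{gine's version} of this paper (which controls centred sums, not ratios at a vanishing variance scale); you need a genuine ratio--type VC inequality such as those in \cite{Gine2016} to get the $\sqrt{\log n/k_n}$ term uniformly over $x$ and $r$, as you correctly flag in your last paragraph.
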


As a corollary for Theorem \ref{th:ChengWu}, we deduce that the distance $R_{n,k_n}(x)$ is, uniformly in $x$ and with large probability, of the order of $h_n$:
\begin{equation}\label{eq:borneR_nkn}
\P \PAR{ \forall x\in \cM,\ R_{n,k_n}(x)\in [ h_n(x)- \gamma_n , h_n(x)+\gamma_n]}\geq 1-n^{-10},
\end{equation}with
\begin{equation}\label{eq:choix_h_gamma}h_n(x)=V_d^{1/d} p^{-1/d}(x)\PAR{\frac{k_n}{n}}^{1/d},\,\mbox{ and }\, \gamma_n= 2  \PAR{\PAR{\frac{k_n}{n}}^{2/d}+ \frac{3\sqrt{13}}{d}\sqrt{ \frac{\log n}{k_n}}}.\end{equation}

We will then derive the limit Theorem \ref{main:theorem-kNN} for the rescaling of the $k$NN Laplacian using next result, proved right after. 

\begin{theorem}\label{Theorem: main theorem 2}
Suppose that the density of points $p$ on the compact smooth manifold $\mathcal{M}$ is of class $\cC^2$. 
Suppose that Assumptions \ref{hyp:K} for the kernel $K$ are satisfied and that $(h_n, n\in\mathbb{N})$ satisfies \eqref{hyp:h_th11}, i.e.
\begin{equation*}
    \lim_{n\rightarrow +\infty} h_n =0,\qquad \mbox{ and }\qquad \lim_{n\rightarrow +\infty}\frac{\log h_n^{-1}}{nh_n^{d+2}} =0.
\end{equation*}
Then, for all real number $\kappa>1$, with probability $1$,  for all  $f \in \mathcal{C}^{3}(\mathcal{M})$,
\begin{equation}
	\sup_{ \kappa^{-1}h_n \le r \le \kappa h_n}\sup_{x \in \mathcal{M}} \left|  \mathcal{A}_{r,n}(f)(x)- \mathcal{A}(f)(x)\right|= O\left(  \sqrt{ \frac{ \log h_n^{-1}}{nh_n^{d+2}}}  \toan{+} h_n \right),\label{eq:cv_th2}
\end{equation} 
where $\cA_{r,n}$ and $\cA$ are respectively defined by \eqref{eq:def_Ahn-n} (replacing $h_n$ with $r$) and \eqref{Equation: differential operator}.
\end{theorem}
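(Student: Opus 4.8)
The plan is to rerun the three-term decomposition used for Theorem~\ref{Theorem: main theorem}, carrying a uniform supremum over the window $r \in [\kappa^{-1}h_n, \kappa h_n]$ at every step. For such $r$, write
\begin{multline*}
\left|\mathcal{A}_{r,n}(f)(x) - \mathcal{A}(f)(x)\right| \le \left|\mathcal{A}(f)(x) - \widetilde{\mathcal{A}}_r(f)(x)\right| \\
+ \left|\widetilde{\mathcal{A}}_r(f)(x) - \mathcal{A}_r(f)(x)\right| + \left|\mathcal{A}_r(f)(x) - \mathcal{A}_{r,n}(f)(x)\right|.
\end{multline*}
The first two, deterministic, terms are handled directly by Propositions~\ref{Theorem: convergence of averaging kerel operators} and~\ref{Theorem: convergence of averaging kerel operators 2}: the $O(r)$ estimates proved there have constants depending only on $K$, $p$ and $\mathcal{M}$, hence $\sup_{\kappa^{-1}h_n\le r\le\kappa h_n}\sup_{x\in\mathcal{M}}$ of each is $O(\kappa h_n)=O(h_n)$. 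Only the statistical term requires new work.

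For the statistical term, set
\[
Z_n':=\sup_{\kappa^{-1}h_n\le r\le\kappa h_n}\ \sup_{f\in\mathcal{F}}\ \sup_{x\in\mathcal{M}}\left|\mathcal{A}_{r,n}(f)(x)-\mathbb{E}[\mathcal{A}_{r,n}(f)(x)]\right|,
\]
with $\mathcal{F}$ as in~\eqref{eq:def-cF}. Since $1/r^{d+2}\le\kappa^{d+2}/h_n^{d+2}$ on the range of $r$ considered, $nh_n^{d+2}Z_n' \le \kappa^{d+2}\sup_r\sup_f\sup_x\big|\sum_{i=1}^n\big(g_{r,f,x}(X_i)-\mathbb{E}[g_{r,f,x}(X)]\big)\big|$, where $g_{r,f,x}(y):=K(\|y-x\|_2/r)(f(y)-f(x))$ and $X$ is an independent copy of the $X_i$. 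Expanding $f$ to third order exactly as in Lemma~\ref{Lemma: a majoring Lemma for Z_n} bounds this by $\sum_\alpha \widetilde Y_n^\alpha + \sum_{\alpha,\beta}\widetilde Y_n^{\alpha,\beta} + \widetilde Y_n^{(3)} + 2nc\,h_n^{d+3}$, where $\widetilde Y_n^\alpha$, $\widetilde Y_n^{\alpha,\beta}$, $\widetilde Y_n^{(3)}$ are the analogues of $Y_n^\alpha$, $Y_n^{\alpha,\beta}$, $Y_n^{(3)}$ with an additional supremum over $r\in[\kappa^{-1}h_n,\kappa h_n]$; the remainder is absorbed because $\sup_r\sup_x\mathbb{E}[K(\|X-x\|_2/r)\|X-x\|_2^3]=O(h_n^{d+3})$, Inequality~\eqref{Equation: an useful estimation 3} of Lemma~\ref{Lemma: An useful estimation} being valid for every width.

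The expectations $\mathbb{E}[\widetilde Y_n^\alpha]$, $\mathbb{E}[\widetilde Y_n^{\alpha,\beta}]$, $\mathbb{E}[\widetilde Y_n^{(3)}]$ are bounded via Theorem~\ref{gine's version}, applied to the class $\{x\mapsto K(\|x-y\|_2/h)(x^\alpha-y^\alpha): y\in\mathcal{M},\ \kappa^{-1}h_n\le h\le\kappa h_n\}$ and its second- and third-order counterparts. The crucial point is that, for the first order, this class is a \emph{subset} of the class $\mathcal{G}$ of Section~\ref{Section: approximations by random operators}, which was already constructed ranging over all $h>0$; it is therefore VC-type with the same constants $A$, $v$, independently of $n$. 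The higher-order classes are treated the same way, since the argument of Section~\ref{Section: approximations by random operators} uses only that $K$ is of bounded variation and that $\mathcal{M}$ is compact, never a restriction on $h$. The variance estimates also survive unchanged: $\sup_f\mathbb{E}[f^2(X)]=O(h_n^{d+2})$ by Inequalities~\eqref{Equation: an useful estimation 2} and~\eqref{Equation: an useful estimation 4}, so choosing $\sigma_n^2\asymp h_n^{d+2}$ gives, exactly as in Proposition~\ref{Proposition: control over the first order term}, $\mathbb{E}[\widetilde Y_n^\bullet]=O(\sqrt{nh_n^{d+2}\log h_n^{-1}}+\log h_n^{-1})$, and hence $\mathbb{E}[Z_n']=O(\sqrt{\log h_n^{-1}/(nh_n^{d+2})}+h_n)$.

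Finally, I would upgrade this to an almost sure bound via Talagrand's inequality: apply Corollary~\ref{Corollary: a corollary of Massart's} to the class $\{y\mapsto K(\|y-x\|_2/r)(f(y)-f(x)): f\in\mathcal{F},\ x\in\mathcal{M},\ \kappa^{-1}h_n\le r\le\kappa h_n\}$, with constant envelope $b=\|K\|_\infty\,\text{diam}(\mathcal{M})$ and variance proxy $\sigma_n^2\asymp h_n^{d+2}$ (valid by~\eqref{Equation: an useful estimation 4}, uniformly in the window), take $t_n=2\log n$, and combine Borel--Cantelli with $\log h_n^{-1}=O(\log n)$ under~\eqref{hyp:h_th11} to get $Z_n'=O(\sqrt{\log h_n^{-1}/(nh_n^{d+2})}+h_n)$ almost surely. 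Summing the three contributions yields~\eqref{eq:cv_th2}. The only substantively new point compared with Theorem~\ref{Theorem: main theorem} --- and thus the step I would check most carefully --- is that enlarging every function class and every supremum to range over $r\in[\kappa^{-1}h_n,\kappa h_n]$ changes neither the VC parameters nor the orders of the variance and moment bounds; this is essentially free, because the classes of Section~\ref{Section: approximations by random operators} were already defined over all $h>0$ and Lemma~\ref{Lemma: An useful estimation} holds for every $h$, so the only real bookkeeping is the harmless factor $\kappa^{d+2}$ from $1/r^{d+2}\le\kappa^{d+2}/h_n^{d+2}$.
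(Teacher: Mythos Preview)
Your proof is correct and follows essentially the same route as the paper's: decompose as in~\eqref{etape1}, note that the deterministic bounds of Propositions~\ref{Theorem: convergence of averaging kerel operators} and~\ref{Theorem: convergence of averaging kerel operators 2} are uniform in the window, and for the statistical term observe that the enlarged classes $\tilde{\mathcal{H}}_n$, $\tilde{\mathcal{I}}_n$, $\tilde{\mathcal{K}}_n$ (with $r\in[\kappa^{-1}h_n,\kappa h_n]$) are still subfamilies of the VC classes already built over all $h>0$, so Theorem~\ref{gine's version} and Talagrand's inequality apply with unchanged constants. Your explicit bookkeeping of the factor $\kappa^{d+2}$ coming from $1/r^{d+2}\le\kappa^{d+2}/h_n^{d+2}$ is in fact slightly more careful than the paper's own presentation, which silently replaces $1/(nr^{d+2})$ by $1/(nh_n^{d+2})$ in the definition of $\tilde Z_n$.
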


\begin{proof}[Proof of Theorem \ref{main:theorem-kNN}]
Assume that Theorem \ref{Theorem: main theorem 2} is proved. {We know that the event $\{\forall x\in \cM,\ R_{n,k_n}(x)\in [ h_n(x)- \gamma_n , h_n(x)+\gamma_n]\}$ is of probability $1-n^{-10}$. Therefore, by Borel-Cantelli's theorem, with probability $1$, there exists $N:=N(\omega) \in \mathbb{N}$ such that: }
$$\forall n \ge N: \forall x\in \cM,\ R_{n,k_n}(x)\in [ h_n(x)- \gamma_n , h_n(x)+\gamma_n] $$
{Thus with probability $1$, for all $n \ge N(\omega)$, we have:}
\begin{align*}
\left|  \mathcal{A}_{n}^{\kNN}(f)(x)- \mathcal{A}(f)(x)\right| \leq & 
\sup_{r\in [a_n,b_n]}\left|  \mathcal{A}_{r,n}(f)(x)- \mathcal{A}(f)(x)\right|
\end{align*}with:
\begin{align*}
    a_n= & V_d^{1/d}p_{\max}^{-1/d}\PAR{\frac{k_n}{n}}^{1/d}-\gamma_n\\
    b_n= & V_d^{1/d}p_{\min}^{-1/d}\PAR{\frac{k_n}{n}}^{1/d}+\gamma_n.
\end{align*}Notice that for $n$ large enough, $a_n$ will be positive.
Using Theorem \ref{Theorem: main theorem 2} with $h_n=b_n$ and $\kappa =(p_{\max}/p_{\min})^{1/d}+1$, we see that $[a_n,b_n]\subset [\kappa^{-1}h_n,\kappa h_n]$. The result follows with the choice of number of neighbors $k_n$ in \eqref{hyp:k_n} coming from \eqref{hyp:h_th11} with our choice of $h_n$. The rate of convergence in \eqref{eq:rate_kNN} result from \eqref{eq:cv_th}. 
\end{proof}

\begin{proof}[Proof for Theorem \ref{Theorem: main theorem 2}]
The proof for the above theorem is essentially the same as the proof we presented for Theorem \ref{Theorem: main theorem} except some necessary modifications. Decomposing the error term as in \eqref{etape1}, we have to treat with similar terms. The approximations involving the geometry and corresponding to Propositions \ref{Theorem: convergence of averaging kerel operators} and \ref{Theorem: convergence of averaging kerel operators 2} can be generalized directly to account for a supremum in the window width $r\in [\kappa^{-1}h_n,\kappa h_n]$. Let us consider the statistical term.\\

We recall that $\cF$ is defined by \eqref{eq:def-cF}.We introduce the following sequence of random variables $(\tilde{Z}_n,n \in \mathbb{N})$:
\begin{align*}
\tilde{Z}_n &:=\sup_{f \in \mathcal{F} }\sup_{ \kappa^{-1}h_n \le r \le \kappa h_n}\sup_{x \in  \mathcal{M}} \bigg|\mathcal{A}_{r,n}(f)(x) -\mathbb{E}[\mathcal{A}_{r,n}(f)(x)]  \bigg| 
		\\
&=\frac{1}{nh_n^{d+2}} \sup_{f \in \mathcal{F}} \sup_{ \kappa^{-1}h_n \le r \le \kappa h_n}\sup_{x \in \mathcal{M}} \left| \sum_{i=1}^n \bigg( K\left( \frac{ \|X_i-x\|_2}{r}\right) (f(X_i)-f(x))   \right. \\ & \quad\quad\quad\quad\quad\quad\quad \left. -\mathbb{E}\SBRA{K\PAR{ \frac{ \|X-x\|_2}{r}} (f(X)-f(x))  }\bigg)\right| .
	\end{align*}
 Similar to what we did in Section \ref{Section: Step I: decomposition of $Z_n$}, we can show that there is a constant $c$ independent of $n$ such that:
 \begin{equation}
	nh_n^{d+2}\tilde{Z}_n \le \sum_{\alpha =1}^{m} \tilde{Y}^{\alpha}_n + \sum_{\alpha ,\beta=1}^{m} \tilde{Y}^{\alpha,\beta}_n + \tilde{Y}^{(3)}_n +2 nch_n^{d+3},
\end{equation}
where
	\begin{align*}
	\tilde{Y}^{\alpha}_n &:= \sup_{ \kappa^{-1}h_n \le r \le \kappa h_n}\sup_{x \in \mathcal{M}}\left| \sum_{i=1}^n \bigg[ K\left( \frac{ \|X_i-x\|_2}{r}\right) (X_{i}^{\alpha}-x_{i}^{\alpha}) -\right. 
	\\ &\hspace{6cm}
	\left.\mathbb{E}\left(K\left( \frac{ \|X-x\|_2}{r}\right) (X^{\alpha}-x^{\alpha})  \right)\bigg]\right|
	\\
	\tilde{Y}^{\alpha,\beta}_n &:=  \sup_{ \kappa^{-1}h_n \le r \le \kappa h_n}\sup_{x \in \mathcal{M}}\left| \sum_{i=1}^n \bigg[K\left( \frac{ \|X_i-x\|_2}{r}\right) (X_{i}^{\alpha}-x^{\alpha})(X_{i}^{\beta}-x^{\beta}) -\right. \\ 
	&\hspace{4cm} \left.\mathbb{E}\left(K\left( \frac{ \|X-x\|_2}{r}\right) (X^{\alpha}-x^{\alpha})(X^{\beta}-x^{\beta})  \right) \bigg]\right| \quad \quad \quad 
	\\
	\tilde{Y}^{(3)}_n &:=  \sup_{ \kappa^{-1}h_n \le r \le \kappa h_n}\sup_{x \in \mathcal{M}} \left| \sum_{i=1}^n K\left( \frac{ \|X_i-x\|_2}{r}\right)\|X_i-x\|_2^3 -\right. 
	\\ &\hspace{6cm}
	\left. \mathbb{E}\left[K\left( \frac{ \|X-x\|_2}{r}\right)\|X-x\|_2^3\right] \right|.
\end{align*}
We now treat these terms by applying Vapnik-Chernonenkis theory. Let us start with the control the first order terms $\mathbb{E}[\tilde{Y}^{\alpha}_n ]$:

 In Section \ref{Section: Control the first order terms}, we have already shown that the family
 \[
\mathcal{G}:=\left\{\varphi_{h,y,z}:x\longmapsto K\left( \frac{\| x-y\|_2}{h} \right) (x^{\alpha}-z^{\alpha})  : y,z \in \mathcal{M} , h>0\right\} 
\] is a VC class of functions, and that there exist real values $A \ge 6, v \ge 1$ such that, for all $ \varepsilon \in(0,1)$,
$N( \varepsilon, \mathcal{G}) \le \big( A/2\varepsilon\big)^v.$

Now, on top of this, we consider the following sequence of families of real functions on $\cM$: 
 \[
\tilde{\mathcal{H}}_n=\left\{ \varphi_{r,y}  : y \in \cM, \kappa^{-1} h_n \le r \le \kappa h_n \right\}, 
\]with $\varphi_{r,y} :
     x \longmapsto K\left(\frac{\| x-y\|_2}{r} \right) (x^{\alpha}-y^{\alpha})$.
Because each $\tilde{\mathcal{H}}_n$ is a subfamily of $\mathcal{G}$, it is still a VC class for which we can use the Talagrand inequality \ref{gine's version}. The latter can deal with the additional supremum with respect to the window width. Similarly to what we did in the proof of Proposition \ref{Proposition: control over the first order term}, we obtain that:
\begin{align*}
	\frac{1}{nh_n^{d+2}}			\mathbb{E}\SBRA{ \left\| \sum_{i=1}^n \PAR{f(X_i)-\mathbb{E}[f(X)]} \right\|_{\tilde{\cH}_n}}  = O\left(  \sqrt{ \frac{ \log h_n^{-1}}{nh_n^{d+2}}}\right),
\end{align*}
which means that as $n \rightarrow \infty$,
\[
\mathbb{E}[\tilde{Y}^{\alpha}_n ]= O\left(  \sqrt{ \frac{ \log h_n^{-1}}{nh_n^{d+2}}}\right).
\]

The control the second and third order terms are done as in Sections \ref{Section: Control the second order terms} and \ref{Section: Control the first order terms}, using the same trick and the classes of functions
\begin{multline*}\tilde{\mathcal{I}}_n:=\left\{ x \mapsto K\left(\frac{\| x-y\|}{r} \right) (x^{\alpha}-y^{\alpha})(x^{\beta}-q^{\beta})  :\right. \\
\left.y \in \mathcal{M}, q\in \mathcal{M} , \kappa^{-1}h_n \le r \le \kappa h_n\right\}
\end{multline*}
and
$$\tilde{\mathcal{K}}_n:= \left\{ x \mapsto K\left(\frac{\| x-y\|}{r} \right) \|x-y\|^3 : y \in \mathcal{M}, \kappa^{-1}h_n \le r \le \kappa h_n  \right\}.$$
This provides:
\[
\mathbb{E}[\tilde{Y}^{\alpha,\beta}_n ]= O\left(  \sqrt{ \frac{ \log h_n^{-1}}{nh_n^{d+2}}}\right),\quad \mbox{ and }\quad \mathbb{E}[\tilde{Y}^{(3)}_n ]= O\left(  \sqrt{ \frac{ \log h_n^{-1}}{nh_n^{d+2}}}\right).
\]
Therefore, we can deduce the conclusion  by using the same argument presented in Section \ref{Section: Step III: Conclusion}.
\end{proof}

{\footnotesize	
\bibliographystyle{acm}
\bibliography{Graph_laplacian.bib}

\begin{thebibliography}{10}

\bibitem{Belkin2008}
{\sc {Belkin}, M., and {Niyogi}, P.}
\newblock {Towards a theoretical foundation for Laplacian-based manifold
  methods}.
\newblock {\em {J. Comput. Syst. Sci.} 74}, 8 (2008), 1289--1308.

\bibitem{Belkin2009}
{\sc {Belkin}, M., {Sun}, J., and {Wang}, Y.}
\newblock Constructing {Laplace} operator from point clouds in
  {{\(\mathbb{R}^d\)}}.
\newblock In {\em Proceedings of the 20th annual ACM-SIAM symposium on discrete
  algorithms, SODA 2009, New York, NY, USA, January 4--6, 2009.} Philadelphia,
  PA: Society for Industrial {and} Applied Mathematics (SIAM); New York, NY:
  Association for Computing Machinery (ACM), 2009, pp.~1031--1040.

\bibitem{berenfeldhoffman}
{\sc Berenfeld, C., and Hoffmann, M.}
\newblock Density estimation on an unknown submanifold.
\newblock {\em Electron. J. Stat. 15}, 1 (2021), 2179--2223.

\bibitem{Calder2022}
{\sc {Calder}, J., and {Garc\'{\i}a Trillos}, N.}
\newblock Improved spectral convergence rates for graph {L}aplacians on
  {$\varepsilon$}-graphs and {$k$}-{NN} graphs.
\newblock {\em Appl. Comput. Harmon. Anal. 60\/} (2022), 123--175.

\bibitem{Chavel}
{\sc Chavel, I.}
\newblock {\em Riemannian Geometry}, 2~ed., vol.~98 of {\em Cambridge studies
  in advanced mathematics}.
\newblock Cambridge University Press, 2006.

\bibitem{chengwu}
{\sc Cheng, X., and Wu, H.-T.}
\newblock Convergence of graph {L}aplacian with k{NN} self-tuned kernels.
\newblock {\em Inf. Inference 11}, 3 (2022), 889--957.

\bibitem{Cipriani2020}
{\sc {Cipriani}, A., and v.~G.~{Bart}}.
\newblock The discrete {Gaussian} free field on a compact manifold.
\newblock {\em Stochastic Processes Appl. 130}, 7 (2020), 3943--3966.

\bibitem{devroyegyorfilugosi}
{\sc Devroye, L., Gy\"{o}rfi, L., and Lugosi, G.}
\newblock {\em A probabilistic theory of pattern recognition}, vol.~31 of {\em
  Applications of Mathematics (New York)}.
\newblock Springer-Verlag, New York, 1996.

\bibitem{Trillos2020}
{\sc {Garc{\'{\i}}a Trillos}, N., {Gerlach}, M., {Hein}, M., and
  {Slep{\v{c}}ev}, D.}
\newblock Error estimates for spectral convergence of the graph {Laplacian} on
  random geometric graphs toward the {Laplace}-{Beltrami} operator.
\newblock {\em Found. Comput. Math. 20}, 4 (2020), 827--887.

\bibitem{gine2001}
{\sc {Gin\'e}, E., and {Guillou}, A.}
\newblock {Rates of strong uniform consistency for multivariate kernel density
  estimators. (Vitesse de convergence uniforme presque s\^ure pour des
  estimateurs \`a noyaux de densit\'es multivari\'ees)}.
\newblock {\em {Ann. Inst. Henri Poincar\'e, Probab. Stat.} 38}, 6 (2002),
  907--921.

\bibitem{Gine2006}
{\sc {Gin\'e}, E., {Koltchinskii}, V., {Li}, W., and {Zinn}, J.}, Eds.
\newblock {\em {High dimensional probability. Proceedings of the fourth
  international conference. Many papers based on the presentations at the
  conference, Santa Fe, NM, USA, June 20--24, 2005}}, vol.~51.
\newblock Beachwood, OH: IMS, Institute of Mathematical Statistics, 2006.

\bibitem{Gine2016}
{\sc {Gin\'e}, E., and {Nickl}, R.}
\newblock {\em {Mathematical foundations of infinite-dimensional statistical
  models}}, vol.~40.
\newblock Cambridge: Cambridge University Press, 2016.

\bibitem{HeinAudibertvonLuxburg_COLT2005}
{\sc Hein, M., Audibert, J.-Y., and von Luxburg, U.}
\newblock From graphs to manifolds---weak and strong pointwise consistency of
  graph {L}aplacians.
\newblock In {\em Learning theory}, vol.~3559 of {\em Lecture Notes in Comput.
  Sci.} Springer, Berlin, 2005, pp.~470--485.

\bibitem{HeinAudibertvonLuxburg_JMLR}
{\sc Hein, M., Audibert, J.-Y., and von Luxburg, U.}
\newblock Graph {L}aplacians and their convergence on random neighborhood
  graphs.
\newblock {\em J. Mach. Learn. Res. 8\/} (2007), 1325--1368.

\bibitem{Hsu2002}
{\sc {Hsu}, E.~P.}
\newblock {\em Stochastic analysis on manifolds}, vol.~38 of {\em Grad. Stud.
  Math.}
\newblock Providence, RI: American Mathematical Society (AMS), 2002.

\bibitem{Lee2013}
{\sc Lee, J.~M.}
\newblock {\em Introduction to smooth manifolds}, second~ed., vol.~218 of {\em
  Graduate Texts in Mathematics}.
\newblock Springer, New York, 2013.

\bibitem{Lee2018}
{\sc {Lee}, J.~M.}
\newblock {\em {Introduction to Riemannian manifolds}}, vol.~176.
\newblock Cham: Springer, 2018.

\bibitem{Hein2007}
{\sc M, H., {Audibert}, J.-Y., and {Von Luxburg}, U.}
\newblock Graph {Laplacians} and their convergence on random neighborhood
  graphs.
\newblock {\em J. Mach. Learn. Res. 8\/} (2007), 1325--1368.

\bibitem{PascalMassart2000}
{\sc {Massart}, P.}
\newblock {About the constants in Talagrand's concentration inequalities for
  empirical processes.}
\newblock {\em {Ann. Probab.} 28}, 2 (2000), 863--884.

\bibitem{nollan1987}
{\sc {Nolan}, D., and {Pollard}, D.}
\newblock {U-processes: Rates of convergence}.
\newblock {\em {Ann. Stat.} 15\/} (1987), 780--799.

\bibitem{Singer2006}
{\sc {Singer}, A.}
\newblock From graph to manifold {Laplacian}: the convergence rate.
\newblock {\em Appl. Comput. Harmon. Anal. 21}, 1 (2006), 128--134.

\bibitem{taoshi2020}
{\sc Tao, W., and Shi, Z.}
\newblock Convergence of laplacian spectra from random samples.
\newblock {\em Journal of Computational Mathematics 38}, 6 (2020), 952--984.

\bibitem{Ting2010}
{\sc {Ting}, D., {Huang}, L., and {Jordan}, M.}
\newblock An analysis of the convergence of graph laplacians.
\newblock {\em ICML\/} (2010).

\end{thebibliography}
}

\newpage
\appendix

\section{Some concentration inequalities}\label{Appendix:ConcentrationIneq}
	\subsection{Talagrand's concentration inequality}
	As a corollary of Talagrand's inequality presented in Massart \cite[Theorem 3]{PascalMassart2000}, where for simplicity we choose $\varepsilon=8$, we have the following deviation inequality:
	\begin{corollary}[Simplified version of Massart's inequality]\label{Corollary: a corollary of Massart's}
	Consider $n$ independant random variables $\xi_1,\dots, \xi_n$ with values in some measurable space $ ( \mathbb{X}, \mathfrak{X})$. Let $\mathcal{F}$ be some countable family of real-valued measurable functions on $(\mathbb{X},\mathfrak{X})$ such that for some positive real number $b$, $\| f\|_{\infty} \le b$ for every $f \in \mathcal{F}$.
		$$Z:= \sup_{f \in\mathcal{F}} \left| \sum_{i=1}^n \bigg(f( \xi_i) - \mathbb{E} \left[ f(\xi_i)\right]\bigg)\right| .$$
		then with $\sigma^2 = \sup_{f \in \mathcal{F}}  \text{Var}( f(\xi_1))$, and for any positive real number $x$,
		$$\mathbb{P} \PAR{Z \ge 9(\mathbb{E}[Z]+\sigma\sqrt{nx} +bx)} \le e^{-x}  .$$
	\end{corollary}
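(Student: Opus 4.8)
The plan is to obtain this simplified statement as a direct specialization of Massart's form of Talagrand's concentration inequality with explicit constants, i.e.\ \cite[Theorem~3]{PascalMassart2000}: the value $\varepsilon=8$ of the free tuning parameter in that theorem is exactly what produces the constant $9$, and the rest is bookkeeping of absolute constants.

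Recall the structure of Massart's bound. For independent random variables $\xi_1,\dots,\xi_n$ with values in $(\mathbb{X},\mathfrak{X})$ and a countable family of bounded measurable functions, Massart's Theorem~3 asserts, for every $\varepsilon>0$ and $x>0$, an inequality of the form
\[
\mathbb{P}\Bigl(Z\ge(1+\varepsilon)\,\mathbb{E}[Z]+\sqrt{2\,\kappa_1\,v\,x}+\kappa_2(\varepsilon)\,b'\,x\Bigr)\le e^{-x},
\]
where $Z$ is the supremum of the (two-sided) centred empirical process, $v=\sup_{f}\sum_{i=1}^n\operatorname{Var}(f(\xi_i))$ is the weak variance, $b'$ is a uniform bound on the centred summands, $\kappa_1$ is a universal constant, and $\kappa_2(\varepsilon)\downarrow\tfrac13$ as $\varepsilon\to\infty$. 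The first step is to feed in the data of the corollary: countability of $\mathcal{F}$ guarantees the relevant suprema are measurable; the hypothesis $\|f\|_\infty\le b$ gives $\|f-\mathbb{E}[f]\|_\infty\le 2b$, so one may take $b'=2b$; and, in the i.i.d.\ regime in which the corollary is used, $v=n\operatorname{Var}(f(\xi_1))\le n\sigma^2$ with $\sigma^2=\sup_f\operatorname{Var}(f(\xi_1))$ as in the statement. If Massart's inequality is quoted for the one-sided supremum $Z^+=\sup_f\sum_i\bigl(f(\xi_i)-\mathbb{E}[f(\xi_i)]\bigr)$, one first replaces $\mathcal{F}$ by $\mathcal{F}\cup(-\mathcal{F})$ to pass to $Z=\sup_f\bigl|\sum_i(f(\xi_i)-\mathbb{E}[f(\xi_i)])\bigr|$; this leaves $b$ and $v$ unchanged and only turns $\mathbb{E}[Z^+]$ into $\mathbb{E}[Z]$.

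It then remains to collect the three terms under the common factor $9$. Choosing $\varepsilon=8$ gives the coefficient $1+\varepsilon=9$ in front of $\mathbb{E}[Z]$; the variance term obeys $\sqrt{2\kappa_1 v x}\le\sqrt{2\kappa_1}\,\sigma\sqrt{nx}\le 9\,\sigma\sqrt{nx}$ provided $\kappa_1\le 81/2$, which is amply satisfied by the universal constant of \cite{PascalMassart2000}; and the linear term $\kappa_2(8)\,b'x=2\kappa_2(8)\,bx$ has a coefficient comfortably below $9$. This yields $\mathbb{P}\bigl(Z\ge 9(\mathbb{E}[Z]+\sigma\sqrt{nx}+bx)\bigr)\le e^{-x}$, which is the claim. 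I do not expect a real obstacle here: the argument is a quotation of \cite[Theorem~3]{PascalMassart2000} together with elementary constant-chasing, and the only points that merit a second glance are the symmetrization step (one-sided to two-sided supremum) and the inequality $\sigma^2\ge n^{-1}\sup_f\sum_i\operatorname{Var}(f(\xi_i))$, which relies on the $\xi_i$ being identically distributed. If one prefers not to track Massart's exact numerical constants, the same conclusion---with a possibly different but still universal constant in place of $9$---follows identically from Bousquet's inequality or the Klein--Rio inequality combined with the same reductions.
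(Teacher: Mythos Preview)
Your proposal is correct and follows exactly the route indicated in the paper: the paper does not give a detailed argument either, but simply states that the corollary is obtained from \cite[Theorem~3]{PascalMassart2000} by choosing $\varepsilon=8$, which is precisely the specialization and constant-chasing you carry out.
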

	
	\subsection{Covering numbers and complexity of a class of functions}
	If $S \subset T$ is a subspace of $T$, it is not true in general that $N( \varepsilon, S,d) \le N( \varepsilon, T,d)$ because of the constraints that the cencers $x_i$ should belong to $S$. However, we can bound the covering number of $S$ by $T$'s as follows
	\begin{lemma}
		\label{covering number of subspace }
		If $S \subset T$ is a subspace of the metric space $(T,d)$, then for any positive number $\varepsilon$
		$$N( 2\varepsilon, S,d) \le N( \varepsilon, T,d). $$
	\end{lemma}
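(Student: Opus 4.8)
The plan is to shift an optimal $\varepsilon$-cover of $T$ into $S$, at the cost of a factor $2$ coming from the triangle inequality. If $N(\varepsilon,T,d)=+\infty$ there is nothing to prove, so I would assume it equals a finite integer $N$ and fix points $x_1,\dots,x_N\in T$ forming an $\varepsilon$-cover of $(T,d)$.

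The key step is to discard the centers that are useless for covering $S$ and to replace each remaining one by a nearby point of $S$. Concretely, I would set $I:=\{\, i\in\{1,\dots,N\}\ :\ \exists\, y\in S,\ d(x_i,y)\le\varepsilon\,\}$ and, for each $i\in I$, choose one such witness $s_i\in S$ with $d(x_i,s_i)\le\varepsilon$. Then for an arbitrary $s\in S\subset T$ there is an index $i$ with $d(s,x_i)\le\varepsilon$; this $i$ lies in $I$ (with witness $y=s$), and the triangle inequality gives $d(s,s_i)\le d(s,x_i)+d(x_i,s_i)\le 2\varepsilon$. Hence $\{\,s_i\ :\ i\in I\,\}$ is a $2\varepsilon$-cover of $(S,d)$ whose points all belong to $S$ and whose cardinality is $|I|\le N$. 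By the definition of the covering number, $N(2\varepsilon,S,d)\le |I|\le N=N(\varepsilon,T,d)$, which is the claim (the degenerate case $S=\emptyset$ being trivial).

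This argument is entirely elementary; there is no real obstacle. The only point deserving a word of care is exactly the one flagged before the statement: an optimal $\varepsilon$-cover of $T$ need not have its centers inside $S$, which is why one cannot simply bound $N(\varepsilon,S,d)$ by $N(\varepsilon,T,d)$ and must instead relocate each relevant center to $S$, thereby doubling the radius.
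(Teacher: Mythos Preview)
Your proof is correct and follows essentially the same argument as the paper: take an $\varepsilon$-cover of $T$, discard the centers whose $\varepsilon$-balls miss $S$, replace each remaining center by a nearby point of $S$, and use the triangle inequality to obtain a $2\varepsilon$-cover of $S$ with at most $N$ points. Your write-up is in fact slightly more careful in handling the trivial cases $N(\varepsilon,T,d)=+\infty$ and $S=\emptyset$.
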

	\begin{proof}
		Let $\{x_1,...,x_N\}$ be a $\varepsilon$-cover of $T$ and for any $i \in [\![ 1,N ]\!]$, let us define $K_i:= \{ x \in T: d(x,x_i) \le \varepsilon\}$. 
		Of course, $K_i $ may not intersect $S$, hence, without loss of generality, assume that for a natural number $0<m \le N$ we have that
		$K_i \cap S \ne \emptyset$ if and only if $i \le m$. 
		Let $y_i$ be any point in $K_i \cap S$ for $i \in [\![1,m]\!]$. Since $\{x_1,...,x_N\}$ is a $\varepsilon$ cover of $T$, for any $y \in S$, there exists a $i \le m$ such that $y \in K_i \cap S$. Hence,
		$d(y,y_i) \le 2\varepsilon$.
		Consequently, $y_1,...,y_m$ be a $2\varepsilon$-cover of $(S,d)$.
	\end{proof}
	Let us consider the Borel space $( \R^m, \mathcal{B}(R^m))$. If $\mathcal{F}, \mathcal{G}$ are two collections of measurable functions on $\mathbb{X}$, we are interested in the "complexity" of $\mathcal{F} \cdot \mathcal{G} =\{ fg | f \in \mathcal{F} , g \in \mathcal{G}\}$.
	\begin{lemma}[Bound on $\varepsilon$-covering numbers]
		\label{covering numbers of product spaces}
		Let $\mathcal{F}, \mathcal{G}$ be two bounded collections of measurable functions, i.e, there are two constants $c_1,c_2$ such that 
		$$\|f\|_{\infty} \le c_1  \text{ and } \|g\|_{\infty} \le c_2 \text{ for all }f \in \mathcal{F} \text{ , } g \in \mathcal{G}.$$
		then for any probability measure $Q$, 
		$$N(  2\varepsilon c_1c_2,  \mathcal{F} \cdot \mathcal{G}, L^2(Q) ) \le N(  \varepsilon c_1,  \mathcal{F} ,L^2(Q)) N( \varepsilon c_2, \mathcal{G}, L^2(Q) ). $$
	\end{lemma}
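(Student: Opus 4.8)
The plan is to run the standard product-of-$\varepsilon$-nets argument. Fix a probability measure $Q$ on $(\R^m,\mathcal{B}(\R^m))$. If either $N(\varepsilon c_1,\mathcal{F},L^2(Q))$ or $N(\varepsilon c_2,\mathcal{G},L^2(Q))$ is infinite, the claimed inequality is trivial, so I may assume both are finite. I would then take $\{f_1,\dots,f_M\}\subset\mathcal{F}$ an $\varepsilon c_1$-cover of $\mathcal{F}$ for the $L^2(Q)$ pseudo-metric with $M=N(\varepsilon c_1,\mathcal{F},L^2(Q))$, and $\{g_1,\dots,g_P\}\subset\mathcal{G}$ an $\varepsilon c_2$-cover of $\mathcal{G}$ with $P=N(\varepsilon c_2,\mathcal{G},L^2(Q))$; note that the covering centers may be taken inside the respective classes, so that in particular $\|f_i\|_\infty\le c_1$ for every $i$.

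Next I would verify that the family $\{f_ig_j:\ 1\le i\le M,\ 1\le j\le P\}$, of cardinality at most $MP$, is a $2\varepsilon c_1c_2$-cover of $\mathcal{F}\cdot\mathcal{G}$ in $L^2(Q)$. Let $f\in\mathcal{F}$, $g\in\mathcal{G}$, and choose indices $i,j$ with $\|f-f_i\|_{L^2(Q)}\le\varepsilon c_1$ and $\|g-g_j\|_{L^2(Q)}\le\varepsilon c_2$. Writing
\[
f g-f_i g_j=(f-f_i)\,g+f_i\,(g-g_j),
\]
the triangle inequality in $L^2(Q)$ together with the elementary bound $\|\phi\psi\|_{L^2(Q)}\le\|\psi\|_\infty\,\|\phi\|_{L^2(Q)}$ yields
\[
\|f g-f_i g_j\|_{L^2(Q)}\le\|g\|_\infty\,\|f-f_i\|_{L^2(Q)}+\|f_i\|_\infty\,\|g-g_j\|_{L^2(Q)}\le c_2\,\varepsilon c_1+c_1\,\varepsilon c_2=2\varepsilon c_1c_2.
\]

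Consequently $N(2\varepsilon c_1c_2,\mathcal{F}\cdot\mathcal{G},L^2(Q))\le MP=N(\varepsilon c_1,\mathcal{F},L^2(Q))\,N(\varepsilon c_2,\mathcal{G},L^2(Q))$, which is the assertion. I do not anticipate any genuine obstacle here: the only two points that deserve a word of care are that the $\varepsilon$-net centers of $\mathcal{F}$ may be assumed to lie in $\mathcal{F}$ (so that their sup-norms are controlled by $c_1$, which is exactly what makes the cross term bound work), and the preliminary reduction to the case where both covering numbers are finite.
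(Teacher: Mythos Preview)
Your proof is correct and follows essentially the same route as the paper: both take $\varepsilon c_1$- and $\varepsilon c_2$-covers, use the decomposition $fg-f_ig_j=(f-f_i)g+f_i(g-g_j)$, and bound each piece via the sup-norm of the other factor. Your version is in fact slightly more careful than the paper's, since you make explicit that the net centers can be chosen inside $\mathcal{F}$ (so that $\|f_i\|_\infty\le c_1$ is available) and you dispose of the infinite-covering-number case.
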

	\begin{proof}
		If $f_1,f_2,...,f_n$ is a $\varepsilon c_1$-cover of $(\mathcal{F},L^2(Q))$ and  $g_1,g_2,...,g_m$ is a $\varepsilon c_2$-cover of $(\mathcal{G},L^2(Q))$, then for any $(f,g) \in \mathcal{F} \times \mathcal{G}$, we have:
		$$| f(x)g(x)-f_i(x)g_j(x)| \le |f(x)-f_i(x)|c_2+c_1|g(x)-g_i(x)|.$$
		which implies that $ \{ f_i g_j : 1\le i \le n \text{ and } 1\le j \le m \}$
		is a $2\varepsilon c_1c_2$ -cover of $\mathcal{F} \cdot \mathcal{G}.$
	\end{proof}
		

		The following lemma is just a simplied version result of the theory of VC Hull class of functions (Section 3.6.3 in \cite{Gine2016}).
	
	\begin{lemma}
		\label{construction 2}
		If $f$ is a bounded measurable function on the measurable space $(\R^m,\mathcal{B}(\R^m))$ and $D =[a,b] \subset \mathbb{R}$ is a compact interval, then 
		$$\mathcal{F}:= \{ f +d : d \in D\},$$
		is VC type with respect to a constant envelope.
	\end{lemma}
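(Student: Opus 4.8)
The plan is to exhibit a constant envelope explicitly and then reduce the required covering-number estimate to the trivial problem of covering a one-dimensional compact interval, exploiting the fact that the additive parameter $d$ cancels in differences. \textbf{Envelope.} Since $f$ is bounded, write $M := \|f\|_{\infty} < \infty$; since $D = [a,b]$ is compact, set $F := M + \max(|a|,|b|)$. For every $g = f + d$ with $d \in D$ one has $|g(x)| \le M + |d| \le F$ for all $x \in \R^m$, so the constant function $F$ is a measurable envelope of $\mathcal{F}$, and $\|F\|_{L^2(Q)} = F$ for every probability measure $Q$.

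\textbf{Reduction to a Euclidean metric and conclusion.} For $g_i = f + d_i$ ($i = 1,2$) the difference $g_1 - g_2 \equiv d_1 - d_2$ is constant, hence $\|g_1 - g_2\|_{L^2(Q)} = |d_1 - d_2|$ for \emph{every} probability measure $Q$. Thus $d \mapsto f + d$ is an isometry from $(D, |\cdot|)$ onto $(\mathcal{F}, L^2(Q))$, so $N(\delta, \mathcal{F}, L^2(Q)) = N(\delta, D, |\cdot|)$, a quantity that does not depend on $Q$ at all. A compact interval of length $b - a$ admits a $\delta$-cover of cardinality at most $(b-a)/\delta + 1$, so taking $\delta = \varepsilon \|F\|_{L^2(Q)} = \varepsilon F$ with $\varepsilon \in (0,1)$ gives
\[
N(\varepsilon \|F\|_{L^2(Q)}, \mathcal{F}, L^2(Q)) \le \frac{b-a}{\varepsilon F} + 1 \le \frac{1}{\varepsilon}\Big(\frac{b-a}{F} + 1\Big) \le \Big(\frac{A}{\varepsilon}\Big)^{v}
\]
with $v := 1$ and $A := \frac{b-a}{F} + 1$ (one may freely enlarge $A$ to meet a normalization such as $A \ge 6$). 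By Definition \ref{def:VCclass} this shows that $\mathcal{F}$ is of VC type with respect to the constant envelope $F$.

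\textbf{Main difficulty.} There is essentially no technical obstacle here; the only point deserving attention is that the VC-type definition requires the covering bound to hold uniformly over all probability measures $Q$, and this holds automatically because the perturbation $d$ disappears from $g_1 - g_2$, so the $L^2(Q)$-pseudometric on $\mathcal{F}$ coincides with the Euclidean metric on the parameter interval, independently of $Q$.
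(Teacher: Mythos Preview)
Your proof is correct and follows essentially the same approach as the paper's: both arguments reduce the covering-number bound to covering the one-dimensional parameter interval $D=[a,b]$, using that $\|(f+d_1)-(f+d_2)\|_{L^2(Q)}=|d_1-d_2|$ is independent of $Q$. Your version makes this isometry explicit and handles the envelope normalization in Definition~\ref{def:VCclass} a bit more carefully (substituting $\delta=\varepsilon F$ rather than leaving the scale implicit), but the underlying idea is identical.
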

	\begin{proof}
		Let $N= [ \frac{b-a}{\varepsilon}]$, $f_i = f+i\varepsilon$ for all $i \in [\![ 1,N]\!]$. So, by the definition of $\mathcal{F}$, for all $g \in \mathcal{F}$, there is an $i \in [\![1,N]\!]$ such that $|g(x)-f_i(x)|<\varepsilon$ for all $x \in \R^m$.
		Thus, for all probability measure $Q$ on $\R^m$, we have:
		$\| g- f_i\|_{L^2(Q)} \le \varepsilon, $
		which makes $\mathcal{H}:= \{ f_i  : i \in [\![1,N]\!]\}$ be a $\varepsilon$-cover of $L^2(Q)$. Hence,
		$$N(\varepsilon, \mathcal{F}, L^2(Q)) \le N \stackrel{}{\le} \frac{(b-a)}{\varepsilon}. $$
		So $\mathcal{F}$ is a VC-type class of functions with $A=b-a$, $v=1$,$F=\max(1, \|f\|_{\infty}+|a|,\|f\|_{\infty}+|b|) $.
	\end{proof}

\section{Some estimates using the total variation}

\begin{lemma}	\label{Lemma: an inequality on bounded variation functions}
	If $K: [0, +\infty) \rightarrow \mathbb{R}$ is a bounded variation function with $H(a)$ its total variation on  the interval $[0,a]$, for all $a, b \in [0,\infty]$, with 
	$a\leq b$,
	\begin{equation}
		|K(b)-K(a)| \le H(b) -H(a). \label{Equation: first inequality on K}
	\end{equation}
Besides, if $K$ satisfies Assumption \ref{hyp:K}, then, when $b$ goes to infinity,
\begin{equation}
 K(b)b^{d+3}=o(1)\qquad \text{and } \int_{b}^{\infty} K(a)a^{d+1} \mathrm{d}a=o(1/b). \label{Equation: second inequality on K}
\end{equation}
\end{lemma}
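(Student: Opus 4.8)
The plan is to prove the two displays in turn, both resting on the additivity of the total variation over adjacent intervals.

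For \eqref{Equation: first inequality on K}, I would first recall the standard additivity property: for $0\le a\le b<\infty$ one has $H(b)=H(a)+V(K;[a,b])$, where $V(K;[a,b])$ is the total variation of $K$ on $[a,b]$; this follows by splitting any subdivision of $[0,b]$ at $a$ (refining it to contain $a$ if necessary) and taking suprema. Choosing the trivial subdivision $\{a,b\}$ of $[a,b]$ gives $V(K;[a,b])\ge |K(b)-K(a)|$, whence $H(b)-H(a)\ge |K(b)-K(a)|$. The case $b=\infty$ is obtained by passing to the limit: $H(\infty):=\lim_{b\to\infty}H(b)$ exists in $[0,\infty]$ by monotonicity and is finite under Assumption \ref{hyp:K}; the Cauchy criterion then forces $K(b)$ to converge as $b\to\infty$, its limit being the value $K(\infty)=0$ posited in Assumption \ref{hyp:K}, and letting $b\to\infty$ in $|K(b)-K(a)|\le H(b)-H(a)$ yields $|K(a)|\le H(\infty)-H(a)$.

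For \eqref{Equation: second inequality on K}, both estimates start from the pointwise bound $|K(a)|\le H(\infty)-H(a)=\int_{(a,\infty)}\mathrm{d}H(t)$, i.e.\ the $b=\infty$ instance of \eqref{Equation: first inequality on K}. Since $t\ge a$ on the range of integration, $a^{d+3}\,|K(a)|\le \int_{(a,\infty)}t^{d+3}\,\mathrm{d}H(t)$, and the right-hand side is the tail of the finite integral $\int_0^\infty t^{d+3}\,\mathrm{d}H(t)$ of Assumption \ref{hyp:K}, hence $o(1)$ as $a\to\infty$; this is the first claim. For the second, I would insert the same bound into $\int_b^\infty K(a)a^{d+1}\,\mathrm{d}a$, estimate it by $\int_b^\infty a^{d+1}\big(\int_{(a,\infty)}\mathrm{d}H(t)\big)\,\mathrm{d}a$, and apply Tonelli's theorem to rewrite the domain $\{b\le a<t\}$ as $\{t>b,\ b\le a<t\}$, obtaining $\int_{(b,\infty)}\frac{t^{d+2}-b^{d+2}}{d+2}\,\mathrm{d}H(t)\le \frac{1}{d+2}\int_{(b,\infty)}t^{d+2}\,\mathrm{d}H(t)$. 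Multiplying by $b$ and using $b\le t$ on the domain gives $b\int_b^\infty |K(a)|a^{d+1}\,\mathrm{d}a\le \frac{1}{d+2}\int_{(b,\infty)}t^{d+3}\,\mathrm{d}H(t)\to 0$, which is exactly $\int_b^\infty K(a)a^{d+1}\,\mathrm{d}a=o(1/b)$.

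The computations here are routine; the only delicate point is the passage to $b=\infty$ in \eqref{Equation: first inequality on K} — one must check that $K$ genuinely has a limit at infinity so that the convention $K(\infty)=0$ is meaningful and usable, and justify the limit interchange — but this is immediate from $H$ being a finite non-decreasing function satisfying the integrability \eqref{Assumption: A1}. I therefore do not anticipate any real obstacle.
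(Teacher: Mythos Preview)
Your proof is correct and follows essentially the same route as the paper: both bound $|K(a)|$ by $H(\infty)-H(a)$ via \eqref{Equation: first inequality on K}, and then control the tail integrals against $\int_{(b,\infty)}t^{d+3}\,\mathrm{d}H(t)$. The only cosmetic difference is that for the second estimate in \eqref{Equation: second inequality on K} the paper uses integration by parts where you use Tonelli, which yield the same identity; you also spell out the $b=\infty$ case of \eqref{Equation: first inequality on K} more carefully than the paper does.
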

\begin{proof}[Proof of Lemma \ref{Lemma: an inequality on bounded variation functions}]
Inequality \eqref{Equation: first inequality on K} comes directly from the definition of total variation. We note that:
\[b^{d+3}(H(\infty)- H(b)) \le \int_b^{\infty} a^{d+3} \mathrm{d}H(a).\]
Then, by Assumption \ref{hyp:K}, 
\[\lim_{b\rightarrow +\infty} b^{d+3}(H(\infty)- H(b))=0.\]
Then, as:
$$K(b)b^{d+3} \le b^{d+3}(H(\infty)- H(b)),$$
we have proven the first estimation in \eqref{Equation: second inequality on K}.

For the second estimation, we see that:
\begin{align*}
		(d+2)&\int_{b}^{\infty} bK(a)a^{d+1} \mathrm{d}a\le (d+2)\int_{b}^{\infty} b(H(\infty)-H(a))a^{d+1} \mathrm{d}a\\
		 =&-b^{d+3}(H(\infty)-H(b))+b\int_b^{\infty}a^{d+2}\mathrm{d}H(a)
		\\
		 \le& -b^{d+3}(H(\infty)-H(b))+\int_b^{\infty}a^{d+3}\mathrm{d}H(a).
\end{align*}

Therefore, we have the conclusion.
\end{proof}
	
\section{Proof of Lemma \ref{Lemma: Some auxiliary calculations.}}\label{app:proofLemma_aux}

	Thanks to the symmetry of the Euclidean norm $\| \cdot \|_2$, we observe that for any $i,j \in [\![1,d ]\!]$,
	$$\int_{B_{\mathbb{R}^d}(0,c)}  G(\|v\|_2)  v^iv^j \mathrm{d}v= \begin{cases}
		0 &\text{if } i \ne j,\\
		\frac{1}{d}\int_{B_{\mathbb{R}^d}(0,c)}  G(\|v\|_2)  \|v\|_2^2 \mathrm{d}v & \text{if } i = j.
	\end{cases}$$
	Thus, LHS of \eqref{Equation: first equation in the auxiliary lemma} is equal to:
	\begin{align*}
		=&	\left[\frac{1}{d}\int_{B_{\mathbb{R}^d}(0,c)}  G(\|v\|_2)  \|v\|_2^2 \mathrm{d}v \right]  \left[\sum_{i=1}^d \left\langle \nabla_{\mathbb{R}^m}f(x) , \frac{\partial k}{\partial x^i} (0)\right\rangle \left\langle \nabla_{\mathbb{R}^m}h(x) , \frac{\partial k}{\partial x^i} (0)\right\rangle\right]
		\\
		=& \left[\frac{1}{d}\int_{B_{\mathbb{R}^d}(0,c)}  G(\|v\|_2)  \|v\|_2^2 \mathrm{d}v \right]  \left[\sum_{i=1}^d \frac{\partial (f\circ k)}{\partial x^i}(0) \, \frac{\partial (h\circ k)}{\partial x^i}(0)\right]
		\\
		=& \left[ \frac{1}{d}\int_{B_{\mathbb{R}^d}(0,c)}  G(\|v\|_2)  \|v\|_2^2 \mathrm{d}v \right]\big\langle \nabla_{\mathbb{R}^d}( f\circ k )(0), \nabla_{\mathbb{R}^d}( h\circ k )(0)\big\rangle .
	\end{align*}
	Hence, we have  \eqref{Equation: first equation in the auxiliary lemma}.\\
	For \eqref{Equation: second equation in the auxiliary lemma}, for all $i$, thanks again to the symmetry of the Euclidean norm $\| \cdot \|_2$, we have 
	$$\int_{B_{\mathbb{R}^d}(0,c)}  G(\|v\|_2)  v^i \mathrm{d}v=0,$$
	Thus, LHS of  \eqref{Equation: second equation in the auxiliary lemma} is equal to
	\begin{align*}
	 =&\bigg[ \left\langle \nabla_{\mathbb{R}^m} f(x),\frac{1}{2}\sum_{i=1}^d \frac{\partial^2 k}{\partial x^i\partial x^i}(0) \right\rangle 
		+\frac{1}{2}\sum_{i=1}^d f''(x)\left(\frac{\partial k}{\partial x^i}(0),\frac{\partial k}{\partial x^i}(0) \right)\bigg]\times
		\\
		& \qquad \qquad  \qquad\qquad \times\left( \frac{1}{d}\int_{B_{\mathbb{R}^d}(0,c)}  G(\|v\|_2)  \|v\|_2^2 \mathrm{d}v \right).			
	\end{align*}
Besides, since $k(0)=x$,
\begin{align*}
	 &\left\langle \nabla_{\mathbb{R}^m} f(x),\sum_{i=1}^d \frac{\partial^2 k}{\partial x^i\partial x^i}(0) \right\rangle 
	+\sum_{i=1}^d f''(x)\left(\frac{\partial k}{\partial x^i}(0),\frac{\partial k}{\partial x^i}(0) \right)
	\\
	=&  \sum_{i=1}^d \left[ \sum_{j=1}^m \frac{\partial f}{\partial x^j}(x)\frac{\partial^2 k^j}{\partial x^i\partial x^i}(0)+\sum_{j,l=1}^m \frac{\partial^2 f}{ \partial x^j \partial x^l}(x) \frac{ \partial k^j}{\partial x^i}(0)\frac{ \partial k^l}{\partial x^i}(0) \right]
	\\
	=& \sum_{i=1}^d \left[ \sum_{j=1}^m \frac{\partial}{\partial x^i}\left( \frac{\partial f}{\partial x^j} \circ k \times  \frac{\partial k^j}{\partial x^i} \right) \Big|_0 \right]
	\\
	=&  \sum_{i=1}^d \frac{\partial^2 ( f \circ k)}{\partial x^i \partial x^i}(0)=\Delta_{\mathbb{R}^d} ( f \circ k)(0).
\end{align*}
This ends the proof of Lemma \ref{Lemma: Some auxiliary calculations.}.

\end{document}